\documentclass[a4paper,12pt,leqno]{amsart}

\usepackage{amsmath}
\usepackage{amsmath}
\usepackage{amsthm}
\usepackage{amsfonts}
\usepackage{bm}
\usepackage{amssymb}
\usepackage{latexsym}
\usepackage{amsfonts}
\usepackage{graphicx}
\usepackage{mathrsfs}
\usepackage{amscd}
\usepackage{pifont}
\usepackage{enumerate}
\usepackage{color}
\usepackage{exscale}
\usepackage{paralist}
\usepackage{graphics}
\usepackage{epsfig}
\usepackage{graphicx}
\usepackage{epstopdf}
\usepackage{amsmath}
\usepackage{amssymb}
\usepackage{amsfonts}
\usepackage{graphicx}
\usepackage[colorlinks]{hyperref}
\renewcommand\eqref[1]{(\ref{#1})} %Need with hyperref

\usepackage[margin=2.5cm]{geometry}

\newtheorem{theorem}{Theorem}[section]

\newtheorem{lemma}[theorem]{Lemma}

\theoremstyle{definition}
\newtheorem{definition}[theorem]{Definition}

\newtheorem{remark}[theorem]{Remark}

\numberwithin{equation}{section}

\newcommand{\supp}{\operatorname{supp}}
\newcommand{\diam}{\operatorname{diam}}
\newcommand{\Dom}{\operatorname{Dom}}

%    Absolute value notation

%    Blank box placeholder for figures (to avoid requiring any
%    particular graphics capabilities for printing this document).

\begin{document}

\title[Besov and Triebel-Lizorkin spaces associated to operators]{Continuous
characterizations of inhomogeneous Besov and Triebel-Lizorkin spaces
 associated to non-negative self-adjoint operators}

\author[Q. Hong]{Qing Hong}
\address{Qing Hong
\endgraf
School of Mathematics and Statistics, Jiangxi Normal University
\endgraf
Nanchang, Jiangxi 330022, China}
\email{qhong@mail.bnu.edu.cn}

\author[G. Hu]{Guorong Hu$^\ast$}
\address{Guorong Hu
\endgraf
School of Mathematics and Statistics, Jiangxi Normal University
\endgraf
 Nanchang, Jiangxi 330022, China}
\email{hugr@mail.ustc.edu.cn}

\subjclass[2010]{Primary 46E35; Secondary 42B25, 42B35}

\date{\today}

\keywords{Besov spaces, Triebel-Lizorkin spaces, metric measure spaces, heat kernel, Littlewood-Paley function, Lusin function}

\thanks{This project is supported by the National Natural Science Foundation of China  (Grant Nos. 11901256 and
120012511) and the Natural Science Foundation of Jiangxi Province  (Grant Nos. 20192BAB211001 and 20202BAB211001)}
\thanks{$^\ast$Corresponding author. Email:hugr@mail.ustc.edu.cn}

\begin{abstract}
Let $(M, \rho, \mu)$ be a metric measure space satisfying the doubling, reverse doubling and noncollapsing conditions,
and let $\mathscr{L}$ be a nonnegative self-adjoint operator acting on $L^2 (M, d\mu)$ whose heat kernel satisfies
the small-time Gaussian upper bound, H\"{o}lder continuity and Markov property. In this paper,
we establish new characterizations of the ``classical'' and ``nonclassical'' Besov and Triebel-Lizorkin spaces
associated to $\mathscr{L}$ introduced by Kerkyacharian and Petrushev. More precisely, we obtain characterizations
of these spaces in terms of continuous Littlewood-Paley and Lusin functions associated to the heat semigroup generated
by $\mathscr{L}$, for complete range of indices. This extends related known results in the classical Euclidean setting
to our general setting, and extends corresponding results in [Trans. Amer. Math. Soc. \textbf{367} (2015), 121--189]
to complete range of indices.
\end{abstract}

\maketitle

\section{Introduction and statement of main results} \label{sec1}
\allowdisplaybreaks

In the last two decades the study of function spaces associated to operators attracted significant attention.
This direction of study was initiated by Auscher et al. \cite{ADM}, who introduced the Hardy space
$H^{1}_{L}(\mathbb{R}^{n})$ associated to an operator $L$ with pointwise heat kernel bound.
Later Duong and Yan \cite{DY1, DY2} introduced BMO space associated to
operators and investigated the duality between $H^{1}_{L}(\mathbb{R}^{n})$ and ${BMO}_{L^{\ast}}
(\mathbb{R}^{n})$, where $L^{\ast}$ is the adjoint of $L$ in $L^{2}(\mathbb{R}^{n})$. For Hardy spaces
associated to operators without pointwise heat kernel bound, we refer to the works of Auscher et al. \cite{AMR}
and Hofmann and Mayboroda \cite{HM}, in which Hardy spaces associated to the Hodge
Laplacian on Riemannian manifolds, and Hardy spaces associated to second order divergence form elliptic operators on $\mathbb{R}^{n}$
with complex coefficients, were developed respectively. Motivated by these two works,
Hofmann {et al.} \cite{HLMMY} further established the theory of the Hardy spaces $H^{p}_{{L}}(X)$, $1 \leq p <\infty$, on a
metric measure space $(X, d, \mu)$ associated to a general nonnegative self-adjoint operator ${L}$
satisfying Davies-Gaffney estimates. For further developments concerning Hardy spaces associated to operators,
we refer to \cite{BD, DKKP, DL, DLY, JY1, MP, SY, Yan, YY},  among many others.

It is well known that Besov and Triebel-Lizorkin spaces form a unifying class of function spaces encompassing many well-studied
classical function spaces such as Lebesgue spaces $L^{p}$, Hardy spaces $H^{p}$, the space BMO,
Sobolev spaces, Hardy-Sobolev spaces and various forms of Lipschitz spaces. While the classical theory of
these spaces on $\mathbb{R}^{n}$ was developed primarily by Peetre, Triebel, Frazier, Jawerth and many other authors
(see for instance \cite{FJ1, FJ2, FJ3, P, T1, T2}), there have been many efforts of extending it to
other domains and nonclassical settings.
In particular, Kerkyacharian and Petrushev \cite{KP} recently developed Besov and Triebel-Lizorkin spaces associated
to nonnegative self-adjoint operators. To recall the definition of these spaces, let us fix the setting.
Assume that $(M,\rho, \mu)$ is a metric measure space, locally compact with respect to
the topology induced by the distance $\rho(\cdot, \cdot)$, satisfying the following conditions.

\textbf{(H1)} \textit{Doubling condition}: There exists a constant $c_{0} >1$ such that
\begin{align} \label{D}
0 < \mu(B(x,2r))\leq c_{0} \mu(B(x,r)) < \infty
\end{align}
for all $x \in M$ and $r >0$, where $B(x,r):= \{y \in M: \rho(x,y) <r\}$ is the open ball centered at $x$ of radius $r$.

\textbf{(H2)} \textit{Reverse doubling condition}: There exists a constant $c_1 > 1$ such that
\begin{align*}
\mu(B(x,2r)) \geq c_{1}\mu(B(x,r))
\end{align*}
for all $x \in M$ and $0< r \leq \frac{\diam M}{3}$.

\textbf{(H3)} \textit{Non-collapsing condition}:  There exists a constant $c_2 > 0$ such that
\begin{align*}
\inf_{x\in M}\mu(B(x, 1)) \geq c_{2}.
\end{align*}

Assume further that the geometry of $(M, \rho, \mu)$ is related to a nonnegative self-adjoint operator $\mathscr{L}$
acting on $L^{2}(M,d\mu)$, mapping real-valued functions to
real-valued functions, such that the associated semigroup $P_{t} = e^{-t\mathscr{L}}$ consists of
integral operators with (heat) kernel $p(t, x, y)$ obeying the following conditions.

\textbf{(H4)} {\it Small time Gaussian upper bound}: There exist constants $C, c>0$ such that
\begin{align} \label{56}
|p(t,x,y)| &\leq \frac{C}{\sqrt{\mu\big(B(x,\sqrt{t})\big)\mu\big(B(y,\sqrt{t})\big)}}
\exp \left\{-\frac{c\rho^{2}(x,y)}{t} \right\}
\end{align}
for $x, y \in M$ and $0 < t \leq 1$.

\textbf{(H5)} \textit{H\"{o}lder continuity}: There exist constants $C, c, \alpha >0$ such that
\begin{equation} \label{57}
\begin{split}
&|p(t,x,y)-p(t, x,y')| \\
&\quad \leq C\left( \frac{\rho(y,y')}{\sqrt{t}} \right)^{\alpha}
\frac{1}{\sqrt{\mu\big(B(x,\sqrt{t})\big)\mu\big(B(y,\sqrt{t})\big)}}\exp \left\{-\frac{c\rho^{2}(x,y)}{t} \right\}
\end{split}
\end{equation}
for $x,y,y' \in M$ and $0 <t \leq 1$, whenever $\rho(y,y') \leq \sqrt{t}$.

\textbf{(H6)} \textit{Markov property}:
\begin{align*}
\int_{M}p(t, x,y)d\mu(y) = 1
\end{align*}
for all $x \in M$ and $t >0$.

In what follows, we always assume that $(M, \rho, \mu)$ is a metric measure space, locally compact with respect to
the topology induced by the distance $\rho(\cdot, \cdot)$, satisfying \textbf{(H1), (H2), (H3)},
and $\mathscr{L}$ is a nonnegative self-adjoint operator on $L^{2}(M, d\mu)$ with heat kernel satisfying
\textbf{(H4), (H5), (H6)}.  This setting is quite general and covers a wide range of situations,
including Lie groups of polynomial volume growth and complete Riemannian manifolds with Ricci curvature bounded
from below and satisfying the volume doubling
condition. In particular, it covers the classical Euclidean setting. See \cite{CKP} and
\cite{KP} for more details.

We now recall the definition of Besov and Triebel-Lizorkin spaces associated to $\mathscr{L}$, introduced by
Kerkyacharian and Petrushev \cite{KP}. Denote by $\mathcal{S}_{\mathscr{L}}'$ the class of distributions associated to
$\mathscr{L}$, which was introduced in \cite{KP} and  will be recalled in Section \ref{sec2} below.
\begin{definition} \label{df1}
Let $\varphi_{0}, \varphi \in C^{\infty}_{0}(\mathbb{R})$ be even functions such that
\begin{align} \label{908}
\supp \varphi_{0} \subset \{\lambda \in \mathbb{R}: |\lambda| \leq 2\} ~\mbox{ and }~  |\varphi_{0}(\lambda)|
\geq c >0  \mbox{ for } |\lambda| \leq 2^{3/4}
\end{align}
and
\begin{align} \label{909}
\supp \varphi \subset \{\lambda \in \mathbb{R}: 1/2 \leq |\lambda| \leq 2\} ~\mbox{ and }~  |\varphi(\lambda)|
\geq c >0 \mbox{ for } 2^{-3/4} \leq |\lambda| \leq 2^{3/4}.
\end{align}

(i) Let $s \in \mathbb{R}$, $0<p\leq \infty$ and $0 <q \leq \infty$. The ``classical'' Besov
space $B^{s}_{p,q}(\mathscr{L})$
is defined as the collection of all $f \in \mathcal{S}_{\mathscr{L}}'$ such that
\begin{align*}
\|f\|_{B^{s}_{p,q}(\mathscr{L})}:= \|\varphi_{0}(\sqrt{\mathscr{L}})f\|_{L^{p}}+
\left( \sum_{j =1}^{\infty} 2^{jsq} \big\|\varphi(2^{-j}\sqrt{\mathscr{L}})f\big\|_{L^{p}}^{q}   \right)^{1/q} <\infty,
\end{align*}
with the usual modification when $q = \infty$. See Section \ref{sec2} for the definition of $\varphi_{0}(\sqrt{\mathscr{L}})f$
and $\varphi(2^{-j}\sqrt{\mathscr{L}})f$.
The ``nonclassical'' Besov space $\widetilde{B}^{s}_{p,q}(\mathscr{L})$
is defined as the collection of all $f \in \mathcal{S}_{\mathscr{L}}'$ such that
\begin{align*}
\|f\|_{\widetilde{B}^{s}_{p,q}(\mathscr{L})} &:=\big\||B(\cdot, 1)|^{-s/n}\varphi_{0}(\sqrt{\mathscr{L}})
f(\cdot)\big\|_{L^{p}} \\
& \quad\quad  + \left( \sum_{j =1}^{\infty} \big\| |B(\cdot, 2^{-j})|^{-s/n}\varphi
(2^{-j}\sqrt{\mathscr{L}})f(\cdot)\big\|_{L^{p}}^{q}
  \right)^{1/q} <\infty.
\end{align*}
Here, $n$ is the ``dimension'' of $(M, \rho, \mu)$ as in \eqref{D2} below.

(ii) Let $s \in \mathbb{R}$, $0 <p < \infty$ and $0<q \leq \infty$. The ``classical'' Triebel-Lizorkin
space $F^{s}_{p,q}(\mathscr{L})$ is defined as the collection of all $f \in \mathcal{S}_{\mathscr{L}}'$ such that
\begin{align*}
\|f\|_{F^{s}_{p,q}(\mathscr{L})} :=\|\varphi_{0}(\sqrt{\mathscr{L}})f\|_{L^{p}}+
 \left\|\left(\sum_{j =1}^{\infty} 2^{jsq} \big|\varphi(2^{-j}\sqrt{\mathscr{L}})f\big|^{q}\right)^{1/q}\right\|_{L^{p}}
< \infty.
\end{align*}
The ``nonclassical'' Triebel-Lizorkin space $\widetilde{F}^{s}_{p,q}(\mathscr{L})$ is defined as the
collection of all $f \in \mathcal{S}_{\mathscr{L}}'$ such that
\begin{align*}
\|f\|_{\widetilde{F}^{s}_{p,q}(\mathscr{L})} &:=\big\||B(\cdot, 1)|^{-s/n}\varphi_{0}(\sqrt{\mathscr{L}})
f(\cdot)\big\|_{L^{p}} \\
&\quad\quad  + \left\|\left(\sum_{j =1}^{\infty} |B(\cdot, 2^{-j})|^{-sq /n}
 \big|\varphi(2^{-j}\sqrt{\mathscr{L}})f(\cdot)\big|^{q}\right)^{1/q}\right\|_{L^{p}} < \infty.
\end{align*}
\end{definition}

As pointed out in \cite{KP}, the main motivation for introducing the ``nonclassical''
Besov and Triebel-Lizorkin spaces associated to $\mathscr{L}$ lies in nonlinear approximation.
These ``nonclassical'' spaces seem more suitable for the possibly anisotropic nature of the geometry of $(M, \rho,
\mu)$.

Homogeneous Besov and Triebel-Lizorkin spaces associated to nonnegative self-adjoint spaces have been
introduced and studied by Georgiadis et al. \cite{GKKP, GKKP2}, and their weighted extension were studied by
Bui et al. \cite{BBD}. It is worth noting that in \cite{BBD} neither the H\"{o}lder continuity nor
the Markov property for the heat kernel of the operator is assumed.

The main purpose of the present paper is to derive characterizations of the inhomogeneous spaces $B^{s}_{p,q}(\mathscr{L})$, $\widetilde{B}^{s}_{p,q}(\mathscr{L})$,
 $F^{s}_{p,q}(\mathscr{L})$ and $\widetilde{F}^{s}_{p,q}(\mathscr{L})$ in terms of continuous Littlewood-Paley and Lusin
functions associated to the heat semigroup generated by $\mathscr{L}$.
Characterization of the classical Besov and Triebel-Lizorkin spaces on $\mathbb{R}^{n}$ via the continuous
Littlewood-Paley function associated to the heat semigroup were studied by Flett \cite{F}, Peetre \cite{P},
Triebel \cite{T3}, Bui {et al.} \cite{Bui2, Bui3, Bui4}, and many other authors. Kerkyacharian and Petrushev \cite{KP} proved
such a characterization for $B^{s}_{p,q}(\mathscr{L})$, $\widetilde{B}^{s}_{p,q}(\mathscr{L})$,
 $F^{s}_{p,q}(\mathscr{L})$ and $\widetilde{F}^{s}_{p,q}(\mathscr{L})$, but with the restriction
$p \geq 1$ in the Besov case, and with the restriction $p>1$ and $q>1$ in the Triebel-Lizorkin
case (see \cite[Theorems 6.7 and 7.5]{KP}). In \cite{LYY}, Liu et al. obtained the heat semigroup
characterization of more general scales of functions/distributions  associated to $\mathscr{L}$,
called Besov-type and Triebel-Lizorkin-type spaces associated to $\mathscr{L}$, for complete range of indices.
However, the heat semigroup characterization for complete range of indices in \cite{LYY} is a discrete version,
rather than a continuous one (see \cite[Theorem 5.7]{LYY}). The continuous version of heat semigroup
characterization obtained there still needs the restriction $p \geq 1$ (see \cite[Theorem 5.8]{LYY}).
Only recently, Bui et al. \cite{BBD} proved continuous characterizations of
(weighted) Besov and Triebel-Lizorkin spaces associated to $\mathscr{L}$ for complete range of indices in
terms of Littlewood-Paley and Lusin functions.
Compared with \cite{BBD}, the novelty of our work lies in the following aspects. First,
we treat both ``classical'' and ``nonclassical'' spaces, while \cite{BBD} only treated ``classical'' ones.
Obviously the ``nonclassical'' spaces are harder to handle than the ``classical'' ones. Second, in our work we focus on the inhomogeneous spaces, while
\cite{BBD} only treated the homogeneous spaces. The continuous characterizations of inhomogeneous spaces are more difficult to prove.
This is because in many parts of the argument, the inhomogeneous term need to be treated separately.

The main results of the present paper are the following two theorems.

\begin{theorem} \label{heat}
Suppose $s \in \mathbb{R}$ and $0<q \leq \infty$. Let $m$ be the smallest integer
such that $m > \max\{s/2, 0\}$. Then the following statements are true.

{\rm (i)} If $0< p \leq \infty$, then for $f \in \mathcal{S}_{\mathscr{L}}'$, we have the quasi-norm equivalence:
\begin{align} \label{hqy00}
\|f\|_{B^{s}_{p,q}(\mathscr{L})} &\sim  \|e^{-\mathscr{L}}f\|_{L^{p}} +
\left( \int_{0}^{1} t^{-sq/2}\big\|(t\mathscr{L})^{m}e^{-t\mathscr{L}}f\big\|^{q}_{L^{p}}  \frac{dt}{t}\right)^{1/q}
\end{align}
and
\begin{equation} \label{hqy1}
\begin{split}
 \|f\|_{\widetilde{B}^{s}_{p,q}(\mathscr{L})} &\sim \big\||B(\cdot, 1)|^{-s/n}e^{-\mathscr{L}}f(\cdot)\big\|_{L^{p}} \\
&\quad + \left(\int_{0}^{1} \big\| |B(\cdot, t^{1/2})|^{-s/n}(t\mathscr{L})^{m}e^{-t\mathscr{L}}f(\cdot)\big\|_{L^{p}}^{q}
 \frac{dt}{t}\right)^{1/q}.
\end{split}
\end{equation}

{\rm (ii)}
If $0< p < \infty$, then for $f \in \mathcal{S}_{\mathscr{L}}'$, we have the quasi-norm equivalence:
\begin{align} \label{hqy000}
\|f\|_{F^{s}_{p,q}(\mathscr{L})} &\sim \|e^{-\mathscr{L}}f\|_{L^{p}} +
\left\|\left(\int_{0}^{1} t^{-sq/2} \big|(t\mathscr{L})^{m}e^{-t\mathscr{L}}f\big|^{q} \frac{dt}{t}\right)^{1/q} \right\|_{L^{p}}
\end{align}
and
\begin{equation} \label{hqy2}
\begin{split}
 \|f\|_{\widetilde{F}^{s}_{p,q}(\mathscr{L})} &\sim \big\||B(\cdot, 1)|^{-s/n}e^{-\mathscr{L}}f(\cdot)\big\|_{L^{p}} \\
 &\quad + \left\|\left(\int_{0}^{1}|B(\cdot, t^{1/2})|^{-sq/n}
 \big|(t\mathscr{L})^{m}e^{-t\mathscr{L}}f(\cdot)\big|^{q} \frac{dt}{t}\right)^{1/q} \right\|_{L^{p}}.
\end{split}
\end{equation}
\end{theorem}

\begin{theorem} \label{area}
Suppose $s \in \mathbb{R}$, $0< p < \infty$ and $0<q \leq \infty$.  Let $m$
be the smallest integer such that $m > \max\{s/2, 0\}$.
Then for $f \in \mathcal{S}_{\mathscr{L}}'$, we have the quasi-norm equivalence:
\begin{equation} \label{lusin1}
\begin{split}
\|f\|_{F^{s}_{p,q}(\mathscr{L})} &\sim \|e^{-\mathscr{L}}f\|_{L^{p}} \\
&\quad + \left\|\left(\iint_{\Gamma^{loc}(\cdot)}t^{-sq/2} \big|(t\mathscr{L})^{m}e^{-t\mathscr{L}}f(y)
\big|^{q}\frac{d\mu(y)}{|B(\cdot, t^{1/2})|}\frac{dt}{t}\right)^{1/q} \right\|_{L^{p}}
\end{split}
\end{equation}
and
{\small \begin{equation} \label{lusin2}
\begin{split}
 \|f\|_{\widetilde{F}^{s}_{p,q}(\mathscr{L})} &\sim \big\||B(\cdot, 1)|^{-s/n}e^{-\mathscr{L}}f(\cdot)\big\|_{L^{p}} \\
&\quad + \left\|\left(\iint_{\Gamma^{loc}(\cdot)}|B(y, t^{1/2})|^{-sq/n}
 \big|(t\mathscr{L})^{m}e^{-t\mathscr{L}}f(y)\big|^{q} \frac{d\mu(y)}{|B(\cdot, t^{1/2})|}\frac{dt}{t}\right)^{1/q} \right\|_{L^{p}},
\end{split}
\end{equation}
where} $\Gamma^{loc}(x):=\{(y,t) \in M \times (0,1] :  \rho(x,y) <t^{1/2}\}$.
\end{theorem}
A few words about our proofs are in order.  We point out that the approach in \cite{KP} to derive continuous characterizations
in terms of square functions can not be used to treat the case $p <1$.
To achieve our goal, we shall adapt the ideas developed by Bui {et al.} \cite{Bui2,Bui3,Bui4},
Rychkov \cite{Rychkov} and Ullrich \cite{U}.
%See also \cite{LSUYY} and \cite{KSU} for the adaption of such
%ideas to more general scales of functions/distributions.
A central estimate of our approach to derive
continuous characterizations of Besov and Triebel-Lizorkin spaces associated to operators
is a ``sub-mean value inequality'' involving $t$ (see Lemma \ref{moui} below), which generalizes the
corresponding inequality on $\mathbb{R}^{n}$.
We also need to make good use of the smooth functional calculus developed by Kerkyacharian and Petrushev in \cite{KP}
and the ``off-diagonal estimates'' proved in Section \ref{sec3}.
It is worth mentioning that
the continuous characterizations of inhomogeneous spaces do not follow directly from the approach for
the homogeneous spaces in \cite{BBD}. Indeed, in many situations it is not a trivial matter to handle the inhomogeneous term.
For example, to prove the sub-mean value inequality for the inhomogeneous term,
a separate nontrivial argument is needed  (see the proof of Lemma \ref{moui}). Moreover, note the in the continuous norms of inhomogeneous spaces, the inhomogeneous term
is independent of $t$. Thus, to prove that the discrete norms can be bounded by the continuous norms, we
need to construct a Calder\'{o}n reproducing identity in which the inhomogeneous term is independent of $t$ (see \eqref{nontrivial} in the proof of Lemma~\ref{Lemma2}).
Finally, we emphasize that our proof of continuous Lusin characterization of ``nonclassical'' inhomogeneous Triebel-Lizorkin spaces (Theorem \ref{area}) is significantly
different from the method used in \cite{BBD}.

It is worth pointing out that Besov and Triebel-Lizorkin spaces associated to some particular operators were earlier
studied by some authors. For instance, Besov and Triebel-Lizorkin spaces in the context of Hermite were studied by
Petrushev and Xu \cite{PX} and Bui and Duong \cite{BD0}, while these spaces in the context of Laguerre were studied
by Kerkyacharian {et al.} \cite{KPX} and Bui and Duong \cite{BD}. The spaces in \cite{BD0} and \cite{BD} were introduced
via continuous Littlewood-Paley functions associated to the heat semigroup (or Poisson semigroup) and, in some
restricted cases (e.g., $q =2$), their Lusin function characterization was obtained.

\medskip
The organization of this paper is as follows. In Section \ref{sec2}, we give some notions and preliminary
results which will be needed in the proofs of our main results. In Section \ref{sec3} we present off-diagonal estimates,
which could be regarded as refinements of the previously known ones. Section \ref{sec4} and Section \ref{sec5}
are devoted to the proofs of Theorem \ref{heat} and Theorem \ref{area}, respectively.

\medskip
\textit{Notation.} Throughout this article we shall use the notation $|E| := \mu(E)$ for any measurable set $E \subset M$.
The set of all nonnegative integers is denoted by $\mathbb{N}_{0}$, while the set of all strictly positive integers
is denoted by $\mathbb{N}$. For any positive number $\alpha$, we denote by $\lfloor \alpha \rfloor$
the largest integer less than or equal to $\alpha$. We shall also use the notation $L^{p}: =L^{p}(M, d\mu)$.
In some cases ``sup'' will mean ``ess sup'', which will be clear from the context.
We will use $c,C,c',C'$ to denote positive constants, which are independent of the main
variables involved and whose values may vary at every occurrence.
By writing $f \lesssim g$ or $g \gtrsim f$, we mean $f \leq Cg$.
The notation $f \sim g$ will stand for $C \leq f/g \leq C'$.

\section{Preliminaries and notation} \label{sec2}

We start by noting that the doubling condition \eqref{D} implies the following strong homogeneity property:
there exists $C>0$ such that
\begin{equation} \label{D2}
|B(x,\lambda r)| \leq C \lambda^{n}|B(x,r)|
\end{equation}
for all $x \in M$, $r >0$ and $\lambda \geq 1$,
where $n$ is a constant playing the role of a dimension, though it is not even an integer.
There also exit $C$ and $n'$, $0 \leq n' \leq n$, so that
\begin{equation} \label{iuy}
|B(x,r)| \leq C \left(1+\frac{\rho(x,y)}{r} \right)^{n'}|B(y,r)|
\end{equation}
uniformly for all $x, y \in M$ and $r >0$. Indeed, property \eqref{iuy} with $n'=n$
is a direct consequence of the triangle inequality for the metric $\rho$ and the strong homogeneity property \eqref{D2}.
In the case of the Euclidean space $\mathbb{R}^{n}$ and Lie groups of polynomial growth, $n'$ can be chosen to be $0$.

Using the doubling condition \eqref{D}, it is easy to show (cf. \cite[Lemma 2.3]{CKP})
that for any $\sigma >n$, there exists a constant
$C$ such that for all $x\in M$ and $t>0$,
\begin{equation}\label{inte}
\int_{M} \left(1+\frac{\rho(x,y)}{t}\right)^{-\sigma} d\mu(y) \leq C|B(x,t)|.
\end{equation}

To save space we shall use the following abbreviation borrowed from \cite{KP}:
\begin{equation*}
D_{t, \sigma} (x,y): = \big(|B(x,t)||B(y,t)| \big)^{-1/2} \left( 1+\frac{\rho(x,y)}{t} \right)^{-\sigma}
\end{equation*}
for $t, \sigma >0$ and $x, y \in M$.
Combining \eqref{iuy} and \eqref{inte} we see that for any $\sigma > n+ n'/2$, there is a constant $C$
(depending on $\sigma$) such that
\begin{equation} \label{ges}
\int_{M}D_{t,\sigma}(x,y)d\mu(y) \leq C
\end{equation}
uniformly for all $t \in (0, \infty)$ and $x \in M$.

The following lemma is standard and thus we skip the proof.
\begin{lemma} \label{maxxx}
Suppose $\sigma >n + n'$. Then there exists a constant $C>0$ such that for all locally integrable functions $f$ on $M$,
$t >0$ and $x \in M$,
\begin{align*}
\int_{M}\frac{|f(y)|}{|B(y,t)|(1 + t^{-1}\rho(x,y))^{\sigma}}d\mu(y) \leq C \mathcal{M}(f)(x).
\end{align*}
Here $\mathcal{M}$ is the Hardy-Littlewood maximal operator on $(M,\rho, \mu)$ defined by
\begin{equation*}
\mathcal{M}(f)(x):= \sup_{B \ni x}\frac{1}{\mu(B)} \int_{B}|f(y)|d\mu(y),
\end{equation*}
where $B$ ranges over all balls containing $x$.
\end{lemma}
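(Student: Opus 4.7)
The plan is a routine dyadic annular decomposition around the point $x$. I would split $M$ into $A_0 := B(x,t)$ and $A_k := B(x, 2^k t) \setminus B(x, 2^{k-1}t)$ for $k \geq 1$, write the integral as $\sum_{k \geq 0} \int_{A_k}$, and show each term is bounded by $2^{-k\varepsilon} \mathcal{M}(f)(x)$ for some $\varepsilon > 0$ depending on $\sigma - (n+n')$.

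For the central piece $k = 0$, I would note that on $B(x,t)$ one has $\rho(x,y) \leq t$, so the decay factor is harmless, and applying \eqref{iuy} with radius $t$ gives $|B(x,t)| \lesssim |B(y,t)|$, hence $|B(y,t)|^{-1} \lesssim |B(x,t)|^{-1}$. Thus the integral over $A_0$ is at most a constant multiple of $|B(x,t)|^{-1} \int_{B(x,t)} |f| \, d\mu \leq \mathcal{M}(f)(x)$.

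For $k \geq 1$ and $y \in A_k$, I would assemble three ingredients: the decay factor $(1 + \rho(x,y)/t)^{-\sigma} \lesssim 2^{-k\sigma}$; the ball comparison $|B(y,t)|^{-1} \lesssim (1 + \rho(x,y)/t)^{n'} |B(x,t)|^{-1} \lesssim 2^{kn'} |B(x,t)|^{-1}$, which comes from \eqref{iuy} with $r=t$; and the doubling estimate $|B(x,t)|^{-1} \lesssim 2^{kn} |B(x, 2^k t)|^{-1}$, which is an immediate consequence of \eqref{D2}. Multiplying them, integrating $|f|$ over $A_k \subset B(x, 2^k t)$, and recognizing a local average of $|f|$ gives the $k$-th piece a bound of $2^{k(n + n' - \sigma)} \mathcal{M}(f)(x)$.

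Summing the geometric series in $2^{n + n' - \sigma}$ completes the argument; convergence is equivalent to the hypothesis $\sigma > n + n'$, which is exactly the assumption of the lemma. There is no real obstacle: the only point worth flagging is bookkeeping of the two independent polynomial losses --- $n'$ from the non-symmetry of volumes in \eqref{iuy} and $n$ from doubling --- which combine additively and fix the sharp threshold $\sigma > n + n'$.
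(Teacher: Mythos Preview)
Your argument is correct and is exactly the standard dyadic annular decomposition one expects here. The paper itself does not give a proof of this lemma, stating only that it ``is standard and thus we skip the proof''; your write-up fills in precisely the routine computation the authors omitted, with the correct bookkeeping of the two polynomial losses ($n$ from doubling \eqref{D2} and $n'$ from \eqref{iuy}) that together determine the threshold $\sigma > n+n'$.
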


It is well known that the Fefferman-Stein vector-valued maximal inequality also holds on metric measure spaces satisfying
doubling condition (see, e.g., \cite{GLY}). It is stated as follows.
\begin{lemma} \label{fsv}
Suppose that $1 <p <\infty$ and $1 < q \leq \infty$. Then there
exists a constant $C>0$ such that for all sequences $\{f_{j}\}$ of locally integrable functions on $M$,
\begin{equation*}
\left\|\left(\sum_{j}| \mathcal{M}(f_{j})|^{q}\right)^{1/q} \right\|_{L^{p}} \leq C \left\|\left(\sum_{j}
| f_{j}|^{q}\right)^{1/q} \right\|_{L^{p}} .
\end{equation*}
\end{lemma}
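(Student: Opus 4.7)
The plan is to prove the inequality by combining classical real-variable machinery with a vector-valued Calder\'on--Zygmund decomposition, adapted to the doubling metric setting, and to handle the range $p>q$ by a duality argument.

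First, I dispose of the easy scalar and endpoint cases. From the doubling condition \eqref{D} alone, a standard $5r$-covering (Vitali-type) argument gives the weak $(1,1)$ bound $\mu\{\mathcal{M}f>\lambda\}\leq C\lambda^{-1}\|f\|_{L^1}$; together with the trivial $L^{\infty}$ bound and Marcinkiewicz interpolation, this yields scalar $L^p$-boundedness of $\mathcal{M}$ for every $1<p\leq\infty$. The case $q=\infty$ follows from the pointwise dominance $\sup_j \mathcal{M}(f_j)\leq \mathcal{M}(\sup_j|f_j|)$ plus scalar boundedness, and the diagonal case $p=q$ follows from scalar boundedness applied coordinate-wise and Fubini.

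The main step covers $1<p\leq q<\infty$. I treat the sublinear operator $T\colon\{f_j\}\mapsto\{\mathcal{M}(f_j)\}$ and establish the vector-valued weak type $(1,1)$ inequality
\[
\mu\bigl\{x: \|TF(x)\|_{\ell^{q}}>\lambda\bigr\}\leq \frac{C}{\lambda}\int_{M}\|F(y)\|_{\ell^{q}}\,d\mu(y),
\]
where $F=\{f_j\}$ and $\|F(x)\|_{\ell^{q}}:=\bigl(\sum_j|f_j(x)|^{q}\bigr)^{1/q}$. Perform a Calder\'on--Zygmund decomposition of the scalar function $h:=\|F(\cdot)\|_{\ell^q}$ at level $\lambda$, producing a disjoint collection of balls $\{B_k\}$ with $\sum_k\mu(B_k)\lesssim \lambda^{-1}\|h\|_{L^1}$, $h\leq C\lambda$ off $\bigcup_k B_k$, and $\mu(B_k)^{-1}\int_{B_k}h\leq C\lambda$. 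Write $F=G+B$ coordinate-wise in the usual way so that $\|G(x)\|_{\ell^q}\leq C\lambda$ a.e.\ (Minkowski in $\ell^q$, which uses $q\geq 1$, bounds the averaged good part) and $B$ is supported on $\bigcup_kB_k$ with vanishing mean on each $B_k$ in every coordinate. The good part is controlled by Chebyshev plus the diagonal $L^q(\ell^q)$ bound. For the bad part, a standard use of the doubling condition shows that for $x$ outside a fixed enlargement $B^{*}:=\bigcup_k \tilde{B}_k$ one has the pointwise estimate $\mathcal{M}(b_{k,j})(x)\lesssim \mu(B_k)^{-1}\int_{B_k}|f_{j}|$ times a localization factor; Minkowski in $\ell^q$ then turns these scalar estimates into the required bound on $\|TB\|_{\ell^q}$ off $B^{*}$, while $\mu(B^{*})\lesssim \sum_k\mu(B_k)\lesssim \lambda^{-1}\|F\|_{L^{1}(\ell^q)}$ by doubling. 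Interpolating (vector-valued Marcinkiewicz) between this weak $(1,1)$ and the strong diagonal $L^{q}(\ell^q)$ bound covers all $1<p\leq q$.

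For the remaining range $q<p<\infty$, use duality combined with Stein's weighted estimate
\[
\int_{M}\mathcal{M}(f)(x)^{q}\,g(x)\,d\mu(x)\leq C_{q}\int_{M}|f(x)|^{q}\,\mathcal{M}(g)(x)\,d\mu(x),\qquad g\geq 0,\ q>1,
\]
which on doubling spaces is a consequence of the scalar weak $(1,1)$ inequality via the good-$\lambda$ method. Raising the desired inequality to the $q$-th power and dualizing $L^{p/q}$ against $L^{(p/q)'}$ (here $p/q>1$), I apply the weighted estimate termwise, swap sum and integral, and use H\"older together with scalar boundedness of $\mathcal{M}$ on $L^{(p/q)'}$ (valid since $(p/q)'>1$) to finish.

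The main obstacle is the vector-valued weak $(1,1)$ estimate: one must verify that the off-$B^{*}$ bound on $\mathcal{M}(b_{k,j})$ is uniform enough in $j$ to survive the $\ell^{q}$-summation, and that the localization provided by the doubling condition \eqref{D} (together with \eqref{inte}) controls the resulting integrals uniformly in the decomposition. Everything else in the argument is formal once this step and Stein's weighted inequality are in hand.
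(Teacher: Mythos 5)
The paper does not prove this lemma at all: it simply cites Grafakos--Liu--Yang \cite{GLY} for the Fefferman--Stein inequality on spaces of homogeneous type, so there is no in-paper argument to compare against. Judged on its own, your architecture is the standard one (scalar bounds, the diagonal case $p=q$, a vector-valued weak $(1,1)$ estimate plus Marcinkiewicz interpolation for $p\le q$, and duality with the weighted inequality $\int(\mathcal{M}f)^q g\le C\int|f|^q\mathcal{M}g$ for $p>q$), and the $q=\infty$, $p=q$, and $p>q$ steps are fine. The problem is the bad part of the vector-valued weak $(1,1)$ estimate, which as you describe it does not work.

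Two specific issues. First, the mean-zero normalization of $b_{k,j}$ buys you nothing here: $\mathcal{M}(b)=\mathcal{M}(|b|)$, so cancellation cannot improve the decay of $\mathcal{M}(b_{k,j})$ off $B_k$ (unlike the singular-integral case you are pattern-matching to). Second, and more seriously, the size estimate $\mathcal{M}(b_{k,j})(x)\lesssim \mu(B(y_k,\rho(x,y_k)))^{-1}\int_{B_k}|b_{k,j}|$ for $x\notin \tilde B_k$, followed by Minkowski in $j$, gives $\|\{\mathcal{M}b_{k,j}(x)\}_j\|_{\ell^q}\lesssim \lambda\,\mathcal{M}(\chi_{B_k})(x)$, and you then need $\mu\{x\notin B^{*}:\sum_k\mathcal{M}(\chi_{B_k})(x)>c\}\lesssim\sum_k\mu(B_k)$. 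This is false: weak-$L^1$ quasinorms are not additive, and already on $\mathbb{R}$ with $N$ disjoint intervals of length $1/\log N$ centered at $1,2,\dots,N$ the sum $\sum_k\mathcal{M}(\chi_{B_k})$ exceeds a fixed constant on a set of measure $\sim N$ while $\sum_k\mu(B_k)\sim N/\log N$; the logarithmic loss is genuine. The correct classical repair is different: for $x\notin B^{*}$ one has $\mathcal{M}(b_{k,j})(x)\le C\,\mathcal{M}\bigl(\bigl(\mu(B_k)^{-1}\int_{B_k}|b_{k,j}|\bigr)\chi_{B_k}\bigr)(x)$, so off $B^{*}$ the bad part is dominated by $\mathcal{M}$ applied to the averaged family $\varphi_j:=\sum_k\bigl(\mu(B_k)^{-1}\int_{B_k}|b_{k,j}|\bigr)\chi_{B_k}$; by Minkowski this family satisfies $\|\{\varphi_j(y)\}_j\|_{\ell^q}\le C\lambda\chi_{\cup_kB_k}(y)$ pointwise, hence lies in $L^q(\ell^q)$ with norm$^q\lesssim\lambda^{q-1}\|F\|_{L^1(\ell^q)}$, and the already-established diagonal $L^q(\ell^q)$ bound plus Chebyshev finishes the weak $(1,1)$ estimate. (Alternatively, one can avoid the weak-type endpoint altogether by extrapolating from the $A_q$-weighted boundedness of $\mathcal{M}$, which also holds on doubling metric measure spaces.) With that step replaced, your proof goes through; as written, it has a real gap.
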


The Besov and Triebel-Lizorkin spaces introduced in \cite{KP} are in general spaces of distributions.
So let us recall from \cite{KP} the notions of test functions and distributions on $M$ associated to $\mathscr{L}$.
\begin{definition}
\textrm{(i)}  If $\mu(M)< \infty$, the test function space
$\mathcal{S}_{\mathscr{L}}$ is defined as the collection of all functions $f \in \bigcap_{k \in \mathbb{N}_{0}}\Dom(\mathscr{L}^{k})$ with
topology induced by the family of seminorms
\begin{align*}
\mathcal{P}_{k}(f) := \|\mathscr{L}^{k}f\|_{L^{2}}, \quad k \in \mathbb{N}_{0}.
\end{align*}

\textrm{(ii)} If $\mu(M) = \infty$,
the test function space $\mathcal{S}_{\mathscr{L}}$ is defined as the
collection of all functions $f \in \bigcap_{k \in \mathbb{N}_{0}}\Dom(\mathscr{L}^{k})$ such that
\begin{equation} \label{semin}
\mathcal{P}_{k,\ell}(f) := \sup_{x \in M} (1+\rho(x,x_{0}))^{\ell} |\mathscr{L}^{k}f(x)|< \infty \quad
\mbox{for all } k, \ell \in \mathbb{N}_{0},
\end{equation}
where $x_{0} \in M$ is a fixed point. In this case $\mathcal{S}_{\mathscr{L}}$ is endowed with
the topology induced by the family $\{\mathcal{P}_{k,\ell}\}_{k, \ell \in \mathbb{N}_{0}}$ of seminorms.
\end{definition}

In either case, $\mathcal{S}_{\mathscr{L}}$ is a Fr\'{e}chet space (see \cite[Section 5]{KP}). Moreover,
in the case where $\mu(M) = \infty$, a different choice of $x_{0}$ in the above definition yields the same
space $\mathcal{S}_{\mathscr{L}}$ with equivalent topology. Thus, we fix the point $x_{0} \in M$ once and for
all.

The space $\mathcal{S}_{\mathscr{L}}'$ of distributions associated to $\mathscr{L}$ is
defined as the space of all continuous linear functionals on $\mathcal{S}_{\mathscr{L}}$.
The duality between the spaces is denoted by the map $$(\cdot,\cdot): \mathcal{S}_{\mathscr{L}}' \times \mathcal{S}_{\mathscr{L}}
\rightarrow \mathbb{C}.$$

Given a bounded Borel measurable function $\phi$ on $[0, \infty)$, one can define the
operator $\phi(\sqrt{\mathscr{L}})$ by the spectral theorem, according to the prescription
\begin{equation*}
\phi(\sqrt{\mathscr{L}}) = \int_{0}^{\infty} \phi(\sqrt{\lambda})dE_{\lambda},
\end{equation*}
where $dE_{\lambda}$ is the projection valued measure associated to $\mathscr{L}$.
Note that if $\varphi, \psi$ are two bounded measurable functions on $[0, \infty)$,
and $\phi:= \varphi \psi$, then
\begin{align*}
\varphi (\sqrt{\mathscr{L}}) \psi(\sqrt{\mathscr{L}}) =  \psi(\sqrt{\mathscr{L}})  \varphi(\sqrt{\mathscr{L}}) = \phi (\sqrt{\mathscr{L}}).
\end{align*}
Moreover, if $\varphi(\sqrt{\mathscr{L}})$ and $\psi(\sqrt{\mathscr{L}})$ are integral operators with kernels $K_{\varphi(\sqrt{\mathscr{L}})}(x,y)$
and $K_{\psi(\sqrt{\mathscr{L}})}(x,y)$ respectively, then $\varphi(\sqrt{\mathscr{L}}) \psi(\sqrt{\mathscr{L}})$ is also an integral operator whose
kernel is given by
\begin{align} \label{compokernel}
K_{\varphi(\sqrt{\mathscr{L}}) \psi(\sqrt{\mathscr{L}})} (x,y) = \int_M K_{\varphi(\sqrt{\mathscr{L}})}(x,z) K_{\psi(\sqrt{\mathscr{L}})}(z,y)d\mu(z).
\end{align}

Given $N \in \mathbb{N}$ and $\phi \in C^{N}(\mathbb{R})$, we introduce the seminorm
\begin{equation} \label{swce}
\|\phi\|_{(N)} :=\sup_{\lambda \in \mathbb{R}, 0<\nu \leq N} (1 + |\lambda|)^{N+n+1}\big|\phi^{(\nu)}(\lambda)\big|,
\end{equation}
where $\phi^{(\nu)}$ is the $\nu$-th order derivative of $\phi$.

The following result concerning smooth functional calculus plays an important role in our approach.
This was developed by Kerkyacharian and Petrushev \cite{KP}.
\begin{lemma} \label{AOE1} {\rm (\cite[Theorem 3.4]{KP})}
Let $N \in \mathbb{N}$ and $N \geq n+1$. Suppose $\phi \in C^{N}(\mathbb{R})$ is an even function such that $\|\phi\|_{(N)} < \infty$.
Then for any $t >0$, $\phi(t \sqrt{\mathscr{L}})$ is an integral operator, and its integral
kernel $K_{\phi (t\sqrt{L})}(x,y)$ satisfies
\begin{align*}
\big|K_{\phi(t\sqrt{\mathscr{L}})}(x,y)\big| \leq C\|\phi\|_{(N)}D_{t, N}(x,y),
\end{align*}
where $C>0$ is a constant depending on $N$ and the constants $c_{0}, C^{\star}, c^{\star}$
from \eqref{D}--\eqref{57}.
\end{lemma}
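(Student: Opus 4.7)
The plan is to prove this smooth-functional-calculus kernel estimate by relating $\phi(t\sqrt{\mathscr{L}})$ to operators whose kernels can be controlled pointwise via the Gaussian bound \eqref{56}. The abstract spectral integral does not yield pointwise information directly, so one exploits the evenness of $\phi$ and the finite propagation speed of the associated wave operator.

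First I would record two consequences of \eqref{56}: the Davies--Gaffney off-diagonal $L^{2}$ estimate for $e^{-s\mathscr{L}}$, and, through the Phragm\'en--Lindel\"of principle, the finite propagation speed of the wave operator---the integral kernel of $\cos(\xi\sqrt{\mathscr{L}})$ is supported in $\{(x,y):\rho(x,y)\le|\xi|\}$. Since $\phi$ is even and $C^{N}$ with $\|\phi\|_{(N)}<\infty$, Fourier inversion together with $N$-fold integration by parts produces the decay $|\widehat{\phi}(\xi)|\lesssim\|\phi\|_{(N)}(1+|\xi|)^{-N-n-1}$, where the extra $n+1$ exponent is precisely the surplus built into the seminorm \eqref{swce}. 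By the spectral theorem and a change of variables,
\begin{equation*}
\phi(t\sqrt{\mathscr{L}})=\frac{1}{\pi t}\int_{\mathbb{R}}\widehat{\phi}(\xi/t)\cos(\xi\sqrt{\mathscr{L}})\,d\xi,
\end{equation*}
and each $\xi$-integrand operator has kernel supported in $\rho(x,y)\le|\xi|$.

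Next I would convert the combination of $L^{2}$-boundedness and the support condition on $\cos(\xi\sqrt{\mathscr{L}})$ into a pointwise kernel bound. The most direct mechanism is a dyadic decomposition of $\phi$: write $\phi=\sum_{j\ge 0}\phi_{j}$ with $\phi_{j}$ supported in $|\lambda|\sim 2^{j}$, factor $\phi_{j}(\sqrt{\mathscr{L}})=e^{-c2^{-2j}\mathscr{L}}\bigl[\phi_{j}(\sqrt{\mathscr{L}})e^{c2^{-2j}\mathscr{L}}\bigr]$, where the bracketed operator has a bounded symbol (since on $|\lambda|\sim 2^{j}$ the factor $e^{c2^{-2j}\lambda^{2}}$ is of order $1$), and apply \eqref{56} to the heat-kernel factor to obtain a pointwise Gaussian bound. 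Summing the dyadic pieces against $\|\phi\|_{(N)}(1+2^{j})^{-N-n-1}$ and tracking scales via the doubling property \eqref{D} then yields the claimed estimate $|K_{\phi(t\sqrt{\mathscr{L}})}(x,y)|\le C\|\phi\|_{(N)}D_{t,N}(x,y)$.

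The main obstacle will be two-fold. First, converting the Gaussian decay $\exp(-c\rho(x,y)^{2}/s)$ from the heat kernel into polynomial decay $(1+\rho(x,y)/t)^{-N}$ inside $D_{t,N}(x,y)$ requires balancing the heat-kernel scale $\sqrt{s}\sim 2^{-j}$ against the multiplier scale $t$, and summing polynomial bounds across dyadic scales $j$ so that the final decay exponent is exactly $N$; this forces us to use all of the seminorm \eqref{swce}, in particular the extra $n+1$ exponent. Second, converting the volume factors $|B(x,\sqrt{s})|$ at heat-kernel scale to $|B(x,t)|$ at multiplier scale via doubling introduces further powers of $(1+\rho(x,y)/t)^{n'}$ that must also be absorbed. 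A related technicality is that \eqref{56} is assumed only for $0<s\le 1$; this is handled by treating the small-time regime first and using the Markov and semigroup properties to extend to all $t>0$.
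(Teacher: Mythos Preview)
The paper does not prove this lemma; it is quoted from \cite[Theorem~3.4]{KP} and used as a black box, so there is no in-paper argument to compare against. Your sketch is effectively an outline of the \cite{KP} proof, and the ingredients you list---Gaussian bound $\Rightarrow$ Davies--Gaffney $\Rightarrow$ finite propagation speed for $\cos(\xi\sqrt{\mathscr{L}})$, together with the cosine-transform representation of $\phi(t\sqrt{\mathscr{L}})$---are indeed the ones used there and in \cite{CKP}.

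Two points need tightening. First, $N$-fold integration by parts against the seminorm \eqref{swce} gives only $|\widehat{\phi}(\xi)|\lesssim\|\phi\|_{(N)}(1+|\xi|)^{-N}$, not the exponent $-(N+n+1)$ you claim; the surplus $n+1$ in \eqref{swce} is spent not on Fourier decay but on the volume-factor mismatches you flag in your last paragraph. Second, and more substantively, the factorisation $\phi_{j}(\sqrt{\mathscr{L}})=e^{-c2^{-2j}\mathscr{L}}\bigl[\phi_{j}(\sqrt{\mathscr{L}})e^{c2^{-2j}\mathscr{L}}\bigr]$ does not by itself deliver a \emph{pointwise} kernel bound with off-diagonal decay: composing a heat kernel with a merely $L^{2}$-bounded operator controls only $\|K(x,\cdot)\|_{L^{2}}$, and even a two-sided heat sandwich yields just the on-diagonal size $\bigl(|B(x,\sqrt{s})||B(y,\sqrt{s})|\bigr)^{-1/2}$ with no $(1+\rho(x,y)/t)^{-N}$ factor. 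In \cite{KP} the off-diagonal decay comes precisely from the finite-propagation-speed localisation you set up in your first paragraph and then set aside: one splits the $\xi$-integral at $|\xi|\sim\rho(x,y)/t$, the near part vanishes by the support property of the cosine kernel, and the tail supplies the polynomial decay via $|\widehat{\phi}(\xi)|\lesssim|\xi|^{-N}$. The dyadic decomposition is useful for handling non-compactly-supported $\phi$, but it has to be run alongside the wave-propagation argument, not in place of it.
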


\begin{remark}
\textrm{This lemma implies that if $\phi$ is an even Schwartz function on $\mathbb{R}$ and $t >0$,
then both $K_{\phi(t\sqrt{\mathscr{L}})}(\cdot, y)$ and $K_{\phi(t\sqrt{\mathscr{L}})}(x, \cdot)$ belong
to the test function class $\mathcal{S}_{\mathscr{L}}$. Thus, for $f \in \mathcal{S}_{\mathscr{L}}'$,
it is natural to define
\begin{equation*}
\phi(\sqrt{\mathscr{L}})f(x) : = \big(f, K_{\phi(\sqrt{\mathscr{L}})}(x,\cdot) \big), \quad x \in M,
\end{equation*}
which plays a role similar to the convolution of a test function and a distribution in the Euclidean space $\mathbb{R}^n$.}
\end{remark}

Given $a>0$, $\gamma \in \mathbb{R}$, $t >0$, $f \in \mathcal{S}_{\mathscr{L}}'$ and an even function
$\phi \in \mathcal{S}(\mathbb{R})$, we introduce the Peetre type maximal functions:
\begin{align*}
\big[\phi(t\sqrt{\mathscr{L}})\big]^{\ast}_{a}f(x)&:= \sup_{y \in M} \frac{|\phi(t\sqrt{\mathscr{L}})f(y)|}{(1+t^{-1}\rho(x,y))^{a}},\\
\big[\phi(t\sqrt{\mathscr{L}})\big]^{\ast}_{a, \gamma}f(x)&:= \sup_{y \in M} \frac{|B(y,t)|^{\gamma}|\phi(t\sqrt{\mathscr{L}})f(y)|}{(1+t^{-1}\rho(x,y))^{a}}.
\end{align*}
Observe that $\big[\phi(t\sqrt{\mathscr{L}})\big]^{\ast}_{a}f(x)=\big[\phi(t\sqrt{\mathscr{L}})\big]^{\ast}_{a,0}f(x)$.

The following lemma follows from the Peetre type inequality (cf. \cite[Lemma 6.4]{KP}), Hardy-Littlewood maximal inequality
and Fefferman-Stein vector-valued maximal inequality (Lemma \ref{fsv} above).
See also the proofs of Proposition~6.3 and Proposition~7.2 in \cite{KP}.
\begin{lemma} \label{ptre}
Let $\varphi_{0}, \varphi \in C^{\infty}_{0}(\mathbb{R})$ be even functions satisfying \eqref{908} and \eqref{909}.

\begin{itemize}
\item[\rm (i)]  For $s\in \mathbb{R}$, $0<p\leq \infty$, $0 < q \leq \infty$, $a > \frac{n}{p}$ and $f \in \mathcal{S}_{\mathscr{L}}'$, we have
\begin{align*}
&\quad \left\|\big[\varphi_{0}(\sqrt{\mathscr{L}}) \big]^{\ast}_{a}f\right\|_{L^{p}} + \left(\sum_{j=1}^{\infty}2^{jsq}\left\| \big[\varphi(2^{-j}\sqrt{\mathscr{L}}) \big]^{\ast}_{a}f\right\|_{L^{p}}^{q} \right)^{1/q}
\leq C \|f\|_{B^{s}_{p,q}(\mathscr{L})},\\
&\left\|\big[\varphi_{0}(\sqrt{\mathscr{L}}) \big]^{\ast}_{a, -s/n}f\right\|_{L^{p}}+
\left( \sum_{j=1}^{\infty}\left\| \big[\varphi(2^{-j}\sqrt{\mathscr{L}}) \big]^{\ast}_{a, -s/n}f\right\|_{L^{p}}^{q} \right)^{1/q}
\leq C \|f\|_{\widetilde{B}^{s}_{p,q}(\mathscr{L})}.
\end{align*}

\item[\rm (ii)] For $s\in \mathbb{R}$, $0<p<\infty$, $0 < q \leq \infty$, $a > \frac{n}{\min\{p,q\}}$
and $f \in \mathcal{S}_{\mathscr{L}}'$, we have
\begin{align*}
& \quad \left\|\big[\varphi_{0}(\sqrt{\mathscr{L}}) \big]^{\ast}_{a}f\right\|_{L^{p}}
+ \left\|\left( \sum_{j=1}^{\infty}2^{jsq}\left| \big[\varphi(2^{-j}\sqrt{\mathscr{L}}) \big]^{\ast}_{a}f\right|^{q}
\right)^{1/q}\right\|_{L^{p}} \leq C \|f\|_{F^{s}_{p,q}(\mathscr{L})}, \\
&\left\|\big[\varphi_{0}(\sqrt{\mathscr{L}}) \big]^{\ast}_{a, -s/n}f\right\|_{L^{p}}+\left\|\left( \sum_{j=1}^{\infty}
\left| \big[\varphi(2^{-j}\sqrt{\mathscr{L}}) \big]^{\ast}_{a, -s/n}f\right|^{q} \right)^{1/q}\right\|_{L^{p}}
\leq C \|f\|_{\widetilde{F}^{s}_{p,q}(\mathscr{L})}.
\end{align*}
\end{itemize}
\end{lemma}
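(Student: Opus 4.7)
The proof is essentially a distillation of the arguments for Propositions 6.3 and 7.2 in \cite{KP}, and my plan follows the standard Peetre-maximal-function scheme adapted to the operator setting.

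The starting point is a pointwise sub-mean-value estimate of Peetre type, which is precisely \cite[Lemma 6.4]{KP}: for any $r>0$ with $a>n/r$ and for each $j\in\mathbb{N}_{0}$,
\begin{equation*}
\bigl[\varphi(2^{-j}\sqrt{\mathscr{L}})\bigr]^{\ast}_{a}f(x) \;\leq\; C\,\mathcal{M}\!\bigl(|\varphi(2^{-j}\sqrt{\mathscr{L}})f|^{r}\bigr)(x)^{1/r},
\end{equation*}
and similarly with $\varphi_{0}$ in place of $\varphi$. Under the assumption $a>n/p$ (resp.\ $a>n/\min\{p,q\}$) one can choose $r$ with $n/a<r<p$ (resp.\ $n/a<r<\min\{p,q\}$), which makes $p/r>1$ (resp.\ also $q/r>1$) available for the maximal inequalities.

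For part (i), raise the Peetre bound to the $p$-th power, integrate over $M$, and use the $L^{p/r}(M,d\mu)$-boundedness of $\mathcal{M}$ to obtain
\begin{equation*}
\bigl\|\bigl[\varphi(2^{-j}\sqrt{\mathscr{L}})\bigr]^{\ast}_{a}f\bigr\|_{L^{p}}
\;\leq\; C\,\bigl\|\varphi(2^{-j}\sqrt{\mathscr{L}})f\bigr\|_{L^{p}},
\end{equation*}
with an analogous inequality for the $\varphi_{0}$-term. Multiplying by $2^{jsq}$, summing in $j$, and taking the $q$-th root yields the Besov estimate in (i). For part (ii), the pointwise Peetre bound gives
\begin{equation*}
\Bigl(\sum_{j\geq 1} 2^{jsq}\bigl|\bigl[\varphi(2^{-j}\sqrt{\mathscr{L}})\bigr]^{\ast}_{a}f\bigr|^{q}\Bigr)^{1/q}
\;\leq\; C\Bigl(\sum_{j\geq 1}\mathcal{M}\!\bigl(|2^{js}\varphi(2^{-j}\sqrt{\mathscr{L}})f|^{r}\bigr)^{q/r}\Bigr)^{1/q};
\end{equation*}
the Fefferman--Stein inequality (Lemma~\ref{fsv}) applied with exponents $p/r>1$ and $q/r>1$ (after pulling $1/r$ out of the quasi-norm) then bounds this by the $F^{s}_{p,q}(\mathscr{L})$-quasi-norm of $f$.

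For the ``non-classical'' statements one has to accommodate the extra weight $|B(y,2^{-j})|^{-s/n}$ appearing in $[\varphi(2^{-j}\sqrt{\mathscr{L}})]^{\ast}_{a,-s/n}f$. Using doubling \eqref{D2} and its reverse companion \eqref{iuy}, one compares $|B(y,2^{-j})|$ with $|B(x,2^{-j})|$ at the cost of a factor $(1+2^{j}\rho(x,y))^{C|s|}$, which can be absorbed into the denominator $(1+2^{j}\rho(x,y))^{a}$ provided $a$ is taken slightly larger (the strictness in $a>n/p$ and $a>n/\min\{p,q\}$ leaves room). After this absorption the weighted Peetre inequality takes the form
\begin{equation*}
\bigl[\varphi(2^{-j}\sqrt{\mathscr{L}})\bigr]^{\ast}_{a,-s/n}f(x) \;\leq\; C\,\mathcal{M}\!\bigl(\bigl||B(\cdot,2^{-j})|^{-s/n}\varphi(2^{-j}\sqrt{\mathscr{L}})f\bigr|^{r}\bigr)(x)^{1/r},
\end{equation*}
and one concludes exactly as in the classical case, by $L^{p/r}$-boundedness of $\mathcal{M}$ (for $\widetilde{B}^{s}_{p,q}(\mathscr{L})$) or by Fefferman--Stein (for $\widetilde{F}^{s}_{p,q}(\mathscr{L})$).

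The main obstacle I expect is the careful bookkeeping in the last step: one must verify that the room in the strict inequalities $a>n/p$ and $a>n/\min\{p,q\}$ really suffices to absorb the polynomial distortion coming from $|B(y,2^{-j})|^{-s/n}/|B(x,2^{-j})|^{-s/n}$ uniformly in $j$, and to ensure that the resulting $r$ still lies below $\min\{p,q\}$. Everything else is a routine combination of Lemma~\ref{maxxx}, the Hardy--Littlewood maximal theorem and Lemma~\ref{fsv}.
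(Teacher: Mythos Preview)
Your plan is exactly the one the paper indicates: invoke the Peetre-type inequality of \cite[Lemma~6.4]{KP} and feed it into the scalar Hardy--Littlewood maximal theorem for the Besov case and into the Fefferman--Stein inequality (Lemma~\ref{fsv}) for the Triebel--Lizorkin case, following \cite[Propositions~6.3 and 7.2]{KP}. For the classical spaces this is complete and matches the paper.

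For the non-classical spaces, however, your proposed mechanism for handling the weight has a real gap. You suggest comparing $|B(y,2^{-j})|^{-s/n}$ with $|B(x,2^{-j})|^{-s/n}$ via \eqref{iuy}, paying a factor $(1+2^{j}\rho(x,y))^{|s|n'/n}$, and then absorbing this into the Peetre denominator $(1+2^{j}\rho(x,y))^{a}$. That effectively lowers $a$ to $a-|s|n'/n$, so you would afterwards need $a-|s|n'/n>n/r$ for some $r<p$. The strict inequality $a>n/p$ does \emph{not} give you this room once $|s|$ is large: with $a=n/p+\varepsilon$ and $\varepsilon<|s|n'/n$ the argument collapses. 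The fix is not to spend part of $a$, but to go one level deeper. The Peetre inequality itself is derived from a sub-mean-value estimate (compare the proof of Lemma~\ref{moui} in this paper, or the proof of \cite[Lemma~6.4]{KP}) that carries a \emph{free} decay exponent $N$ which may be chosen as large as one likes. Insert the weight at that stage and absorb the distortion $(1+2^{j}\rho(y,z))^{|s|n'/n}$ into $N$ rather than into $a$; after taking the supremum over $y$ one arrives at an integral of the form handled by Lemma~\ref{maxxx}, and the stated hypothesis on $a$ survives intact. This is precisely how \cite[Propositions~6.3 and 7.2]{KP} proceed.
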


We will need the following fundamental lemma from \cite{Rychkov}.
\begin{lemma} \label{sfga}
{\rm (\cite[Lemma 2]{Rychkov})} Let $0<p,q\leq \infty$ and $\delta >0$.
Let $\{g_{j}\}_{j = 0}^{\infty}$ be a sequence of nonnegative
measurable function on $M$ and put
\begin{equation*}
G_{\ell}(x) =\sum_{j=0}^{\infty}2^{-|j-\ell|\delta}g_{j}(x), \quad x \in M,
 ~\ell\in \mathbb{N}_{0}.
\end{equation*}
Then, there is a constant $C$ depending only on $p,q,\delta$ such that
\begin{align*}
\big\|\{ G_{\ell}\}_{\ell=0}^{\infty}\big\|_{\ell^{q}(L^{p})}
&\leq C\big\| \{ g_{j}\}_{j=0}^{\infty}\big\|_{\ell^{q}(L^{p})}
\end{align*}
and
\begin{align*}
\big\|\{ G_{\ell}\}_{\ell=0}^{\infty}\big\|_{L^{p}(\ell^{q})}
&\leq C\big\| \{ g_{j}\}_{j=0}^{\infty}\big\|_{L^{p}(\ell^{q})}. % \label{55555}
\end{align*}
Here, the $\ell^{q}(L^{p})$ quasi-norm and $L^{p}(\ell^{q})$ quasi-norm are, respectively, given by
\begin{align*}
\big\|\{ h_{j}\}_{j=0}^{\infty}\big\|_{\ell^{q}(L^{p})}:= \left( \sum_{j=0}^{\infty} \|h_{j}\|_{L^{p}}^{q}
\right)^{1/q}\quad \mbox{and} \quad
\big\|\{ h_{j}\}_{j=0}^{\infty}\big\|_{L^{p}(\ell^{q})}:=\left\|\left(\sum_{j=0}^{\infty}|h_{j}|^{q}
\right)^{1/q} \right\|_{L^{p}},
\end{align*}
for any sequence $\{h_{j}\}_{j=0}^{\infty}$
of measurable functions on $M$.
\end{lemma}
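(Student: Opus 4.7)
The plan is to reduce both estimates to a single discrete convolution on $\mathbb{N}_{0}$ with the geometric kernel $c_{k}:=2^{-|k|\delta}$, and then exploit the fact that because $\delta>0$ the kernel lies in $\ell^{r}(\mathbb{Z})$ for every $r>0$. What makes the lemma nontrivial is only that $p$ and/or $q$ may drop below $1$; the needed substitute for Young's inequality in that range is the quasi-triangle inequality $\|\sum_{i}f_{i}\|_{X}^{r}\le\sum_{i}\|f_{i}\|_{X}^{r}$ that is valid in any $r$-normed quasi-Banach space with $r=\min(1,r_{0})$, $r_{0}$ being the exponent of the space.

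For the $\ell^{q}(L^{p})$ bound I would set $\tau:=\min(1,p)$ and apply the $\tau$-triangle inequality in $L^{p}$ to the defining sum of $G_{\ell}$. This gives
\begin{equation*}
\|G_{\ell}\|_{L^{p}}^{\tau}\le\sum_{j=0}^{\infty}2^{-\tau|\ell-j|\delta}\|g_{j}\|_{L^{p}}^{\tau}.
\end{equation*}
Writing $A_{\ell}:=\|G_{\ell}\|_{L^{p}}^{\tau}$, $B_{j}:=\|g_{j}\|_{L^{p}}^{\tau}$, and $\tilde q:=q/\tau$, the task becomes to prove $\|A\|_{\ell^{\tilde q}}\le C\|B\|_{\ell^{\tilde q}}$ for the convolution-type inequality $A_{\ell}\le\sum_{j}2^{-\tau|\ell-j|\delta}B_{j}$. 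If $\tilde q\ge 1$ this is Young's inequality on $\mathbb{Z}$ with the $\ell^{1}$ kernel $\{2^{-\tau|k|\delta}\}$. If $\tilde q<1$, I use the elementary subadditivity $(\sum_{j}a_{j})^{\tilde q}\le\sum_{j}a_{j}^{\tilde q}$ and swap summations:
\begin{equation*}
\sum_{\ell}A_{\ell}^{\tilde q}\le\sum_{j}B_{j}^{\tilde q}\sum_{\ell}2^{-\tilde q\tau|\ell-j|\delta}\le C\sum_{j}B_{j}^{\tilde q}.
\end{equation*}
Raising the resulting bound to the power $1/\tau$ recovers the advertised $\ell^{q}(L^{p})$ estimate.

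For the $L^{p}(\ell^{q})$ bound the order of integration is reversed, so I would do the quasi-triangle trick with $\tau':=\min(1,q)$ at the pointwise level before taking $L^{p}$. Fixing $x\in M$ and viewing $\{G_{\ell}(x)\}$ and $\{g_{j}(x)\}$ as numerical sequences, the same convolution argument as above — Young when $q\ge 1$, the $q$-subadditivity plus a swap of sums when $q<1$ — yields the pointwise bound
\begin{equation*}
\Bigl(\sum_{\ell=0}^{\infty}G_{\ell}(x)^{q}\Bigr)^{1/q}\le C\Bigl(\sum_{j=0}^{\infty}g_{j}(x)^{q}\Bigr)^{1/q}
\end{equation*}
with constant independent of $x$. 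Taking the $L^{p}$ norm in $x$ and invoking monotonicity produces the second estimate.

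There is no substantial obstacle here: the result is essentially Rychkov's, and once one recognises that the same $r$-triangle device handles both the function-level and sequence-level quasi-norms the proof is routine. The only item requiring minor vigilance is tracking which power the kernel is being raised to — $\tau\tilde q\delta$ in one estimate and $q\delta$ in the other — so that the shifted geometric series remains $\ell^{1}$-summable; the hypothesis $\delta>0$ secures this in every case.
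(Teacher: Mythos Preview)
The paper does not supply its own proof of this lemma: it is quoted verbatim from Rychkov \cite[Lemma~2]{Rychkov} and used as a black box. Your argument is correct and is essentially the standard proof one finds in Rychkov's paper and its descendants --- reduce to a discrete convolution with the geometric kernel $\{2^{-|k|\delta}\}$, handle $q\ge 1$ (or $\tilde q\ge 1$) by Young/Minkowski, and handle the sub-unit range via the $r$-subadditivity $(\sum a_j)^{r}\le\sum a_j^{r}$; the only thing to track is that the relevant exponent on $\delta$ stays positive so the shifted geometric series converges, which you have noted.
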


The following simple lemma will also be needed.
\begin{lemma} \label{osigg}
Let $0<p,q\leq \infty$ and $\delta >0$. Let $\{g_{j}\}_{j = 0}^{\infty}$ be a sequence of nonnegative
measurable function on $M$ and put
\begin{equation*}
f(x) =\sum_{j=0}^{\infty}2^{-j\delta}g_{j}(x), \quad x \in M.
\end{equation*}
Then, there is a constant $C$ depending only on $p,q,\delta$ such that
\begin{align*}
\|f\|_{L^{p}} \leq C \left\| \left\| \big\{g_{j}(\cdot) \big\}_{j=0}^{\infty}\right\|_{\ell^{q}}  \right\|_{L^{p}}
\end{align*}
\end{lemma}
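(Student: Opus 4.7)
The key observation is that the conclusion follows from a pointwise estimate $f(x) \leq C\,\|\{g_{j}(x)\}_{j}\|_{\ell^{q}}$, after which one simply takes the $L^{p}$ quasi-norm of both sides.

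The plan is to split into the two standard cases according to whether $q \geq 1$ or $q < 1$.

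\textbf{Case $1 \leq q \leq \infty$.} I would apply H\"{o}lder's inequality with conjugate exponent $q' = q/(q-1)$ (and the obvious modification for $q=\infty$) to the sum
\[
f(x) = \sum_{j=0}^{\infty} 2^{-j\delta} g_{j}(x) = \sum_{j=0}^{\infty} \bigl(2^{-j\delta}\bigr)\bigl(g_{j}(x)\bigr),
\]
yielding
\[
f(x) \leq \left(\sum_{j=0}^{\infty} 2^{-j\delta q'}\right)^{1/q'} \left(\sum_{j=0}^{\infty} g_{j}(x)^{q}\right)^{1/q} = C_{\delta,q}\, \|\{g_{j}(x)\}_{j}\|_{\ell^{q}},
\]
since the geometric series converges thanks to $\delta > 0$. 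For $q = \infty$ one simply bounds $g_{j}(x) \leq \sup_{k} g_{k}(x)$ and sums $\sum_{j} 2^{-j\delta}$.

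\textbf{Case $0 < q < 1$.} I would use the elementary inequality $(\sum_{j} a_{j})^{q} \leq \sum_{j} a_{j}^{q}$ valid for nonnegative $a_{j}$ when $0 < q \leq 1$. Applied to $a_{j} = 2^{-j\delta} g_{j}(x)$, this gives
\[
f(x)^{q} \leq \sum_{j=0}^{\infty} 2^{-j\delta q} g_{j}(x)^{q} \leq \sum_{j=0}^{\infty} g_{j}(x)^{q},
\]
so that $f(x) \leq \|\{g_{j}(x)\}_{j}\|_{\ell^{q}}$ pointwise.

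\textbf{Conclusion.} In either case one has a pointwise bound $f(x) \leq C \,\|\{g_{j}(x)\}_{j}\|_{\ell^{q}}$ with $C$ depending only on $\delta$ and $q$. Taking $L^{p}(M, d\mu)$ quasi-norms of both sides (for any $0 < p \leq \infty$, using only monotonicity, not any quasi-triangle inequality) yields the desired estimate. There is essentially no obstacle here: the lemma is a pointwise $\ell^{q}$-embedding combined with geometric summation, which is why it is stated as a ``simple lemma'' in the text.
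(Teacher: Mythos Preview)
Your proof is correct and follows essentially the same approach as the paper: split into the cases $q>1$ (H\"older's inequality with the convergent geometric series $\sum 2^{-j\delta q'}$) and $0<q\leq 1$ (the $q$-triangle inequality $(\sum a_j)^q \leq \sum a_j^q$), obtain a pointwise bound $f(x)\leq C\|\{g_j(x)\}\|_{\ell^q}$, and then take the $L^p$ quasi-norm. The only cosmetic difference is where you place the boundary case $q=1$, which of course works either way.
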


\begin{proof}
First assume $q >1$. By H\"{o}lder's inequality we have
\begin{align*}
|f(x)| \leq \left(\sum_{j=0}^{\infty}2^{-j\delta q'}\right)^{1/q'} \left(\sum_{j=0}^{\infty}
|g_{j}(x)|^{q}\right)^{1/q} \leq C \left\|\big\{g_{j}(\cdot) \big\}_{j=0}^{\infty}\right\|_{\ell^{q}} .
\end{align*}
Taking the $L^{p}$-quasi-norm on both sides yields the desired estimate.

If $0<q \leq 1$, we use the inequality $(\sum_j u_j)^q \leq \sum_j |u_j|^q$ to obtain
\begin{align*}
|f(x)| \leq \sum_{j=0}^{\infty}2^{-j\delta}|g_{j}(x)|
\leq \left(\sum_{j=0}^{\infty} 2^{-j \delta q}  |g_j (x)|^q   \right)^{1/q}
\leq \left(\sum_{j=0}^{\infty} |g_j (x)|^q   \right)^{1/q} .
\end{align*}
Taking the $L^{p}$-quasi-norm on both sides yields the desired estimate.
\end{proof}

Finally, we record a Calder\'{o}n type reproducing formula.
\begin{lemma} {\rm (\cite[Proposition 5.5]{KP})} \label{Calderon}
Suppose $\phi_{0}, \phi \in C^{\infty}_{0}(\mathbb{R})$ are even functions such that
$\supp \phi_{0} \in \{\lambda \in \mathbb{R}:|\lambda| \leq 2\}$, $\supp \phi \subset \{ \lambda \in \mathbb{R}:
1/2 \leq |\lambda| \leq 2\}$ and
\begin{align*}
\phi_{0} (\lambda) + \sum_{j =1}^{\infty} \phi(2^{-j} \lambda) =1
\end{align*}
for all $\lambda \in \mathbb{R}$. Then for all $f \in \mathcal{S}_\mathscr{L}'$, we have
\begin{align*}
f = \phi_{0}(\sqrt{\mathscr{L}})f +  \sum_{j =1}^{\infty} \phi(2^{-j}\sqrt{\mathscr{L}})f
\end{align*}
with convergence in the topology of $\mathcal{S}_{\mathscr{L}}'$.
\end{lemma}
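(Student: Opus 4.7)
The plan is to reduce, by duality, the convergence statement in $\mathcal{S}_{\mathscr{L}}'$ to the corresponding convergence statement in $\mathcal{S}_{\mathscr{L}}$ for test functions. Since $\phi_{0},\phi$ are real-valued and even and $\mathscr{L}$ is self-adjoint, the operators $\phi_{0}(\sqrt{\mathscr{L}})$ and $\phi(2^{-j}\sqrt{\mathscr{L}})$ are symmetric; by Lemma \ref{AOE1} (applied to suitable Schwartz extensions of $\phi_{0}$ and $\phi$) together with the remark following it, their integral kernels lie in $\mathcal{S}_{\mathscr{L}}$ in each variable, so these operators map $\mathcal{S}_{\mathscr{L}}\to\mathcal{S}_{\mathscr{L}}$ continuously and extend to $\mathcal{S}_{\mathscr{L}}'$ in a way compatible with the pairing $(\cdot,\cdot)$. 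Consequently, for every $f\in\mathcal{S}_{\mathscr{L}}'$ and $g\in\mathcal{S}_{\mathscr{L}}$,
\begin{align*}
\Bigl(\phi_{0}(\sqrt{\mathscr{L}})f+\sum_{j=1}^{N}\phi(2^{-j}\sqrt{\mathscr{L}})f,\,g\Bigr)
=\Bigl(f,\,\phi_{0}(\sqrt{\mathscr{L}})g+\sum_{j=1}^{N}\phi(2^{-j}\sqrt{\mathscr{L}})g\Bigr),
\end{align*}
and it will suffice to show $g_{N}:=\phi_{0}(\sqrt{\mathscr{L}})g+\sum_{j=1}^{N}\phi(2^{-j}\sqrt{\mathscr{L}})g\to g$ in the topology of $\mathcal{S}_{\mathscr{L}}$ for every $g\in\mathcal{S}_{\mathscr{L}}$.

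Convergence $g_{N}\to g$ in $L^{2}$ is immediate from the spectral theorem, since the identity $\phi_{0}(\lambda)+\sum_{j\geq 1}\phi(2^{-j}\lambda)\equiv 1$ reduces for each fixed $\lambda$ to a finite sum by virtue of $\supp\phi\subset\{1/2\leq|\lambda|\leq 2\}$. To promote this to convergence in every seminorm of $\mathcal{S}_{\mathscr{L}}$, the key device is the factorization
\begin{align*}
\phi(2^{-j}\sqrt{\mathscr{L}})=2^{-2jm}\eta_{m}(2^{-j}\sqrt{\mathscr{L}})\mathscr{L}^{m},\qquad
\eta_{m}(\mu):=\mu^{-2m}\phi(\mu)\in C_{0}^{\infty}(\mathbb{R}),
\end{align*}
valid for every integer $m\geq 1$ thanks to the support condition on $\phi$ and the spectral theorem. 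This manufactures a decay factor $2^{-2jm}$ at the price of replacing $g$ by $\mathscr{L}^{m}g$, which is harmless in view of the seminorm structure of $\mathcal{S}_{\mathscr{L}}$. When $\mu(M)<\infty$, the $L^{2}$-boundedness of $\eta_{m}(2^{-j}\sqrt{\mathscr{L}})$ by $\|\eta_{m}\|_{\infty}$ immediately gives $\|\mathscr{L}^{k}\phi(2^{-j}\sqrt{\mathscr{L}})g\|_{L^{2}}\lesssim 2^{-2jm}\|\mathscr{L}^{k+m}g\|_{L^{2}}$, which is summable for $m\geq 1$ and forces $\mathcal{P}_{k}(g-g_{N})\to 0$.

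When $\mu(M)=\infty$, I will apply Lemma \ref{AOE1} to $\eta_{m}$ to obtain the kernel bound $|K_{\eta_{m}(2^{-j}\sqrt{\mathscr{L}})}(x,y)|\lesssim D_{2^{-j},N}(x,y)$ for arbitrary $N$, and combine it with the pointwise estimate $|\mathscr{L}^{k+m}g(y)|\leq \mathcal{P}_{k+m,\ell'}(g)(1+\rho(y,x_{0}))^{-\ell'}$ furnished by the definition of $\mathcal{S}_{\mathscr{L}}$. The triangle inequality $(1+\rho(x,x_{0}))\leq (1+\rho(x,y))(1+\rho(y,x_{0}))$ trades the $x_{0}$-weight for an $(x,y)$-weight, and the inflation $(1+\rho(x,y))^{\ell}\leq (1+2^{j}\rho(x,y))^{\ell}$ (valid because $2^{-j}\leq 1$) absorbs the latter into the kernel decay by reducing its exponent from $-N$ to $-(N-\ell)$. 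For $N,\ell'$ large in terms of $\ell$ so that \eqref{ges} applies, this yields
\begin{align*}
(1+\rho(x,x_{0}))^{\ell}\,|\mathscr{L}^{k}\phi(2^{-j}\sqrt{\mathscr{L}})g(x)|\lesssim 2^{-2jm}\mathcal{P}_{k+m,\ell'}(g),
\end{align*}
so that $\mathcal{P}_{k,\ell}(g-g_{N})\to 0$ by geometric summation in $j$. The main obstacle is this last string of estimates: extracting the full decay factor $2^{-2jm}$ uniformly in $x$ while juggling three competing weights, namely the Peetre-type weight $(1+\rho(x,x_{0}))^{\ell}$, the kernel localization $(1+2^{j}\rho(x,y))^{-N}$, and the seminorm decay $(1+\rho(y,x_{0}))^{-\ell'}$. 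The crucial point is that $2^{-j}\leq 1$ aligns the metric scales favorably, allowing $(1+\rho(x,y))^{\ell}$ to be absorbed into the sharper localization at no cost in $j$; once this is in hand, the geometric decay in $m$ closes the argument.
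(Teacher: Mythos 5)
Your argument is correct: the paper itself gives no proof of this lemma, merely citing \cite[Proposition 5.5]{KP}, and your duality reduction to convergence of $g_N\to g$ in $\mathcal{S}_{\mathscr{L}}$, combined with the factorization $\phi(2^{-j}\sqrt{\mathscr{L}})=2^{-2jm}\eta_m(2^{-j}\sqrt{\mathscr{L}})\mathscr{L}^m$ and the kernel bounds of Lemma \ref{AOE1} together with \eqref{ges}, is precisely the standard argument used in that reference. No gaps of substance; the only points glossed over (justifying the interchange $\int (f,K(x,\cdot))g(x)\,d\mu(x)=(f,\phi(2^{-j}\sqrt{\mathscr{L}})g)$ and passing from the $L^2$ identity for $g-g_N$ to the pointwise one) are routine.
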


\section{Off-diagonal estimates} \label{sec3}
First, we establish the following fundamental estimate:
\begin{lemma} \label{owr}
For any $\sigma > n + n'$, there exists a constant $c>0$ such that for all
$t,s>0$ and all $x,y \in M$,
\begin{equation} \label{imsy}
\int_{M}D_{t,\sigma}(x,z)D_{s,\sigma}(z,y)d\mu(z) \leq c D_{t \vee s, \sigma-n-n'}(x,y),
\end{equation}
where $t \vee s : =\max\{t,s\}$.
\end{lemma}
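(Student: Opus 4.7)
The plan is to first establish the diagonal case $t=s$ directly via a Peetre-type splitting combined with Cauchy--Schwarz, and then reduce the general off-diagonal case to it by partitioning the domain of integration and comparing $D_{s,\sigma}(z,y)$ with $D_{t,\sigma}(z,y)$ on one piece. Since the integrand and the target bound are both invariant under the simultaneous swap $(x,t) \leftrightarrow (y,s)$, I may assume without loss of generality that $s \leq t$, so that $t \vee s = t$.

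For the diagonal case $t=s$, I would estimate the factor $|B(z,t)|^{-1}$ by the geometric mean of the two consequences $|B(z,t)|^{-1} \leq C(1+\rho(x,z)/t)^{n'}|B(x,t)|^{-1}$ and $|B(z,t)|^{-1} \leq C(1+\rho(z,y)/t)^{n'}|B(y,t)|^{-1}$ of \eqref{iuy}:
\begin{equation*}
|B(z,t)|^{-1} \leq C\left(1+\rho(x,z)/t\right)^{n'/2}\left(1+\rho(z,y)/t\right)^{n'/2}\left(|B(x,t)||B(y,t)|\right)^{-1/2}.
\end{equation*}
Substituting this into the product $D_{t,\sigma}(x,z)D_{t,\sigma}(z,y)$ reduces matters to estimating $\int(1+\rho(x,z)/t)^{-\sigma'}(1+\rho(z,y)/t)^{-\sigma'}d\mu(z)$ with $\sigma' = \sigma - n'/2$. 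I would then split $\sigma' = \tau_1 + \tau_2$ with $\tau_1 = \sigma - n - n'$ and $\tau_2 = n + n'/2$, apply the Peetre-type inequality $(1+\rho(x,z)/t)^{-\tau_1}(1+\rho(z,y)/t)^{-\tau_1} \leq (1+\rho(x,y)/t)^{-\tau_1}$ (from $(1+a)(1+b) \geq 1 + a + b \geq 1+\rho(x,y)/t$) to extract the desired $\rho(x,y)/t$-decay, and finish by Cauchy--Schwarz together with \eqref{inte} (valid since $2\tau_2 = 2n+n' > n$), yielding a residual bound of $C(|B(x,t)||B(y,t)|)^{1/2}$. Collecting factors gives precisely $CD_{t,\sigma-n-n'}(x,y)$.

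For the off-diagonal case $s < t$, I would decompose $M = U_1 \cup U_2$ with $U_1 := \{z : \rho(z,y) \leq t\}$ and $U_2 := M \setminus U_1$. On $U_2$ I claim the pointwise bound $D_{s,\sigma}(z,y) \leq CD_{t,\sigma}(z,y)$: by \eqref{D2}, the volume ratio $(|B(\cdot,t)|/|B(\cdot,s)|)^{1/2} \leq C(t/s)^{n/2}$ at both $z$ and $y$ contributes a cost of $C(t/s)^{n}$, but since $\rho(z,y) > t$, a direct computation shows $(1+\rho(z,y)/t)/(1+\rho(z,y)/s) \leq 2s/t$, saving $(2s/t)^\sigma$. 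The net factor is $C(s/t)^{\sigma-n}$, which is bounded because $\sigma > n+n' \geq n$. Consequently the $U_2$ part of the integral is controlled by $\int_M D_{t,\sigma}(x,z)D_{t,\sigma}(z,y)d\mu(z)$ and hence by the already-established diagonal case. On $U_1$, the condition $\rho(z,y) \leq t$ together with \eqref{iuy} gives $|B(z,t)| \sim |B(y,t)|$, so $D_{t,\sigma}(x,z) \leq C(|B(x,t)||B(y,t)|)^{-1/2}(1+\rho(x,z)/t)^{-\sigma}$ and the volume factor can be pulled out. The remaining integral $\int_{U_1}(1+\rho(x,z)/t)^{-\sigma}D_{s,\sigma}(z,y)d\mu(z)$ is then handled by distinguishing $\rho(x,y) \leq 2t$ (where I simply use $(1+\rho(x,z)/t)^{-\sigma} \leq 1$ together with $\int_M D_{s,\sigma}(z,y)d\mu(z) \leq C$ from \eqref{ges}) and $\rho(x,y) > 2t$ (where $\rho(x,z) \geq \rho(x,y)/2$ on $U_1$, so $(1+\rho(x,z)/t)^{-\sigma} \leq 2^\sigma(1+\rho(x,y)/t)^{-\sigma}$ can be pulled out before again invoking \eqref{ges}).

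The principal obstacle is the discrepancy between $|B(y,s)|$ and $|B(y,t)|$: the naive estimate $|B(y,s)|^{-1/2} \leq C(t/s)^{n/2}|B(y,t)|^{-1/2}$ introduces a factor $(t/s)^{n/2}$ that is unbounded as $s/t \to 0$. This is exactly why the splitting at the threshold $\rho(z,y) = t$ is needed: on $U_2$ the stronger decay of $D_{s,\sigma}(z,y)$ in $\rho(z,y)/s$ absorbs the loss, while on $U_1$ the doubling property replaces the $s$-scale volumes by the $t$-scale ones outright, so the parameter $s$ disappears from the volume bookkeeping.
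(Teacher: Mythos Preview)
Your proof is correct and takes a genuinely different route from the paper's. The paper works directly with the off-diagonal case $t \geq s$ and splits the integration domain according to proximity to $y$ relative to $\rho(x,y)$, namely $\Omega_1 = \{z : \rho(y,z) < \rho(x,y)/2\}$ and $\Omega_2 = M \setminus \Omega_1$; on $\Omega_1$ the factor $D_{t,\sigma}(x,z)$ is essentially constant $\approx D_{t,\sigma-n'/2}(x,y)$, while on $\Omega_2$ a further case distinction $\rho(x,y) \leq t$ versus $\rho(x,y) > t$ is made, the latter requiring a dyadic annulus decomposition $E_k = \{z : 2^{k-1}\rho(x,y) \leq \rho(z,y) < 2^k \rho(x,y)\}$ and a geometric series summation. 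Your argument instead isolates the scale mismatch from the convolution structure: you first settle the diagonal case $t=s$ cleanly via the Peetre inequality plus Cauchy--Schwarz (avoiding any annulus summation), and then for $s<t$ you split at the threshold $\rho(z,y)=t$, reducing the far piece $U_2$ pointwise to the diagonal case and handling the near piece $U_1$ by replacing $|B(z,t)|$ with $|B(y,t)|$ and invoking \eqref{ges}. Your approach is more modular and arguably more transparent about where the loss of $n+n'$ in the exponent originates; the paper's approach is more hands-on but treats the general case in a single pass without a preliminary diagonal lemma.
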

\begin{proof}
By symmetry, we only need to show \eqref{imsy} for $t \geq  s$. To do this,
we decompose
\begin{align*}
\int_{M}D_{t,\sigma}(x,z)D_{s,\sigma}(z,y)d\mu(z) = \left(\int_{\Omega_{1}}
+ \int_{\Omega_{2}}\right) D_{t,\sigma}(x,z)D_{s,\sigma}(z,y)d\mu(z)=:I_{1} + I_{2},
\end{align*}
where $\Omega_{1}:= \{z \in M: \rho(y, z)< \rho(x,y)/2\}$ and $\Omega_{2}:=\{z \in M:
\rho(y,z) \geq \rho(x,y)/2\}$.
By the triangle inequality for the distance $\rho(\cdot, \cdot)$ we have $\rho(x,z) \geq \rho(x, y) / 2$ for all $z \in \Omega_{1}$.
From this and \eqref{iuy} we see that for all $z \in \Omega_{1}$,
\begin{equation} \label{mot}
\begin{split}
D_{t,\sigma}(x,z)
& = \big(|B(x,t)||B(z,t)|\big)^{-1/2}(1+t^{-1}\rho(x,z))^{-\sigma} \\
& \lesssim \big(|B(x,t)||B(y,t)|\big)^{-1/2}(1+t^{-1}\rho(y,z))^{n'/2}(1+t^{-1}\rho(x,z))^{-\sigma}  \\
& \lesssim \big(|B(x,t)||B(y,t)|\big)^{-1/2}(1+t^{-1}\rho(x,y))^{-\sigma+ n'/2}\\
& =D_{t, \sigma- n'/2}(x,y).
\end{split}
\end{equation}
This along with \eqref{ges} yields that
\begin{equation} \label{wtu1}
\begin{split}
I_{1} \lesssim D_{t, \sigma-n'/2}(x,y) \int_{\Omega_{1}} D_{s, \sigma} (z,y)d\mu(z)
\lesssim D_{t, \sigma-n'/2}(x,y) \leq D_{t, \sigma-n-n'}(x,y).
\end{split}
\end{equation}

Next we estimate $I_{2}$. Note that
by \eqref{iuy} and the elementary inequality
\begin{equation*}
1+t^{-1}\rho(y,z) \lesssim (1+t^{-1}\rho(x,y))(1+t^{-1}\rho(x,z)),
\end{equation*}
%and taking into account that $\sigma >2n$,
we have, for all $ z \in \Omega_{2}$,
\begin{equation} \label{ngil}
\begin{split}
D_{t,\sigma}(x,z)
& =  \big(|B(x,t)||B(z,t)|\big)^{-1/2}(1+t^{-1}\rho(x,z))^{-\sigma} \\
& \lesssim  \big(|B(x,t)||B(y,t)|\big)^{-1/2}(1+t^{-1}\rho(y,z))^{n'/2}(1+t^{-1}\rho(x,z))^{-n'/2} \\
& \lesssim  \big(|B(x,t)||B(y,t)|\big)^{-1/2}(1+t^{-1}\rho(x,y))^{n'/2}.
\end{split}
\end{equation}
Hence
\begin{equation} \label{jfl}
I_{2} \lesssim \big(|B(x,t)||B(y,t)|\big)^{-1/2}(1+t^{-1}\rho(x,y))^{n'/2} \int_{\Omega_{2}}
D_{s,\sigma} (z,y)d\mu(z).
\end{equation}
To proceed we consider two cases: $\rho (x,y) \leq t$ and $\rho(x,y) >t$.

If $\rho(x,y) \leq t$, then $1+t^{-1}\rho(x,y) \sim 1$, and hence it follows from \eqref{jfl} and \eqref{ges}  that
\begin{align} \label{ixw}
I_{2} \lesssim  \big(|B(x,t)||B(y,t)|\big)^{-1/2} \sim D_{t,\sigma-n-n'}(x,y).
\end{align}

If $\rho(x,y) > t$, we decompose the set $\Omega_{2}$ into
$\Omega_{2} = \bigcup_{k =0}^{\infty} E_{k}$, where
\begin{equation*}
E_{k}:= \{z \in M: 2^{k-1}\rho(x,y) \leq \rho(z,y) <2^{k}\rho(x,y)\}.
\end{equation*}
Then by \eqref{D2}, \eqref{iuy} and the fact that $t \geq s$,  we have
\begin{align*}
\int_{\Omega_{2}}D_{s, \sigma}&(z,y)d\mu(z) \lesssim |B(y,s)|^{-1}
\int_{\Omega_{2}} (1+s^{-1}\rho(z,y))^{-\sigma + n'/2}d\mu(z) \\
& \lesssim |B(y,s)|^{-1}s^{\sigma-n'/2} \sum_{k=0}^{\infty}\int_{E_{k}}
\rho(z,y)^{-\sigma + n'/2}d\mu(z) \\
& \leq s^{\sigma- n'/2}\sum_{k=0}^{\infty}
[2^{k-1}\rho(x,y)]^{-\sigma + n'/2} \frac{\big|B\big(y,2^{k}\rho(x,y)\big)\big|}{|B(y,s)|}\\
& \leq s^{\sigma-n'/2}\sum_{k=0}^{\infty}
[2^{k-1}\rho(x,y)]^{-\sigma + n'/2} \left(\frac{2^{k}\rho(x,y)}{s} \right)^{n}\\
& \lesssim t^{\sigma-n-n'/2}\rho(x,y)^{-\sigma + n+n'/2}\sum_{k=0}^{\infty}2^{-k(\sigma -n -n'/2)}   \\
& \lesssim (t^{-1}\rho(x,y))^{-\sigma+n + n'/2} \sim (1+t^{-1}\rho(x,y))^{-\sigma+n + n'/2}.
\end{align*}
 Inserting this estimate into \eqref{jfl} we obtain
\begin{equation} \label{wtu2}
I_{2} \lesssim D_{t, \sigma-n-n'}(x,y).
\end{equation}

Therefore, in either case we have $I_{2}\lesssim D_{t, \sigma-n-n'}(x,y)$, which together with
\eqref{wtu1} yields \eqref{imsy}. The proof is complete.
\end{proof}

\begin{lemma} \label{AOE}
Suppose $\phi, \psi$ are even Schwartz functions on $\mathbb{R}$ such that
$(\cdot)^{-2k}\phi(\cdot)  \in \mathcal{S}(\mathbb{R})$ for some positive integer $k$.
Then, if $s \leq t$, we have
 \begin{align} \label{2mg}
 \big|K_{\phi(s\sqrt{\mathscr{L}})\psi(t\sqrt{\mathscr{L}})}(x,y) \big| \leq C\big\|(\cdot)^{-2k}\phi(\cdot) \big\|_{(N)}
 \big\|(\cdot)^{2k}\psi(\cdot)\big\|_{(N)}\left( \frac{s}{t}\right)^{2k}D_{t, N-n-n'}(x,y),
 \end{align}
where $\|\cdot\|_{(N)}$ is defined by \eqref{swce}.
\end{lemma}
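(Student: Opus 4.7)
The plan is to reduce the composition $\phi(s\sqrt{\mathscr{L}})\psi(t\sqrt{\mathscr{L}})$ to a product of two "nice" functional calculus operators (each one Schwartz), extract the decay factor $(s/t)^{2k}$ algebraically via the spectral theorem, and then invoke the already-established pointwise kernel bound of Lemma~\ref{AOE1} together with the convolution-type estimate of Lemma~\ref{owr}.

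Concretely, I would set
\[
\widetilde{\phi}(\lambda):=\lambda^{-2k}\phi(\lambda),\qquad \widetilde{\psi}(\lambda):=\lambda^{2k}\psi(\lambda).
\]
Both $\widetilde{\phi}$ and $\widetilde{\psi}$ are even Schwartz functions on $\mathbb{R}$; in particular $\|\widetilde{\phi}\|_{(N)}<\infty$ and $\|\widetilde{\psi}\|_{(N)}<\infty$ (this is where the hypothesis $(\cdot)^{-2k}\phi(\cdot)\in\mathcal{S}(\mathbb{R})$ is used in an essential way, and where the factor $(\cdot)^{2k}$ causes no trouble since $\psi$ is Schwartz). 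By the spectral theorem,
\[
\phi(s\sqrt{\mathscr{L}})=\widetilde{\phi}(s\sqrt{\mathscr{L}})\,(s\sqrt{\mathscr{L}})^{2k},
\qquad
\psi(t\sqrt{\mathscr{L}})=(t\sqrt{\mathscr{L}})^{-2k}\widetilde{\psi}(t\sqrt{\mathscr{L}}),
\]
so that, using the functional calculus,
\[
\phi(s\sqrt{\mathscr{L}})\psi(t\sqrt{\mathscr{L}})
=\Bigl(\frac{s}{t}\Bigr)^{2k}\widetilde{\phi}(s\sqrt{\mathscr{L}})\widetilde{\psi}(t\sqrt{\mathscr{L}}).
\]
This is the key algebraic step, and it only makes sense because we can absorb the power of $\sqrt{\mathscr{L}}$ into each factor to produce Schwartz symbols.

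Once this factorization is in hand, I would write the kernel of the product as an integral
\[
K_{\phi(s\sqrt{\mathscr{L}})\psi(t\sqrt{\mathscr{L}})}(x,y)
=\Bigl(\frac{s}{t}\Bigr)^{2k}\int_M K_{\widetilde{\phi}(s\sqrt{\mathscr{L}})}(x,z)\,K_{\widetilde{\psi}(t\sqrt{\mathscr{L}})}(z,y)\,d\mu(z),
\]
and apply Lemma~\ref{AOE1} to each of the two kernels on the right-hand side (assuming, without loss of generality, that $N\geq n+1$; if $N$ is smaller we may prove the bound with a larger index and then use the trivial monotonicity of $D_{t,\sigma}$ in $\sigma$). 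This yields
\[
|K_{\widetilde{\phi}(s\sqrt{\mathscr{L}})}(x,z)|\lesssim \|\widetilde{\phi}\|_{(N)}D_{s,N}(x,z),\qquad
|K_{\widetilde{\psi}(t\sqrt{\mathscr{L}})}(z,y)|\lesssim \|\widetilde{\psi}\|_{(N)}D_{t,N}(z,y).
\]
Substituting, the remaining step is to estimate $\int_M D_{s,N}(x,z)D_{t,N}(z,y)\,d\mu(z)$. By Lemma~\ref{owr} (applied with $\sigma=N$, using $s\leq t$ so that $s\vee t=t$), this integral is bounded by a constant multiple of $D_{t,N-n-n'}(x,y)$. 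Combining everything gives the claimed inequality with the correct prefactor $(s/t)^{2k}\|(\cdot)^{-2k}\phi(\cdot)\|_{(N)}\|(\cdot)^{2k}\psi(\cdot)\|_{(N)}$.

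The only mildly delicate point, and the step I expect to require the most care, is the algebraic manipulation in the functional calculus: one needs to justify that the extraction $\phi(s\sqrt{\mathscr{L}})=\widetilde{\phi}(s\sqrt{\mathscr{L}})(s\sqrt{\mathscr{L}})^{2k}$ and the subsequent pairing with $\psi(t\sqrt{\mathscr{L}})$ is legitimate, and that both resulting operators are integral operators whose kernels compose as above. This is standard from the spectral theorem applied to the bounded Borel functions $\widetilde{\phi}$, $\widetilde{\psi}$ and the identity $\lambda^{2k}\phi(\lambda)\psi(\lambda)$-free form, but it must be written carefully so that no unbounded factor appears in isolation. The rest of the argument is a routine consequence of Lemmas~\ref{AOE1} and~\ref{owr}.
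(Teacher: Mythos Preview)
Your proposal is correct and follows essentially the same route as the paper: factor $\phi(s\sqrt{\mathscr{L}})\psi(t\sqrt{\mathscr{L}})=(s/t)^{2k}\,\widetilde{\phi}(s\sqrt{\mathscr{L}})\widetilde{\psi}(t\sqrt{\mathscr{L}})$ via the spectral theorem, apply Lemma~\ref{AOE1} to each factor, and then use Lemma~\ref{owr} to bound the resulting integral $\int_M D_{s,N}(x,z)D_{t,N}(z,y)\,d\mu(z)$ by $D_{t,N-n-n'}(x,y)$. Your remark about needing $N\geq n+1$ (and in fact $N>n+n'$ for Lemma~\ref{owr}) is a useful clarification that the paper leaves implicit.
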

\begin{proof}
Since
\begin{equation*}
\phi(s\sqrt{\mathscr{L}})\psi(t\sqrt{\mathscr{L}})=\left(\frac{s}{t} \right)^{2k}
\big[(s\sqrt{\mathscr{L}})^{-2k}\phi(s\sqrt{\mathscr{L}})\big] \big[(t\sqrt{\mathscr{L}})^{2k} \psi(t \sqrt{\mathscr{L}})\big],
\end{equation*}
by \eqref{compokernel}, Lemma \ref{AOE1} and Lemma \ref{owr}, we have
\begin{align*}
&\big|K_{ \phi(s\sqrt{\mathscr{L}})\psi(t\sqrt{\mathscr{L}})}(x,y)\big|\\
& =\left(\frac{s}{t} \right)^{2k}
 \left|\int_{M} K_{ (s\sqrt{\mathscr{L}})^{-2k}\phi(s\sqrt{\mathscr{L}})}(x,z)
K_{(t\sqrt{\mathscr{L}})^{2k} \psi(t \sqrt{\mathscr{L}}) } (z,y)d\mu(z) \right|\\
& \leq C\big\|(\cdot)^{-2k}\phi(\cdot) \big\|_{(N)}
 \big\|(\cdot)^{2k}\psi(\cdot)\big\|_{(N)}\left(\frac{s}{t} \right)^{2k}
\int_{M}  D_{s,N}(x,z)D_{t,N}(z,y)d\mu(z) \\
& \leq C\big\|(\cdot)^{-2k}\phi(\cdot) \big\|_{(N)}
 \big\|(\cdot)^{2k}\psi(\cdot)\big\|_{(N)}\left(\frac{s}{t} \right)^{2k}D_{t, N-n-n'}(x,y),
\end{align*}
as desired.
\end{proof}
\begin{remark}
{\rm Compared with \cite[Lemma 2.1 (ii)]{LYY}, the advantage of Lemma \ref{owr} is that
it avoids the appearance of the factor $\max\{(s/t)^{n}, (t/s)^{n}\}$ on the right-hand side of \eqref{imsy}.
Consequently, the factor $(t/s)^{n}$ does not appear on the right-hand side of \eqref{2mg}. (Compare with \cite[Proposition 2.14]{LYY}.)
This simple but important refinement yields the sufficiency of the condition $m > \max\{s/2, 0\}$ in Theorems \ref{heat} and \ref{area}.}
\end{remark}

\begin{remark}
{\rm Obviously, if we assume in addition that $(\cdot)^{-2k}\psi(\cdot)  \in \mathcal{S}(\mathbb{R})$
in Lemma \ref{AOE}, then for all $s, t >0$, we have
\begin{align*}
& \big|K_{\phi(s\sqrt{\mathscr{L}})\psi(t\sqrt{\mathscr{L}})}(x,y) \big|\\
&\hspace{1.5cm} \leq C  \max\left\{\big\|(\cdot)
 ^{-2k}\phi(\cdot) \big\|_{(N)} \big\|(\cdot)^{2k}\psi(\cdot) \big\|_{(N)}, \   \big\|(\cdot)^{2k}\phi(\cdot)\big\|_{(N)}\big\|(\cdot)^{-2k}\psi(\cdot) \big\|_{(N)}\right\} \\
& \hspace{2cm}\times  \left( \frac{s}{t} \wedge \frac{t}{s}\right)^{2k}D_{s \vee t, N-n-n'}(x,y).
\end{align*}
As a consequence, for all $j, \ell \in \mathbb{Z}$,
\begin{align*}
& \big|K_{\phi(2^{-j}\sqrt{\mathscr{L}})\psi(2^{-\ell}\sqrt{\mathscr{L}})}(x,y) \big|\\
&\hspace{1.5cm} \leq C  \max\left\{\big\|(\cdot)^{-2k}\phi(\cdot)
\big\|_{(N)}  \big\|(\cdot)^{2k}\psi(\cdot) \big\|_{(N)}, \   \big\|(\cdot)^{2k}\phi(\cdot) \big\|_{(N)} \big\|(\cdot)^{-2k}\psi(\cdot)
\big\|_{(N)}\right\} \\
&\hspace{2cm}  \times 2^{-2k |j -\ell|}D_{2^{-j \wedge \ell}, N-n-n'}(x,y).
 \end{align*}}
\end{remark}

\section{Proof of Theorem \ref{heat}} \label{sec4}

%We shall only give the proofs of \eqref{hqy1} and \eqref{hqy2};
%the proofs of \eqref{hqy00} and \eqref{hqy000} are easier and will be omitted.

Let $s \in \mathbb{R}$ and let $m$ be the smallest integer such that $m > \max\{s/2, 0\}$.
Throughout this section, $\omega_0$ and $\omega$ are two fixed even Schwartz functions on $\mathbb{R}$ given by
\begin{align} \label{uytr}
\omega_{0}(\lambda): =e^{-\lambda^{2}} \quad \mbox{and} \quad \omega(\lambda):= \lambda^{2m} e^{-\lambda^{2}}, \quad \lambda \in \mathbb{R}.
\end{align}
Thus we have $\omega_0 (\sqrt{\mathscr{L}}) = e^{-\mathscr{L}}$ and $\omega(t^{1/2} \sqrt{\mathscr{L}}) = (t \mathscr{L})^m e^{-t \mathscr{L}}$.

The central estimates of our approach is in the following lemma.
\begin{lemma} \label{moui}
Let $r > 0$ and  $N\in \mathbb{N}$. Then there is a positive constant $C=C(r, N)$ such that for all $f \in \mathcal{S}_{\mathscr{L}}'$,
$x \in M$, $t \in [1,4]$ and $\ell \geq 1$,
\begin{equation} \label{cen222}
\big|\omega(2^{-\ell}t^{1/2}\sqrt{\mathscr{L}})f (x)\big|^{r}
\leq C\sum_{j=0}^{\infty} 2^{-2Nrj}  \int_{M}  \frac{|\omega(2^{-(j+\ell)}t^{1/2}\sqrt{\mathscr{L}})f(z)|^{r}}
{{|B(z,2^{-(j+\ell)})|}(1+2^{\ell}\rho(x,z))^{Nr}} d\mu(z)
\end{equation}
and
\begin{equation} \label{cen333}
\begin{split}
\big| \omega_{0}(\sqrt{\mathscr{L}})f(x)\big|^{r}
&\leq C\left( \int_{M} \frac{|\omega_{0}(\sqrt{\mathscr{L}})f(z)|^{r}}{|B(z,1)|(1+\rho(x,z))^{Nr}} d\mu(z) \right.\\
&\quad\quad\quad\quad\quad \left. + \sum_{j=1}^{\infty} 2^{-2Nrj } \int_{M}
\frac{ |\omega(2^{-j}t^{1/2}\sqrt{\mathscr{L}})f(z)|^{r}}{|B(z,2^{-j})|(1+\rho(x,z))^{Nr}} d\mu(z)\right).
\end{split}
\end{equation}
\end{lemma}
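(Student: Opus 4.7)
The strategy is to adapt the Rychkov/Ullrich ``sub-mean value'' approach (cf.\ \cite{Rychkov, U}) to the operator-theoretic setting, leveraging the smooth functional calculus from \cite{KP} together with the off-diagonal estimates of Section \ref{sec3}. \emph{Step 1 (reproducing formula).} Mimicking Step 1 of Section \ref{sec4}, choose auxiliary even bumps $\eta_{0}, \eta \in C^{\infty}_{0}(\mathbb{R})$ and set $\zeta := \eta_{0}\omega_{0} + \sum_{k \geq 1}\eta(2^{-k}\cdot)\omega(2^{-k}\cdot) > 0$. Then $\psi_{0} := \eta_{0}/\zeta$ and $\psi := \eta/\zeta$ are Schwartz bumps satisfying $\psi_{0}(\lambda)\omega_{0}(\lambda) + \sum_{k\geq 1}\psi(2^{-k}\lambda)\omega(2^{-k}\lambda) \equiv 1$. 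Substituting $\lambda \mapsto 2^{-\ell}t^{1/2}\lambda$ and invoking Lemma \ref{Calderon} gives
\begin{equation*}
f = \psi_{0}(2^{-\ell}t^{1/2}\sqrt{\mathscr{L}})\omega_{0}(2^{-\ell}t^{1/2}\sqrt{\mathscr{L}})f + \sum_{j\geq 1}\psi(2^{-(j+\ell)}t^{1/2}\sqrt{\mathscr{L}})\omega(2^{-(j+\ell)}t^{1/2}\sqrt{\mathscr{L}})f
\end{equation*}
in $\mathcal{S}_{\mathscr{L}}'$; for \eqref{cen333} one uses the corresponding formula at $\ell = 0$.

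\emph{Step 2 (apply the target operator; kernel bounds).} Apply $\omega(2^{-\ell}t^{1/2}\sqrt{\mathscr{L}})$ to both sides (respectively $\omega_{0}(\sqrt{\mathscr{L}})$ for \eqref{cen333}). The crucial algebraic identity
\begin{equation*}
\omega(\mu)\psi_{0}(\mu)\omega_{0}(\mu) = \widetilde{\psi}_{0}(\mu)\omega(\mu),\qquad \widetilde{\psi}_{0}(\mu) := \psi_{0}(\mu)e^{-\mu^{2}},
\end{equation*}
(and its analogue $\omega_{0}\psi_{0}\omega_{0} = \widetilde{\psi}_{0}\omega_{0}$) rewrites the $j = 0$ contribution so that the right-hand side is self-referential in $\omega$ (respectively $\omega_{0}$). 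One thereby arrives at an integral representation in which each kernel is amenable to Lemma \ref{AOE1} or Lemma \ref{AOE}: the $j = 0$ kernel of $\widetilde{\psi}_{0}(2^{-\ell}t^{1/2}\sqrt{\mathscr{L}})$ is bounded by $D_{2^{-\ell}, N_{0}}(x, y)$, while for $j \geq 1$ the kernel of $\omega(2^{-\ell}t^{1/2}\sqrt{\mathscr{L}})\psi(2^{-(j+\ell)}t^{1/2}\sqrt{\mathscr{L}})$ is bounded by $2^{-2kj}D_{2^{-\ell}, N_{0}-n-n'}(x, y)$ for arbitrary $k$ (taking $\psi$ as the moment-free function at the smaller scale $2^{-(j+\ell)}t^{1/2}$ and $\omega$ as the Schwartz function at the larger scale $2^{-\ell}t^{1/2}$ in Lemma \ref{AOE}).

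\emph{Step 3 (upgrade to $r$-th power; sharpen volume factor).} When $r = 1$, combining Step 2 with the doubling condition and \eqref{iuy} yields the desired inequality: the implicit factor $[|B(x, 2^{-\ell})||B(y, 2^{-\ell})|]^{-1/2}$ in $D_{2^{-\ell}, N_{0}-n-n'}$ is upgraded to the sharper $|B(y, 2^{-(j+\ell)})|^{-1}$ at the cost of polynomial losses $(1+2^{\ell}\rho(x,y))^{n'/2}$ absorbed by taking $N_{0}$ large. For $0 < r < 1$ I would invoke the Peetre maximal absorption trick: split $|\omega(\cdots)f(y)| = |\omega(\cdots)f(y)|^{r}|\omega(\cdots)f(y)|^{1-r}$, control the $(1-r)$-factor by the Peetre maximal $[\omega(2^{-(j+\ell)}t^{1/2}\sqrt{\mathscr{L}})]^{*}_{a}f(x)(1+2^{j+\ell}\rho(x,y))^{a(1-r)}$ with $a$ sufficiently large, and absorb the resulting Peetre maxima into the left-hand side via a self-improvement argument. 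A priori finiteness of the Peetre maxima, needed for the absorption, is arranged by replacing $f$ first with a spectral truncation and passing to the limit afterward.

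The principal obstacle is Step 3: one must coordinate the parameters $k$, $N_{0}$ and $a$ so that the geometric series $\sum_{j} 2^{-2Nrj}$ converges at the required rate, the Peetre-maximal absorption closes, and the volume-factor adjustment to $|B(z, 2^{-(j+\ell)})|^{-1}$ fits at the correct higher-frequency scale — this combined bookkeeping is precisely what makes Lemma \ref{moui} a ``central estimate'' of the paper's approach. A subsidiary but essential observation is the algebraic rewriting $\omega\psi_{0}\omega_{0} = \widetilde{\psi}_{0}\omega$ (respectively $\omega_{0}\psi_{0}\omega_{0} = \widetilde{\psi}_{0}\omega_{0}$), which expresses the $j = 0$ contribution purely in terms of $\omega$ (respectively $\omega_{0}$) and thereby avoids the singular operation $\omega_{0}/\omega = \mu^{-2m}$ that would otherwise be required.
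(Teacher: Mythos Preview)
Your overall strategy---reproducing formula, off-diagonal kernel bounds via Lemma~\ref{AOE}, then a Rychkov-type absorption for $r<1$---is exactly the paper's. Two differences are worth noting. First, your ``algebraic rewriting'' $\omega\psi_0\omega_0 = \widetilde{\psi}_0\omega$ is nothing more than commutativity of spectral multipliers: the paper simply writes the $j=0$ term as $[\omega_0\psi_0](2^{-\ell}t^{1/2}\sqrt{\mathscr{L}})\,\omega(2^{-\ell}t^{1/2}\sqrt{\mathscr{L}})f$ and bounds the kernel of $\omega_0\psi_0$ directly by Lemma~\ref{AOE1}, with no further identity needed.

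More substantively, the paper does \emph{not} use spectral truncation for the a~priori finiteness in the absorption step. It introduces the single auxiliary quantity
\[
M_{\ell,N}f(x) := \sup_{i\geq 0}\sup_{y\in M}\, 2^{-2Ni}\,\frac{|\omega(2^{-(i+\ell)}t^{1/2}\sqrt{\mathscr{L}})f(y)|}{(1+2^\ell\rho(x,y))^N},
\]
and verifies $M_{\ell,N}f(x)<\infty$ for $N$ exceeding a threshold $N^f$ \emph{directly} from the continuity of $f\in\mathcal{S}'_\mathscr{L}$ (via the seminorms $\mathcal{P}_{k_0,\ell_0}$) combined with the kernel bound of Lemma~\ref{AOE1}. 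It then runs a two-pass bootstrap: monotonicity of the right-hand side in $N$ extends the estimate to all $N>0$ with a constant $C_{N,f}$ depending on $f$, and a second pass through the absorption inequality---now knowing $M_{\ell,N}f(x)<\infty$ for every $N$---removes the $f$-dependence of the constant. Your spectral-truncation route could in principle be made to work and would sidestep this bootstrap, but it requires justifying truncation of a distribution in $\mathcal{S}'_\mathscr{L}$ and the passage to the limit on both sides, neither of which is immediate here. Two minor points: the volume-factor upgrade $|B(z,2^{-\ell})|^{-1}\leq |B(z,2^{-(j+\ell)})|^{-1}$ is a trivial ball inclusion, not a source of polynomial loss; and the case $r\geq 1$ is handled separately in the paper by H\"older's inequality, which you do not mention.
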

\begin{proof}
The ideas of such estimates in $\mathbb{R}^n$ were originated in \cite{Bui2,Bui3,Bui4}.
We will follow these ideas. Our proof is also inspired by \cite{Rychkov,U}.

Choose nonnegative even functions $\eta_{0}, \eta \in \mathcal{S}(\mathbb{R})$ such that
\begin{align*}
\eta_{0}(\lambda) \neq 0  \Longleftrightarrow |\lambda| < 2
\quad \ \mbox{and} \quad \  \eta(\lambda) \neq 0 \Longleftrightarrow 1/2 < |\lambda| < 2.
\end{align*}
Then  set
\begin{align*}
\zeta(\lambda):= \eta_{0}(\lambda)\omega_{0}(\lambda)
+\sum_{\ell =1}^{\infty}\eta(2^{-\ell}\lambda)\omega(2^{-\ell}\lambda), \quad
\lambda \in \mathbb{R}.
\end{align*}
Note that $\zeta(\lambda) >0$ for every $\lambda \in \mathbb{R}$. Put
$\psi_{0}(\lambda):= \eta_{0}(\lambda)/\zeta(\lambda)$ and
$\psi(\lambda):= \eta(\lambda)/\zeta(\lambda)$.
Then $\psi_{0}, \psi$ are even Schwartz functions on $\mathbb{R}$ satisfying that
$\supp \psi_{0} \subset \{\lambda \in \mathbb{R}: |\lambda| \leq 2\}$, $\supp \psi \subset
\{\lambda \in \mathbb{R}: 1/2 \leq |\lambda| \leq 2\}$, and
\begin{equation} \label{hpw00}
\omega_{0}({\lambda})\psi_{0}({\lambda})+ \sum_{j =1}^{\infty}\omega(2^{-j}{\lambda})
\psi(2^{-j}{\lambda})=1, \quad \forall \lambda \in \mathbb{R}.
\end{equation}
Setting $\omega_j (\lambda) := \omega(2^{-j}\lambda)$ and $\psi_j (\lambda):= \psi(2^{-j}\lambda)$ for $j \geq 1$,
we rewrite \eqref{hpw00} as
\begin{equation} \label{hpw}
\sum_{j =0}^{\infty}\omega_j(\lambda)
\psi_j (\lambda)=1, \quad \forall \lambda \in \mathbb{R}.
\end{equation}
Replacing $\lambda$ with $2^{-\ell}t^{1/2} \lambda$ in \eqref{hpw}, we get that for all
$\ell \in \mathbb{N}_0$ and $t\in [1,4]$,
\begin{equation*}
\sum_{j =0}^{\infty}\omega_j(2^{-\ell}t^{1/2}\lambda)
\psi_j (2^{-\ell}t^{1/2}\lambda)=1, \quad \forall \lambda \in \mathbb{R}.
\end{equation*}
It then follows from Lemma \ref{Calderon} that for all $f \in \mathcal{S}_{\mathscr{L}}'$,
\begin{equation*}
f = \sum_{j =0}^{\infty}\omega_j(2^{-\ell}t^{1/2}\sqrt{\mathscr{L}})
\psi_j (2^{-\ell}t^{1/2}\sqrt{\mathscr{L}})f
\end{equation*}
with convergence in the topology of $\mathcal{S}_{\mathscr{L}}'$.
Hence, for every $\ell \in \mathbb{N}_0$, we have the pointwise representation
\begin{align} \label{pointwiserep}
\omega_\ell (t^{1/2}\sqrt{\mathscr{L}})f (y)
= \sum_{j =0}^{\infty}\omega_\ell (t^{1/2}\sqrt{\mathscr{L}}) \omega_j(2^{-\ell}t^{1/2}\sqrt{\mathscr{L}})
\psi_j (2^{-\ell}t^{1/2}\sqrt{\mathscr{L}})f(y), \quad y \in M.
\end{align}
For $j, \ell \in \mathbb{N}_0$, we define
\begin{align*}
\theta_{j, \ell} (\lambda):= \begin{cases}
\omega_0 (2^{-\ell} \lambda), & j =0, \  \ell \in \mathbb{N}_0,  \\
\omega_\ell (\lambda), & j \in \mathbb{N}, \  \ell \in \mathbb{N}_0.
\end{cases}
\end{align*}
One can check that
\begin{align*}
\omega_\ell (t^{1/2} \lambda) \omega_j (2^{-\ell} t^{1/2}\lambda)= \theta_{j, \ell} (t^{1/2}\lambda) \omega_{j + \ell} (t^{1/2} \lambda),
\quad \forall j,\ell \in \mathbb{N}_0, \ t \in [1,4].
\end{align*}
Hence we can rewrite \eqref{pointwiserep} as
\begin{equation} \label{9900}
\begin{split}
\omega_\ell (t^{1/2}\sqrt{\mathscr{L}})f (y)
&= \sum_{j =0}^{\infty}
\psi_j (2^{-\ell}t^{1/2}\sqrt{\mathscr{L}})\theta_{j, \ell} (t^{1/2}\sqrt{\mathscr{L}}) \omega_{j + \ell} (t^{1/2} \sqrt{\mathscr{L}}) f (y) \\
& = \sum_{j =0}^{\infty} \int_{M}
K_{\psi_j (2^{-\ell}t^{1/2}\sqrt{\mathscr{L}})\theta_{j, \ell} (t^{1/2}\sqrt{\mathscr{L}})}(y,z) \omega_{j + \ell} (t^{1/2} \sqrt{\mathscr{L}}) f (z)d\mu(z).
\end{split}
\end{equation}

Let $N \in \mathbb{N}$ with $N > n +3n'/2$.
Note that $(\cdot)^{2N}\omega(t^{1/2}\cdot) \in \mathcal{S}(\mathbb{R})$,  $(\cdot)^{-2N}
\psi(t^{1/2} \cdot) \in \mathcal{S}(\mathbb{R})$ and there is a constant $c_{N}$ such that
\begin{align*}
 \sup_{t \in [1,4]} \big\|(\cdot)^{2N}\omega(t^{1/2} \cdot)\big\|_{(N)} \leq c_{N} \quad \mbox{and} \quad
\sup_{t \in [1,4]} \big\|(\cdot)^{-2N}\psi(t^{1/2} \cdot)\big\|_{(N)} \leq c_{N}.
\end{align*}
Hence, by Lemma \ref{AOE}, we have
\begin{align} \label{1902}
\left|K_{\psi_j (2^{-\ell}t^{1/2}\sqrt{\mathscr{L}})\theta_{j, \ell} (t^{1/2}\sqrt{\mathscr{L}})}(y,z) \right| \leq c_{N}' 2^{-2Nj} D_{2^{-\ell},N-n-n'}(y,z),
 \quad \forall j, \ell \in \mathbb{N}_0.
\end{align}
Inserting \eqref{1902} into \eqref{9900}, and using \eqref{iuy}, we obtain that for $\ell \in \mathbb{N}_0$, $t \in [1,4]$ and $y \in M$,
\begin{align*}
|\omega_\ell (t^{1/2}\sqrt{\mathscr{L}})f (y)|
\leq C_{N} \sum_{j=0}^{\infty} 2^{-2Nj}\int_{M}  \frac{|\omega_{j + \ell}(t^{1/2}\sqrt{\mathscr{L}})f(z)|}
{{|B(z,2^{-\ell})|}(1+2^{\ell}\rho(y,z))^{N-n-3n'/2}} d\mu(z) .
\end{align*}
Obviously, this implies that for any $N \in \mathbb{N}$,
\begin{align} \label{xog}
|\omega_\ell (t^{1/2}\sqrt{\mathscr{L}})f (y)|
 \leq \widetilde{C}_{N} \sum_{j=0}^{\infty} 2^{-2Nj}\int_{M}  \frac{|\omega_{j + \ell}(t^{1/2}\sqrt{\mathscr{L}})f(z)|}
{{|B(z,2^{-\ell})|}(1+2^{\ell}\rho(y,z))^{N}} d\mu(z),
\end{align}
where $\widetilde{C}_{N}:= C_{\lfloor N + n+3n'/2\rfloor +1}$.
Replacing $\ell$ by $i + \ell$ ($i \in \mathbb{N}_{0}$), and multiplying on both sides of \eqref{xog} by $2^{-2Ni}$, we get
\begin{equation} \label{nhe}
\begin{split}
&2^{-2Ni} |\omega_{i + \ell}(t^{1/2}\sqrt{\mathscr{L}})f (y)|\\
& \leq \widetilde{C}_{N}\sum_{j=0}^{\infty} 2^{-2N(j+i)} \int_{M}  \frac{|\omega_{j + i + \ell}(t^{1/2}\sqrt{\mathscr{L}})f(z)|}
{{|B(z,2^{-(i+\ell)})|}(1+2^{i+\ell}\rho(y,z))^{N}} d\mu(z)\\
& = \widetilde{C}_{N}\sum_{j=i}^{\infty} 2^{-2Nj} \int_{M}  \frac{|\omega_{j+\ell}(t^{1/2}\sqrt{\mathscr{L}})f(z)|}
{{|B(z,2^{-(i+\ell)})|}(1+2^{i+\ell}\rho(y,z))^{N}} d\mu(z)\\
&\leq \widetilde{C}_{N}\sum_{j=0}^{\infty} 2^{-2Nj} \int_{M}  \frac{|\omega_{j+\ell}(t^{1/2}\sqrt{\mathscr{L}})f(z)|}
{{|B(z,2^{-(j+\ell)})|}(1+2^{\ell}\rho(y,z))^{N}} d\mu(z).
\end{split}
\end{equation}
We divide both sides of \eqref{nhe} by $(1 + 2^{\ell}\rho(x,y))^{N}$, and use the inequality
\begin{align*}
(1 + 2^{\ell}\rho(x,y))^{N}(1+2^{\ell}\rho(y,z))^{N} \geq (1 +2^{\ell}\rho(x,z))^{N},
\end{align*}
to obtain
{\small \begin{equation} \label{pyw}
2^{-2Ni} \frac{|\omega_{i+\ell}(t^{1/2}\sqrt{\mathscr{L}})f (y)|}{(1 + 2^{\ell}\rho(x,y))^{N}  }
\leq \widetilde{C}_{N}\sum_{j=0}^{\infty} 2^{-2Nj}  \int_{M}  \frac{|\omega_{j+\ell}(t^{1/2}\sqrt{\mathscr{L}})f(z)|}
{{|B(z,2^{-(j+\ell)})|}(1+2^{\ell}\rho(x,z))^{N}} d\mu(z).
\end{equation}}

To prove the desired estimates we first consider the case $0<r \leq 1$. Define
{\small \begin{align*}
M_{\ell,t, N}f(x):= \sup_{i \geq 0} \sup_{y \in M} 2^{-2Ni} \frac{\big|\omega_{i+\ell}(t^{1/2}
\sqrt{\mathscr{L}})f(y)\big|}{(1 +2^{\ell}\rho(x,y))^{N}}, \quad \ell \in \mathbb{N}_0, \  t \in [1,4], \ N >0, \ x \in M.
\end{align*}
Then} \eqref{pyw} implies that
{\small \begin{equation} \label{wlrt}
M_{\ell, t, N}f(x) \leq \widetilde{C}_{N} \big[M_{\ell,t, N}f(x)\big]^{1-r}\sum_{j=0}^{\infty} 2^{-2Nrj}
\int_{M}  \frac{|\omega_{j+\ell}(t^{1/2}\sqrt{\mathscr{L}})f(z)|^{r}}
{{|B(z,2^{-(j+\ell)})|}(1+2^{\ell}\rho(x,z))^{Nr}} d\mu(z).
\end{equation}
Thus}, if $M_{\ell, t, N}f(x)  < \infty$, we conclude that
\begin{equation} \label{9035}
\big[M_{\ell,t, N}f(x)\big]^{r} \leq \widetilde{C}_{N} \sum_{j=0}^{\infty} 2^{-2Nrj} \int_{M}  \frac{|\omega_{j+\ell}(t^{1/2}\sqrt{\mathscr{L}})f(z)|^{r}}
{{|B(z,2^{-(j+\ell)})|}(1+2^{\ell}\rho(x,z))^{Nr}} d\mu(z).
\end{equation}

We claim that  for any $f \in \mathcal{S}_{\mathscr{L}}'$, there exists a positive number $N^{f}$ (depending on $f$)
such that $M_{\ell, t, N}f(x) < \infty$ for all $N > N^{f}$, $\ell \in  \mathbb{N}_0$ and $t \in [1,4]$.
Indeed, since $f$ is a linear functional on $\mathcal{S}_{\mathscr{L}}$,
there exist $k_{0}, \ell_{0} \in \mathbb{N}_{0}$ such that
\begin{align*}
\big|\omega_{i+\ell}(t^{1/2}\sqrt{\mathscr{L}})f(y)\big| &= \big| \big(f, K_{\omega_{i+\ell}(t^{1/2}\sqrt{\mathscr{L}})} (x,\cdot) \big) \big| \\
& \leq C_{f} \big\|K_{\omega_{i+\ell}(t^{1/2}\sqrt{\mathscr{L}})} (y,\cdot) \big\|_{\mathcal{P}_{k_{0}, \ell_{0}}} \\
& =C_{f} \sup_{z \in M} \big|K_{\mathscr{L}^{k_{0}}\omega_{i+\ell}(t^{1/2}\sqrt{\mathscr{L}}) } (y,z) \big| (1 +\rho(z, x_{0}))^{\ell_{0}}.
\end{align*}
Setting $\eta (\lambda) := \lambda^{2k_0} \omega (\lambda)$, by Lemma \ref{AOE}, \eqref{D} and \eqref{iuy} we have
\begin{align*}
\big|K_{\mathscr{L}^{k_{0}}\omega_{i+\ell}(t^{1/2}\sqrt{\mathscr{L}}) } (y,z) \big|
&=  (2^{(i+\ell)}t^{-1/2})^{2k_{0}}\big|K_{(2^{-(i +\ell)}t^{1/2}\sqrt{\mathscr{L}})^{2k_{0}}\omega_{i+\ell}(t^{1/2}\sqrt{\mathscr{L}}) } (y,z) \big| \\
&= \begin{cases}
 (2^{(i+\ell)}t^{-1/2})^{2k_{0}}\big|K_{\eta(2^{-(i+\ell)}t^{1/2}\sqrt{\mathscr{L}}) } (y,z) \big|, & i+\ell \geq 1, \\
    (t^{-1/2})^{2k_{0}}\big|K_{\omega_0(t^{1/2}\sqrt{\mathscr{L}}) } (y,z) \big|,          & i+\ell =0
\end{cases} \\
&\lesssim 2^{(i+\ell)2k_{0}} \big|B(y, 2^{-(i+\ell)}t^{1/2})\big|^{-1} \big(1 +2^{-(i + \ell)}t^{1/2}\rho(y,z)\big)^{-\ell_{0}} \\
&\lesssim 2^{(i+\ell)2k_{0}} 2^{(i+\ell)n}|B(y,1)|^{-1} 2^{(i + \ell) \ell_0}(1 +\rho(y,z))^{-\ell_{0}}\\
&\lesssim 2^{(i+\ell)(2k_{0}  +\ell_0 +n)} |B(x,1)|^{-1}(1 +\rho(x,y))^{n'}(1 +\rho(y,z))^{-\ell_{0}}.
\end{align*}
Hence, if $N \geq \max\big\{ k_{0} + \lfloor(\ell_0/2) +(n/2) \rfloor +1, \  \lfloor\ell_{0} + n' \rfloor +1\big\}:= N^{f}$, we have
{\small \begin{align*}
&M_{\ell,t, N}f(x)\\
&= \sup_{i \geq 0} \sup_{y \in M} 2^{-2Ni} \frac{\big|\omega_{i+\ell}(t^{1/2}
\sqrt{\mathscr{L}})f(y)\big|}{(1 +2^{\ell}\rho(x,y))^{N}} \\
& \leq C  \sup_{i \geq 0} \sup_{y \in M}\sup_{ z \in M} 2^{-2Ni}2^{(i+\ell)(2k_{0}  + \ell_0 +n)}
\frac{|B(x,1)|^{-1}(1 +\rho(x,y))^{n'}(1 + \rho(y, z))^{-\ell_{0}} (1 +\rho(z,x_{0}))^{\ell_{0}}}{  (1 +2^{\ell}\rho(x,y))^{N} } \\
&\leq C 2^{\ell (2k_{0} + \ell_0 +n)}|B(x,1)|^{-1}(1+\rho(x,x_{0}))^{\ell_{0}}\\
&< \infty.
\end{align*}
Thus} \eqref{9035} is valid provided that $N \geq N^{f}$.
This along with the obvious inequality $|\omega(2^{-\ell}t^{1/2}\sqrt{\mathscr{L}})f (x)| \leq M_{\ell,t, N}f(x)$ implies that for any $f \in \mathcal{S}_{\mathscr{L}}'$,
there exists a positive number $N^{f}$ (depending on $f$) such that if $N \geq N^{f}$ then for all $\ell \in \mathbb{N}_0$,
\begin{equation} \label{oiuy}
|\omega_\ell (t^{1/2}\sqrt{\mathscr{L}})f (x)|^{r}\leq c
\sum_{j=0}^{\infty} 2^{-2Nrj} \int_{M}  \frac{|\omega_{j+\ell}(t^{1/2}\sqrt{\mathscr{L}})f(z)|^{r}}
{{|B(z,2^{-(j+\ell)})|}(1+2^{\ell}\rho(x,z))^{Nr}} d\mu(z),
\end{equation}
where $c=\widetilde{C}_N$ is a constant depending on $N$ but independent of $x, f, t$ and $\ell$.
Observe that the right-hand side of \eqref{oiuy}
decreases as $N$ increases. Therefore, \eqref{oiuy}  is valid for all $N >0$ with the constant
\begin{align*}
c =C_{N,f} = \begin{cases}
\widetilde{C}_{N^{f}} & \mbox{if } 0 <N < N^{f} ,\\
\widetilde{C}_{N} & \mbox{if } N \geq N^{f}
\end{cases}
\end{align*}
depending on $N$ and $f$.

However, our goal is to obtain \eqref{oiuy} with $c$ independent of $f$. To do this,
let $N$ be an arbitrary positive integer. We may assume that the
right-hand side of \eqref{oiuy} is finite, for otherwise \eqref{oiuy} is trivial.
From \eqref{oiuy} with $c = C_{N,f}$ it follows that
\begin{align*}
\big[M_{\ell,t, N}f(x)\big]^{r}
& \leq C_{N,f}\sup_{i \geq 0} \sum_{j=0}^{\infty} 2^{-2Nr(j+i)} \int_{M}  \frac{|\omega_{j+i+\ell}(t^{1/2}\sqrt{\mathscr{L}})f(z)|^{r}}
{{|B(z,2^{-(j+i+\ell)})|}(1+2^{i+\ell}\rho(x,z))^{Nr}} d\mu(z) \\
& \leq C_{N,f} \sum_{j=0}^{\infty} 2^{-2Nrj} \int_{M}  \frac{|\omega_{j+\ell}(t^{1/2}\sqrt{\mathscr{L}})f(z)|^{r}}
{{|B(z,2^{-(j+\ell)})|}(1+2^{\ell}\rho(x,z))^{Nr}} d\mu(z)\\
& <\infty.
\end{align*}
Then \eqref{wlrt} along with the finiteness of $M_{\ell, t, N}f(x)$ for all $N\in \mathbb{N}$ yields \eqref{oiuy}
with the constant $c$ independent of $f$. Thus we have proved \eqref{oiuy} in the case $0<r \leq 1$.

Next we show \eqref{oiuy} for $r > 1$.  Indeed, we start with \eqref{xog}
(with $N+ \lfloor n +n'\rfloor +1$ instead of $N$), use H\"{o}lder's inequality first for the integrals
and then for the sums,  and apply \eqref{iuy} and \eqref{inte}, to obtain
{\small \begin{align*}
|\omega_\ell (t^{1/2}\sqrt{\mathscr{L}})f (x)|
& \leq \widetilde{C}_{N} \sum_{j=0}^{\infty} 2^{-2(N+ \lfloor n+n' \rfloor +1)j}
\int_{M}  \frac{|\omega_{j+\ell}(t^{1/2}\sqrt{\mathscr{L}})f(z)|}
{{|B(z,2^{-\ell})|}(1+2^{\ell}\rho(x,z))^{N+ \lfloor n+n' \rfloor +1}} d\mu(z)\\
& \leq \widetilde{C}_N \sum_{j=0}^{\infty}2^{-2(N+ \lfloor n+n' \rfloor +1)j}
 \left( \int_{M}  \frac{|\omega_{j+\ell}(t^{1/2}\sqrt{\mathscr{L}})f(z)|^{r}}
{{|B(z,2^{-\ell})|}(1+2^{\ell}\rho(x,z))^{Nr}} d\mu(z)\right)^{1/r}\\
&\quad\quad\quad\quad\quad\quad\quad \times  \left( \int_{M}  \frac{1}
{{|B(z,2^{-\ell})|}(1+2^{\ell}\rho(x,z))^{(\lfloor n+n' \rfloor +1)r'}} d\mu(z)\right)^{1/r'}\\
& \leq C_{r,N} \left( \sum_{j=0}^{\infty} 2^{-2Nrj}
\int_{M}  \frac{|\omega_{j+\ell}(t^{1/2}\sqrt{\mathscr{L}})f(z)|^{r}}
{{|B(z,2^{-\ell})|}(1+2^{\ell}\rho(x,z))^{Nr}} d\mu(z)\right)^{1/r},
\end{align*}
which} implies \eqref{oiuy} since $|B(z,2^{-\ell})| \geq |B(z,2^{-(j+\ell)})|$ for $j \geq 0$.

In summary, we have proved that \eqref{oiuy} holds for all $\ell \in \mathbb{N}_0$ and $t \in [1,4]$.
Obviously, \eqref{oiuy} covers \eqref{cen222}. However, it does not cover \eqref{cen333}.
Indeed, taking $\ell =0$ and $t =1$ in \eqref{oiuy}, we get
\begin{equation} \label{mmyy}
|\omega_0 (\sqrt{\mathscr{L}})f (x)|^{r}\lesssim
\sum_{j=0}^{\infty} 2^{-2Nrj} \int_{M}  \frac{|\omega_{j}(\sqrt{\mathscr{L}})f(z)|^{r}}
{{|B(z,2^{-j})|}(1+ \rho(x,z))^{Nr}} d\mu(z),
\end{equation}
which is different from \eqref{cen333}.

We thus need a separate argument to prove \eqref{cen333}. Fix an arbitrary $t_0 \in [1,4]$.
Let $\chi_0 (\lambda) := \omega_0 (\lambda)$ and $\chi(\lambda) := \omega(t_0^{1/2} \lambda)$,
and set $\chi_j (\lambda):= \chi(2^{-j}\lambda)$ for $j \geq 1$.
Then there exist functions $\widetilde{\psi}_0, \widetilde{\psi}$ which satisfy
similar properties as $\psi_0, \psi$ such that
\begin{equation*}
\sum_{j =0}^{\infty}\chi_j (\lambda)
\widetilde{\psi}_j (\lambda)=1, \quad \forall \lambda \in \mathbb{R},
\end{equation*}
where $\widetilde{\psi}_j (\lambda) := \widetilde{\psi}(2^{-j}\lambda)$ for $j \geq 1$.
Based on this identity, we can argue similarly as in the proof of \eqref{oiuy} to get
\begin{equation*}
|\chi_\ell (t^{1/2}\sqrt{\mathscr{L}})f (x)|^{r}\lesssim
\sum_{j=0}^{\infty} 2^{-2Nrj} \int_{M}  \frac{|\chi_{j+\ell}(t^{1/2}\sqrt{\mathscr{L}})f(z)|^{r}}
{{|B(z,2^{-(j+\ell)})|}(1+2^{\ell}\rho(x,z))^{Nr}} d\mu(z),
\end{equation*}
Letting $\ell =0$ and $t =1$, and recalling the definition of $\chi_0$ and $\chi_j$, we obtain
\begin{align*}
|\omega_0 (\sqrt{\mathscr{L}})f (x)|^{r} &\lesssim  \int_{M}  \frac{|\omega_0(\sqrt{\mathscr{L}})f(z)|^{r}}
{{|B(z,1)|}(1+ \rho(x,z))^{Nr}} d\mu(z)  \\
& \quad + \sum_{j=1}^{\infty} 2^{-2Nrj} \int_{M}  \frac{|\omega(2^{-j}t_0^{1/2}\sqrt{\mathscr{L}})f(z)|^{r}}
{{|B(z,2^{-j})|}(1+ \rho(x,z))^{Nr}} d\mu(z),
\end{align*}
with the implicit constant independent of $t_0$.  Since $t_0$ is an arbitrary number in $[1,4]$, the estimate \eqref{cen333} is established.
Thus the proof of Lemma \ref{moui} is complete.
\end{proof}

The following two lemmas also provide key ingredients of the proof of Theorem \ref{heat}.

\begin{lemma} \label{Lemma3}
Let $\varphi_0, \varphi$ be two even Schwartz functions on $\mathbb{R}$ satisfying \eqref{908} and \eqref{909}.
Let $s \in \mathbb{R}$ and $a >0$. Then there exists $\delta >0$ such that
\begin{equation} \label{hrq}
\begin{split}
&|B(x,2^{-\ell}t^{1/2})|^{-s/n}|\omega(2^{-\ell}t^{1/2}\sqrt{\mathscr{L}})f(x) | \\
&\lesssim 2^{-\ell \delta}\big[\varphi_{0}(\sqrt{\mathscr{L}}) \big]^{\ast}_{a, -s/n}f(x)
 + \sum_{j=1}^{\infty}2^{-|j -\ell|\delta}\big[\varphi(2^{-j}\sqrt{\mathscr{L}}) \big]^{\ast}_{a, -s/n}f(x), \quad \forall \ell \geq 1.
\end{split}
\end{equation}
and
\begin{equation} \label{inhomo}
\begin{split}
&|B(x, 1)|^{-s/n}| \omega_{0}(\sqrt{\mathscr{L}})f(x)| \\
&\lesssim \big[ \varphi_{0}(\sqrt{\mathscr{L}})\big]^{\ast}_{a,-s/n}f(x)
+ \sum_{j=1}^{\infty} 2^{-j\delta }\big[ \varphi(2^{-j}\sqrt{\mathscr{L}})\big]^{\ast}_{a,-s/n}f(x).
\end{split}
\end{equation}
\end{lemma}
\begin{proof}
In the proof of Lemma \ref{moui} we have seen that there exist even Schwartz functions $\psi_{0},\psi$ on $\mathbb{R}$ such that
$\supp \psi_{0} \subset \{|\lambda| \leq 2\}$, $\supp \psi \subset \{1/2 \leq |\lambda| \leq 2\}$, and
\begin{equation} \label{hpwwwww}
\omega_{0}({\lambda})\psi_{0}({\lambda})+ \sum_{j =1}^{\infty}\omega(2^{-j}{\lambda})
\psi(2^{-j}{\lambda})=1, \quad \forall \lambda \in \mathbb{R}.
\end{equation}
Then it follows from Lemma \ref{Calderon} that for any $f \in \mathcal{S}_{\mathscr{L}}'$,
\begin{align*}
f = \psi_{0}(\sqrt{\mathscr{L}})\varphi_{0}(\sqrt{\mathscr{L}})f
+ \sum_{j=1}^{\infty} \psi(2^{-j}\sqrt{\mathscr{L}})\varphi(2^{-j}\sqrt{\mathscr{L}})f \quad \mbox{in } \mathcal{S}_{\mathscr{L}}'.
\end{align*}
 Hence, for $\ell \geq 1$ and $t \in [1,4]$, we have
\begin{equation} \label{703}
\begin{split}
\omega(2^{-\ell}t^{1/2}\sqrt{\mathscr{L}})f(x)& = \omega(2^{-\ell}t^{1/2}\sqrt{\mathscr{L}})\psi_{0}(\sqrt{\mathscr{L}})\varphi_{0}(\sqrt{\mathscr{L}})f(x)\\
&\quad + \sum_{j=1}^{\infty}\omega(2^{-\ell}t^{1/2}\sqrt{\mathscr{L}}) \psi(2^{-j}\sqrt{\mathscr{L}})\varphi(2^{-j}\sqrt{\mathscr{L}})f(x)\\
&= \int_{M}K_{\omega(2^{-\ell}t^{1/2}\sqrt{\mathscr{L}})\psi_{0}(\sqrt{\mathscr{L}})}(x,y)\varphi_{0}(\sqrt{\mathscr{L}})f(y)d\mu(y)\\
&\quad + \sum_{j=1}^{\infty}\int_{M}K_{\omega(2^{-\ell}t^{1/2}\sqrt{\mathscr{L}}) \psi(2^{-j}\sqrt{\mathscr{L}})}(x,y)\varphi(2^{-j}\sqrt{\mathscr{L}})f(y)d\mu(y)
\end{split}
\end{equation}
and
\begin{equation} \label{hhha}
\begin{split}
\omega_{0}(\sqrt{\mathscr{L}})f (x)&= \omega_{0}(\sqrt{\mathscr{L}})\psi_{0}(\sqrt{\mathscr{L}})\varphi_{0}(\sqrt{\mathscr{L}})(x) \\
&\quad + \sum_{j=1}^{\infty}\omega_{0}(\sqrt{\mathscr{L}}) \psi(2^{-j}\sqrt{\mathscr{L}})\varphi(2^{-j}\sqrt{\mathscr{L}})f(x)\\
&= \int_{M}K_{\omega_{0}(\sqrt{\mathscr{L}})\psi_{0}(\sqrt{\mathscr{L}})}(x,y)\varphi_{0}(\sqrt{\mathscr{L}})(y)d\mu(y) \\
&\quad +   \sum_{j=1}^{\infty}\int_{M}K_{\omega_{0}(\sqrt{\mathscr{L}}) \psi(2^{-j}\sqrt{\mathscr{L}})}(x,y)\varphi(2^{-j}\sqrt{\mathscr{L}})f(y)d\mu(y).
\end{split}
\end{equation}

Let $N \in \mathbb{N}$ such that $N -n -n' - |s|n'/n -a > n + n'/2$, and let $m' \in \mathbb{N}$
such that $2m' -\max\{-s, 0\} -a >0$.
If $\ell \geq 1$ and  $j \leq \ell$, then
since $(\cdot)^{-2m}\omega(t^{1/2}\cdot) \in \mathcal{S}(\mathbb{R})$ and
\begin{align*}
\sup_{t \in [1,4]} \big\|(\cdot)^{-2m}\omega(t^{1/2}\cdot)\big\|_{(N)} \leq c_{N},
\end{align*}
by Lemma \ref{AOE} we have
\begin{align} \label{hqy111}
\big| K_{\omega(2^{-\ell}t^{1/2}\sqrt{\mathscr{L}}) \psi(2^{-j}\sqrt{\mathscr{L}})}(x,y) \big|\lesssim 2^{-2m(\ell -j)}D_{2^{-j},N-n-n'}(x,y),
\quad \forall t \in [1,4].
\end{align}

If $\ell \geq 1$ and $j \geq \ell$, then using
the fact that $(\cdot)^{-2m'}\psi(\cdot) \in \mathcal{S}(\mathbb{R})$ (since $\psi$ vanishes near the origin) and Lemma \ref{AOE} we get
\begin{align} \label{hqy222}
\big| K_{\omega(2^{-\ell}t^{1/2}\sqrt{\mathscr{L}}) \psi(2^{-j}\sqrt{\mathscr{L}})}(x,y) \big|\lesssim 2^{-2m'(j -\ell)}D_{2^{-\ell},N-n-n'}(x,y),
\quad \forall t \in [1,4].
\end{align}

Also, for all $\ell \geq 1$ we have
\begin{align} \label{hqy333}
\big|K_{\omega(2^{-\ell}t^{1/2}\sqrt{\mathscr{L}})\psi_{0}(\sqrt{\mathscr{L}})}(x,y)\big| \lesssim 2^{-2m\ell}D_{1, N-n-n'}(x,y), \quad
\forall t \in [1,4].
\end{align}

Inserting the estimates \eqref{hqy111}, \eqref{hqy222} and \eqref{hqy333} into \eqref{703},  we obtain
that for all $\ell \geq 1$ and $t \in [1,4]$,
\begin{equation} \label{8911}
\begin{split}
&|B(x,2^{-\ell}t^{1/2})|^{-s/n}|\omega(2^{-\ell}t^{1/2}\sqrt{\mathscr{L}})f(x) | \\
& \lesssim 2^{-2m\ell}\int_{M}|B(x,2^{-\ell}t^{1/2})|^{-s/n} |\varphi_{0}(\sqrt{\mathscr{L}})f(y)|D_{1, N-n-n'}(x,y)d\mu(y) \\
&+ \sum_{j=1}^{\ell}2^{-2m(\ell -j)}\int_{M}|B(x,2^{-\ell}t^{1/2})|^{-s/n}|\varphi(2^{-j}\sqrt{\mathscr{L}})f(y)|D_{2^{-j},N-n-n'}(x,y)d\mu(y)\\
& + \sum_{j=\ell +1}^{\infty}2^{-2m'(j-\ell)}\int_{M}|B(x,2^{-\ell}t^{1/2})|^{-s/n}|\varphi(2^{-j}\sqrt{\mathscr{L}})f(y)|D_{2^{-\ell},N-n-n'}(x,y)d\mu(y).
\end{split}
\end{equation}

If $0 \leq j \leq \ell$, we have
\begin{align*}
|B(x, 2^{-\ell}t^{1/2})|^{-s/n} &\lesssim 2^{(\ell -j) \max \{s,0\}} |B(x,2^{-j} )|^{-s/n} \\
&\lesssim 2^{(\ell -j) \max \{s,0\}}|B(y,2^{-j})|^{-s/n}| (1 + 2^{j}\rho(x,y))^{|s|n'/n},
\end{align*}
while if $j \geq \ell +1$, we have
\begin{align*}
|B(x,2^{-\ell}t^{1/2})|^{-s/n}& \lesssim |B(y,2^{-\ell} )|^{-s/n}(1 +2^{\ell}\rho(x,y))^{|s|n'/n} \\
& \lesssim 2^{(j -\ell)\max\{-s,0\}}|B(y,2^{-j})|^{-s/n}(1 +2^{\ell}\rho(x,y))^{|s|n'/n} .
\end{align*}
Using these facts, we deduce from \eqref{8911} that for all $\ell \geq 1$ and $t \in [1,4]$,
\begin{align*}
& |B(x,2^{-\ell}t^{1/2})|^{-s/n}|\omega(2^{-\ell}t^{1/2}\sqrt{\mathscr{L}})f(x) |  \\
& \lesssim 2^{-(2m-\max\{s,0\})\ell} \int_{M}|B(y,1)|^{-s/n}|\varphi_{0}(\sqrt{\mathscr{L}})f(y)|D_{1,N-n-n'-|s|n'/n}(x,y)d\mu(y)  \\
&\quad  + \sum_{j=1}^{\ell}2^{-2m(\ell-j)}2^{(\ell -j)\max\{s,0\}}\int_{M}|B(y,2^{-j})|^{-s/n} |\varphi(2^{-j}\sqrt{\mathscr{L}})f(y)| \\
&\quad\quad\quad\quad\quad\quad\quad\quad\quad\quad\quad\quad \quad\quad\quad\quad
\times D_{2^{-j},N-n-n'-|s|n'/n}(x,y)d\mu(y)   \\
&\quad + \sum_{j=\ell +1}^{\infty}2^{-2m'(j-\ell)}2^{(j-\ell)\max\{-s,0\}}\int_{M}|B(y,2^{-j})|^{-s/n} |\varphi(2^{-j}\sqrt{\mathscr{L}})f(y)| \\
&\quad\quad\quad\quad\quad\quad\quad\quad\quad\quad\quad\quad \quad\quad\quad\quad \quad
\times D_{2^{-\ell },N-n-n'-|s|n'/n}(x,y)d\mu(y) \\
& \lesssim 2^{-(2m-\max\{s,0\})\ell} \int_{M}\frac{|B(y,1)|^{-s/n}|\varphi_{0}(\sqrt{\mathscr{L}})f(y)|}{(1 + \rho(x,y))^a}D_{1,N-n-n'-|s|n'/n-a}(x,y)d\mu(y)\\
&\quad  + \sum_{j=1}^{\ell}2^{-2m(\ell-j)}2^{(\ell -j)\max\{s,0\}}\int_{M}\frac{|B(y,2^{-j})|^{-s/n} |\varphi(2^{-j}\sqrt{\mathscr{L}})f(y)|}{(1 + 2^j \rho(x,y))^a}\\
&\quad\quad\quad\quad\quad\quad\quad\quad\quad\quad\quad\quad \quad\quad\quad
\times D_{2^{-j},N-n-n'-|s|n'/n -a}(x,y)d\mu(y) \nonumber  \\
&\quad + \sum_{j=\ell +1}^{\infty}2^{-2m'(j-\ell)}2^{(j-\ell)\max\{-s,0\}}2^{(j-\ell)a}\int_{M}\frac{|B(y,2^{-j})|^{-s/n} |\varphi(2^{-j}\sqrt{\mathscr{L}})f(y)|}
{(1 + 2^j\rho(x,y))^a} \\
&\quad\quad\quad\quad\quad\quad\quad\quad\quad\quad\quad\quad \quad\quad\quad\quad\quad\quad\quad
\times D_{2^{-\ell },N-n-n'-|s|n'/n-a}(x,y)d\mu(y)    \\
&\lesssim 2^{-(2m-\max\{s,0\})\ell}\big[\varphi_{0}(\sqrt{\mathscr{L}}) \big]^{\ast}_{a}f(x)+ \sum_{j=1}^{\ell}2^{-(2m-\max\{s,0\})(\ell -j)}\big[\varphi(2^{-j}\sqrt{\mathscr{L}}) \big]^{\ast}_{a}f(x) \\
& \quad + \sum_{j=\ell +1}^{\infty}2^{-(2m'-\max\{-s,0\}-a)(j -\ell)}\big[\varphi(2^{-j}\sqrt{\mathscr{L}}) \big]^{\ast}_{a}f(x) .
\end{align*}
where we used the elementary inequality
\begin{equation*}
(1 + 2^{j}\rho(x,y))^{a} \lesssim 2^{(j-\ell)a}(1 +2^{\ell}\rho(x,y))^{a} \quad \mbox{for } j \geq \ell,
\end{equation*}
and \eqref{ges} (taking into account that $N -n -n' - |s|n'/n -a > n + n'/2$).
This gives \eqref{hrq} with $\delta= \min\big\{2m-\max\{s,0\}, \  2m'-\max\{-s, 0\} - a\big\} >0$.

In a similar manner we can deduce \eqref{inhomo} from \eqref{hhha}.
The proof of Lemma \ref{Lemma3} is thus complete.
\end{proof}

\begin{lemma} \label{Lemma2}
Let $\varphi_0, \varphi$ be two even Schwartz functions on $\mathbb{R}$ satisfying \eqref{908} and \eqref{909}.
Let $s \in \mathbb{R}$ and $a >0$. Then there exists $\delta >0$ such that
\begin{equation} \label{5652}
\begin{split}
&|B(x,2^{-\ell})|^{-s/n}\big|\varphi(2^{-\ell}\sqrt{\mathscr{L}})f(x)\big|\\
 &\lesssim  2^{-\ell \delta } \big[\omega_{0}(\sqrt{\mathscr{L}})\big]^{\ast}_{a, -s/n}f(x)   + \sum_{j=1}^{\infty} 2^{- |j -\ell|\delta}\big[\omega(2^{-j}t^{1/2}\sqrt{\mathscr{L}})\big]^{\ast}_{a, -s/n}f(x), \quad \forall \ell \geq 1,
\end{split}
\end{equation}
and
\begin{equation} \label{traaa}
\begin{split}
&|B(x,1)|^{-s/n}\big|\varphi_{0}(\sqrt{\mathscr{L}})f(x)\big| \\
& \lesssim  \big[\omega_{0}(\sqrt{\mathscr{L}}) \big]^{\ast}_{a, -s/n}f(x)
+ \sum_{j=1}^{\infty} 2^{-j \delta} \big[ \omega(2^{-j}t^{1/2}\sqrt{\mathscr{L}})\big]^{\ast}_{a, -s/n}f(x).
\end{split}
\end{equation}
\end{lemma}
\begin{proof}
As before there exist
even Schwartz functions $\psi_{0},\psi$ on $\mathbb{R}$ such that
$\supp \psi_{0} \subset \{\lambda \in \mathbb{R}: |\lambda| \leq 2\}$, $\supp \psi \subset
\{\lambda \in \mathbb{R}: 1/2 \leq |\lambda| \leq 2\}$, and
\begin{align} \label{rig}
\psi_{0}(\lambda)\omega_{0}(\lambda)+\sum_{j =1}^{\infty} \psi(2^{-j}\lambda) \omega(2^{-j} \lambda) =1, \quad \forall \lambda \in \mathbb{R}.
\end{align}
Define
\begin{align} \label{rig22}
\theta(\lambda): = 1 -\sum_{j=1}^{\infty} \psi(2^{-j}\lambda) \omega(2^{-j}\lambda) = \psi_{0}(\lambda) \omega_{0}(\lambda), \quad \lambda \in \mathbb{R}.
\end{align}
Observe that $\supp \theta \subset \{\lambda \in \mathbb{R}: |\lambda| \leq 2\}$.
Since $\omega_{0}(\lambda) >0$ on $\{\lambda \in \mathbb{R}: |\lambda| \leq 2\}$, there exists an
even function $\phi \in \mathcal{S}(\mathbb{R})$ such that
\begin{align} \label{hy}
\phi(\lambda) = \frac{1}{\omega_{0}(\lambda)} \quad \mbox{when } |\lambda| \leq 2.
\end{align}
We now set
\begin{align*}
\psi_{0,t}(\lambda):=\phi(\lambda) \theta(t^{1/2}\lambda), \quad  t \in [1,4], \  \lambda\in\mathbb{R}.
\end{align*}
Then the fact that $\supp \theta (t \cdot) \subset \{\lambda \in \mathbb{R}:|\lambda| \leq 2\}$ ($\forall t \in [1,4]$)
along with \eqref{hy} implies that
\begin{align*}
\psi_{0,t}(\lambda) \omega_{0}(\lambda) = \phi(\lambda) \theta(t^{1/2}\lambda) \omega_{0}(\lambda)=\theta(t^{1/2}\lambda),
\quad \forall t \in [1,4], \ \forall \lambda\in\mathbb{R}.
\end{align*}
From this, \eqref{rig} and \eqref{rig22}, we deduce that
\begin{align*}
\psi_{0,t}(\lambda)\omega_{0}(\lambda) + \sum_{j=1}^{\infty} \psi(2^{-j}t^{1/2}\lambda) \omega(2^{-j}t^{1/2}
\lambda) =1, \quad \forall t \in [1,4], \
\forall \lambda \in \mathbb{R}.
\end{align*}
Hence it follows from Lemma \ref{Calderon} that for any $f \in \mathcal{S}_{\mathscr{L}}'$,
\begin{align} \label{nontrivial}
f = \psi_{0,t}(\sqrt{\mathscr{L}})\omega_{0}(\sqrt{\mathscr{L}})f +  \sum_{j=1}^{\infty}
\psi(2^{-j}t^{1/2}\sqrt{\mathscr{L}}) \omega(2^{-j}t^{1/2}\sqrt{\mathscr{L}})f
\quad \mbox{in } \mathcal{S}_{\mathscr{L}}'.
\end{align}
Consequently, for $\ell \geq 1$ and $y \in M$, we have
\begin{align} \label{1455}
\begin{split}
\varphi(2^{-\ell}\sqrt{\mathscr{L}})f(x) &= \varphi(2^{-\ell}\sqrt{\mathscr{L}})\psi_{0,t}(\sqrt{\mathscr{L}})\omega_{0}(\sqrt{\mathscr{L}})f(x) \\
& \quad+ \sum_{j=1}^{\infty} \varphi(2^{-\ell}\sqrt{\mathscr{L}})\psi(2^{-j}t^{1/2}\sqrt{\mathscr{L}}) \omega(2^{-j}t^{1/2}\sqrt{\mathscr{L}})f(x)\\
&= \int_{M} K_{\varphi(2^{-\ell}\sqrt{\mathscr{L}})\psi_{0,t}(\sqrt{\mathscr{L}})}(x,y)\omega_{0}(\sqrt{\mathscr{L}})f(y)d\mu(y) \\
& \quad + \sum_{j=1}^{\infty} \int_{M}K_{ \varphi(2^{-\ell}\sqrt{\mathscr{L}})\psi(2^{-j}t^{1/2}\sqrt{\mathscr{L}})} (x,y) \omega(2^{-j}t^{1/2}\sqrt{\mathscr{L}})f(y)d\mu(y).
\end{split}
\end{align}
Analogously,
\begin{equation} \label{omag}
\begin{split}
\varphi_{0}(\sqrt{\mathscr{L}})f(x)
&= \int_{M}K_{\varphi_{0}(\sqrt{\mathscr{L}})\psi_{0,t}(\sqrt{\mathscr{L}})}(x,y)\omega_{0}(\sqrt{\mathscr{L}})f (y)d\mu(y)\\
&\quad + \sum_{j=1}^{\infty} \int_{M}K_{\varphi_{0}(\sqrt{\mathscr{L}})\psi(2^{-j}t^{1/2}\sqrt{\mathscr{L}})}(x,y) \omega(2^{-j}t^{1/2}\sqrt{\mathscr{L}})f(y)d\mu(y).
\end{split}
\end{equation}
Since both $\varphi$ and $\psi$ vanish near the origin, we have
$(\cdot)^{-2k} \phi(\cdot)
, \ (\cdot)^{-2k} \psi(\cdot)
\in \mathcal{S}(\mathbb{R})$ for arbitrarily large $k \in \mathbb{N}$.
Hence by Lemma \ref{AOE} we have sufficiently good estimates for the kernels
in \eqref{1455} and \eqref{omag}.
Therefore we can argue similarly as in the proof of Lemma~\ref{Lemma3} to obtain the desired estimates \eqref{5652}
and \eqref{traaa}.
\end{proof}

We are now in position to prove Theorem \ref{heat}.
\begin{proof}[Proof of Theorem \ref{heat}]
We shall only give the proofs of \eqref{hqy1} and \eqref{hqy2} for ``nonclassical'' spaces;
the proofs of \eqref{hqy00} and \eqref{hqy000} for ``classical'' spaces are easier and will be omitted.

In what follows, let $\varphi_0, \varphi$ be even Schwartz functions on $\mathbb{R}$ satisfying \eqref{908} and \eqref{909}.

\medskip
{\it Step 1.} We are going to show that for any $a >0$ and $f \in \mathcal{S}_{\mathscr{L}}'$,
\begin{equation} \label{pmg1}
\begin{split}
& \big\||B(\cdot, 1)|^{-s/n}\varphi_{0}(\sqrt{\mathscr{L}})
f(\cdot)\big\|_{L^{p}} + \left( \sum_{j =1}^{\infty} \big\| |B(\cdot, 2^{-j})|^{-s/n}\varphi
(2^{-j}\sqrt{\mathscr{L}})f(\cdot)\big\|_{L^{p}}^{q}
  \right)^{1/q}  \\
& \lesssim \left\|\big[\omega_{0}(\sqrt{\mathscr{L}})\big]^{\ast}_{a, -s/n}f \right\|_{L^{p}} +  \left( \int_{0}^{1}
 \left\|\big[ \omega(t^{1/2}\sqrt{\mathscr{L}})\big]^{\ast}_{a, -s/n}f\right\|_{L^{p}}^{q}\frac{dt}{t}\right)^{1/q}
\end{split}
\end{equation}
and
\begin{equation} \label{pmg}
\begin{split}
&\big\|||B(\cdot, 1)|^{-s/n}\varphi_{0}(\sqrt{\mathscr{L}})f(\cdot)| \big\|_{L^{p}} +\left\|\left(\sum_{j= 1}^{\infty}
|B(\cdot, 2^{-j})|^{-sq/n} |\varphi(2^{-j}\sqrt{\mathscr{L}})f(\cdot)|^{q}\right)^{1/q}\right\|_{L^{p}} \\
& \lesssim \left\|\big[\omega_{0}(\sqrt{\mathscr{L}})\big]^{\ast}_{a, -s/n}f \right\|_{L^{p}} + \left\|  \left( \int_{0}^{1}
\left|\big[ \omega(t^{1/2}\sqrt{\mathscr{L}})\big]^{\ast}_{a, -s/n}f\right|^{q}\frac{dt}{t}\right)^{1/q}\right\|_{L^{p}}.
\end{split}
\end{equation}

We first prove \eqref{pmg1}. To do this, we consider two cases.

{\bf Case 1:} $1 < q <\infty$. We take the norm $\big(\int_{1}^{4}|\cdot|^{q}\frac{dt}{t}\big)^{1/q}$ on both sides of \eqref{5652} and \eqref{traaa}
in Lemma \ref{Lemma2}
(noting that the left-hand sides of these inequalities are independent of $t$), and use the Minkowski's inequality to obtain
{\small \begin{equation} \label{zh}
\begin{split}
& |B(x,2^{-\ell})|^{-s/n}\big|\varphi(2^{-\ell}\sqrt{\mathscr{L}})f(x)\big| \\
&\lesssim 2^{- \ell \delta} \big[\omega_{0}(\sqrt{\mathscr{L}}) \big]^{\ast}_{a, -s/n}f(x)  + \sum_{j=1}^{\infty} 2^{-|j-\ell| \delta}\left(\int_{1}^{4}
\left|\big[ \omega(2^{-j}t^{1/2}\sqrt{\mathscr{L}})\big]^{\ast}_{a, -s/n}f(x)\right|^{q}\frac{dt}{t}\right)^{1/q}
\end{split}
\end{equation}
for} all $\ell \geq 1$, and
\begin{equation} \label{xia}
\begin{split}
&|B(x,1)|^{-s/n}\big|\varphi_{0}(\sqrt{\mathscr{L}})f(x)\big|\\
& \lesssim  \big[\omega_{0}(\sqrt{\mathscr{L}}) \big]^{\ast}_{a, -s/n}f(x) + \sum_{j=1}^{\infty} 2^{-j \delta}\left(\int_{1}^{4}
\left|\big[ \omega(2^{-j}t^{1/2}\sqrt{\mathscr{L}})\big]^{\ast}_{a, -s/n}f(x)\right|^{q}\frac{dt}{t}\right)^{1/q}.
\end{split}
\end{equation}
Put
\begin{align*}
G_{\ell}(x):= \begin{cases}
|B(x,1)|^{-s/n}\big|\varphi_{0}(\sqrt{\mathscr{L}})f(x) , & \ell =0 ,   \\
|B(x,2^{-\ell})|^{-s/n}
\big|\varphi(2^{-\ell}\sqrt{\mathscr{L}})f(x)\big|, &\ell =1,2,\cdots
\end{cases}
\end{align*}
and
\begin{align*}
g_j (x):= \begin{cases}
\big[\omega_{0}(\sqrt{\mathscr{L}})\big]^{\ast}_{a, -s/n}f(x), & j =0  ,  \\
\displaystyle \left(\int_{1}^{4}\big|\big[ \omega(2^{-j}t^{1/2}\sqrt{\mathscr{L}})\big]^{\ast}_{a, -s/n}f(x)\big|^{q}
\frac{dt}{t}\right)^{1/q}, &j =1,2,\cdots.
\end{cases}
\end{align*}
Then \eqref{zh} coupled with \eqref{xia} implies
\begin{equation*}
G_{\ell}(x) \lesssim \sum_{j=0}^{\infty} 2^{-|j -\ell|\delta} g_{j}(x), \quad \ell =0,1,2,\cdots,
\end{equation*}
Applying Lemma \ref{sfga} in $L^{p}(\ell^{q})$ then yields
\begin{align*}
&\big\||B(\cdot, 1)|^{-s/n}\varphi_{0}(\sqrt{\mathscr{L}})f(\cdot) \big\|_{L^{p}} +\left\|\left(\sum_{j= 1}^{\infty}
|B(\cdot, 2^{-j})|^{-sq/n} |\varphi(2^{-j}\sqrt{\mathscr{L}})f(\cdot)|^{q}\right)^{1/q}\right\|_{L^{p}} \\
& \lesssim  \left\|\big[\omega_{0}(\sqrt{\mathscr{L}})\big]^{\ast}_{a, -s/n}f \right\|_{L^{p}} +\left\|\left(\sum_{j=1}^{\infty}\int_{1}^{4}\left|\big[ \omega(2^{-j}t^{1/2}\sqrt{\mathscr{L}})\big]^{\ast}_{a, -s/n}f\right|^{q}\frac{dt}{t}\right)^{1/q}\right\|_{L^{p}} \\
& = \left\|\big[\omega_{0}(\sqrt{\mathscr{L}})\big]^{\ast}_{a, -s/n}f \right\|_{L^{p}} +\left\|\left(\sum_{j=1}^{\infty}\int_{2^{-2j}}^{2^{-2(j-1)}}\left|\big[ \omega(t^{1/2}\sqrt{\mathscr{L}})\big]^{\ast}_{a, -s/n}f\right|^{q}\frac{dt}{t}\right)^{1/q}\right\|_{L^{p}} \\
& = \left\|\big[\omega_{0}(\sqrt{\mathscr{L}})\big]^{\ast}_{a, -s/n}f \right\|_{L^{p}} +\left\|\left(\int_{0}^{1}\left|\big[ \omega(t^{1/2}\sqrt{\mathscr{L}})\big]^{\ast}_{a, -s/n}f\right|^{q}\frac{dt}{t}\right)^{1/q}\right\|_{L^{p}} .
\end{align*}
This proves \eqref{pmg1} in the case $q >1$.

{\bf Case 2:} $0< q  \leq 1$. Using the inequality $(\sum_j u_j)^q \leq \sum_j |u_j|^q$, we deduce from \eqref{5652} and \eqref{traaa} that
{\small \begin{equation} \label{zh2}
\begin{split}
& |B(x,2^{-\ell})|^{-sq/n}\big|\varphi(2^{-\ell}\sqrt{\mathscr{L}})f(x)\big|^{q} \\
& \lesssim 2^{-\ell \delta q} \left| \big[\omega_{0}(\sqrt{\mathscr{L}}) \big]^{\ast}_{a, -s/n}f(x)\right|^{q}
 + \sum_{j=1}^{\infty} 2^{-|j-\ell| \delta q}\int_{1}^{4} \left|\big[ \omega(2^{-j}t^{1/2}\sqrt{\mathscr{L}})\big]^{\ast}_{a, -s/n}f(x)\right|^{q}\frac{dt}{t}
\end{split}
\end{equation}}
for all $\ell \geq 1$, and
\begin{equation} \label{xia2}
\begin{split}
&|B(x,1)|^{-sq/n}\big|\varphi_{0}(\sqrt{\mathscr{L}})f(x)\big|^{q} \\
& \lesssim \left| \big[\omega_{0}(\sqrt{\mathscr{L}}) \big]^{\ast}_{a, -s/n}f(x)\right|^{q}
+ \sum_{j=1}^{\infty} 2^{-j \delta q}\int_{1}^{4} \left|\big[ \omega(2^{-j}t^{1/2}\sqrt{\mathscr{L}})\big]^{\ast}_{a, -s/n}f(x)\right|^{q}\frac{dt}{t}.
\end{split}
\end{equation}
Put
 \begin{align*}
 \widetilde{G}_{\ell}(x):= \begin{cases}
 |B(x,1)|^{-sq/n}\big|\varphi_{0}(\sqrt{\mathscr{L}})f(x)\big|^{q}, & \ell =0, \\
  |B(x,2^{-\ell})|^{-sq/n}\big|\varphi(2^{-\ell}\sqrt{\mathscr{L}})f(x)\big|^{q}, & \ell =1,2, \cdots
 \end{cases}
 \end{align*}
 and
 \begin{align*}
\widetilde{g}_{j}(x):= \begin{cases}
 \left|\big[\omega_{0}(\sqrt{\mathscr{L}})\big]^{\ast}_{a, -s/n}f(x) \right|^{q}, & j =0, \\
  \displaystyle \int_{1}^{4} \left|\big[ \omega(2^{-j}t^{1/2}\sqrt{\mathscr{L}})\big]^{\ast}_{a, -s/n}f(x)
  \right|^{q}\frac{dt}{t}, & j =1,2, \cdots.
 \end{cases}
 \end{align*}
Then \eqref{zh2} coupled with \eqref{xia2} implies
\begin{equation} \label{0683}
\widetilde{G}_{\ell}(x) \lesssim \sum_{j=0}^{\infty} 2^{-|j -\ell|\delta q } \widetilde{g}_{j}(x), \quad \ell =0,1,2,\cdots,
\end{equation}
 Applying Lemma \ref{sfga} in $L^{p/q}(\ell^{1})$ then yields
\begin{align*}
&\big\||B(\cdot, 1)|^{-s/n}\varphi_{0}(\sqrt{\mathscr{L}})f(\cdot) \big\|_{L^{p}} +\left\|\left(\sum_{j= 1}^{\infty}
|B(\cdot, 2^{-j})|^{-sq/n} |\varphi(2^{-j}\sqrt{\mathscr{L}})f(\cdot)|^{q}\right)^{1/q}\right\|_{L^{p}}\\
&=\big\||B(\cdot, 1)|^{-sq/n}|\varphi_{0}(\sqrt{\mathscr{L}})f(\cdot)|^q \big\|_{L^{p/q}}^{1/q} +\left\|\sum_{j= 1}^{\infty}
|B(\cdot, 2^{-j})|^{-sq/n} |\varphi(2^{-j}\sqrt{\mathscr{L}})f(\cdot)|^{q}\right\|_{L^{p/q}}^{1/q} \\
& \lesssim  \left\|\big|\big[\omega_{0}(\sqrt{\mathscr{L}})\big]^{\ast}_{a, -s/n}f\big|^q \right\|_{L^{p/q}}^{1/q} +\left\|\sum_{j=1}^{\infty}\int_{1}^{4}\left|\big[ \omega(2^{-j}t^{1/2}\sqrt{\mathscr{L}})\big]^{\ast}_{a, -s/n}f(x)\right|^{q}\frac{dt}{t}\right\|_{L^{p/q}}^{1/q} \\
& =  \left\|\big|\big[\omega_{0}(\sqrt{\mathscr{L}})\big]^{\ast}_{a, -s/n}f\big|^q \right\|_{L^{p/q}}^{1/q} +\left\|\sum_{j=1}^{\infty}\int_{2^{-2j}}^{2^{-2(j-1)}}\left|\big[ \omega(t^{1/2}\sqrt{\mathscr{L}})\big]^{\ast}_{a, -s/n}f(x)\right|^{q}\frac{dt}{t}\right\|_{L^{p/q}}^{1/q} \\
& = \left\|\big|\big[\omega_{0}(\sqrt{\mathscr{L}})\big]^{\ast}_{a, -s/n}f\big|^q \right\|_{L^{p/q}}^{1/q} +\left\|\int_{0}^{1}\left|\big[ \omega(t^{1/2}\sqrt{\mathscr{L}})\big]^{\ast}_{a, -s/n}f(x)\right|^{q}\frac{dt}{t}\right\|_{L^{p/q}}^{1/q} \\
& = \left\|\big[\omega_{0}(\sqrt{\mathscr{L}})\big]^{\ast}_{a, -s/n}f \right\|_{L^{p}} +\left\|\left(\int_{0}^{1}\left|\big[ \omega(t^{1/2}\sqrt{\mathscr{L}})\big]^{\ast}_{a, -s/n}f(x)\right|^{q}\frac{dt}{t}\right)^{1/q}\right\|_{L^{p}}.
\end{align*}
This proves \eqref{pmg1} in the case $0< q \leq 1$.

\medskip
Next we prove \eqref{pmg}.  We still consider two cases.

{\bf Case A: } $1 < p \leq \infty$. In this case, we take the $L^p$ norm on both sides of \eqref{hqq} and \eqref{hqqq} and use Minkowski's inequality to get
\begin{equation} \label{hugg}
\begin{split}
& \left\||B(\cdot,2^{-\ell})|^{-s/n} \varphi(2^{-\ell}\sqrt{\mathscr{L}})f(\cdot)\right\|_{L^{p}} \\
 &\lesssim  2^{-\ell \delta} \left\|\big[\omega_{0}(\sqrt{\mathscr{L}})\big]^{\ast}_{a, -s/n}f\right\|_{L^p} + \sum_{j=1}^{\infty} 2^{-|j -\ell|\delta}\left\|\big[\omega(2^{-j}t^{1/2}\sqrt{\mathscr{L}})\big]^{\ast}_{a, -s/n}f\right\|_{L^p}, \quad \ell \geq 1
\end{split}
\end{equation}
and
\begin{equation} \label{huggg}
\begin{split}
& \big\||B(\cdot,1)|^{-s/n} \varphi_0(\sqrt{\mathscr{L}})f(\cdot)\big\|_{L^{p}} \\
 &\lesssim  \left\|\big[\omega_{0}(\sqrt{\mathscr{L}})\big]^{\ast}_{a, -s/n}f\right\|_{L^p} + \sum_{j=1}^{\infty} 2^{- j \delta}\left\|\big[\omega(2^{-j}t^{1/2}\sqrt{\mathscr{L}})\big]^{\ast}_{a, -s/n}f\right\|_{L^p}.
\end{split}
\end{equation}
Applying Lemma \ref{sfga} with respect to the variable $t$ (and noting that the left-hand sides of \eqref{hugg} and \eqref{huggg} are independent of $t$),
we then obtain
\begin{align*}
& \big\||B(\cdot,1)|^{-s/n} \varphi_0(\sqrt{\mathscr{L}})f(\cdot)\big\|_{L^{p}(M)}  + \left\| \left\{  \big\||B(\cdot,2^{-j})|^{-s/n} \varphi(2^{-j}\sqrt{\mathscr{L}})f(\cdot)\big\|_{L^{p}(M)}\right\}_{j=1}^\infty \right\|_{\ell^q} \\
& \lesssim \left\|\big[\omega_{0}(\sqrt{\mathscr{L}})\big]^{\ast}_{a, -s/n}f\right\|_{L^p(M)} +\left\|\left\{\left\| \left\|\big[\omega(2^{-j}t^{1/2}\sqrt{\mathscr{L}})\big]^{\ast}_{a, -s/n}f\right\|_{L^p(M)}\right\|_{L^q([1,4],dt/t)}  \right\}_{j=1}^{\infty} \right\|_{\ell^q}
\end{align*}
This gives \eqref{pmg} by direct computation.

{\bf Case B.} $0< q \leq 1$. Rasing \eqref{5652} and \eqref{traaa} to the power $p$, using the inequality $(\sum_j u_j )^p \leq \sum_j |u_j|^p$,
and then integrating both sides on $M$, we obtain
{\small \begin{equation} \label{hqq}
\begin{split}
& \big\||B(\cdot,2^{-\ell})|^{-s/n} \varphi(2^{-\ell}\sqrt{\mathscr{L}})f(\cdot)\big\|_{L^{p}}^p \\
 &\lesssim  2^{-\ell \delta p} \left\|\big[\omega_{0}(\sqrt{\mathscr{L}})\big]^{\ast}_{a, -s/n}f\right\|_{L^p}^p + \sum_{j=1}^{\infty} 2^{-|j -\ell| \delta p}\left\|\big[\omega(2^{-j}t^{1/2}\sqrt{\mathscr{L}})\big]^{\ast}_{a, -s/n}f\right\|_{L^p}^p, \quad \ell \geq 1,
\end{split}
\end{equation}
and}
\begin{equation} \label{hqqq}
\begin{split}
& \big\||B(\cdot,1)|^{-s/n} \varphi_0(\sqrt{\mathscr{L}})f(\cdot)\big\|_{L^{p}}^p \\
 &\lesssim   \left\|\big[\omega_{0}(\sqrt{\mathscr{L}})\big]^{\ast}_{a, -s/n}f\right\|_{L^p}^p + \sum_{j=1}^{\infty} 2^{-j  \delta p}\left\|\big[\omega(2^{-j}t^{1/2}\sqrt{\mathscr{L}})\big]^{\ast}_{a, -s/n}f\right\|_{L^p}^p.
\end{split}
\end{equation}
Again, using Lemma \ref{sfga} with respect to the variable $t$, it follows that
\begin{align*}
& \big\||B(\cdot,1)|^{-s/n} \varphi_0(\sqrt{\mathscr{L}})f(\cdot)\big\|_{L^{p}(M)}^p  + \left\| \left\{  \big\||B(\cdot,2^{-j})|^{-s/n} \varphi(2^{-j}\sqrt{\mathscr{L}})f(\cdot)\big\|_{L^{p}(M)}^p \right\}_{j=1}^\infty \right\|_{\ell^{q/p}} \\
& \lesssim \left\|\big[\omega_{0}(\sqrt{\mathscr{L}})\big]^{\ast}_{a, -s/n}f\right\|_{L^p(M)}^p  +\left\|\left\{\left\| \left\|\big[\omega(2^{-j}t^{1/2}\sqrt{\mathscr{L}})\big]^{\ast}_{a, -s/n}f\right\|_{L^p(M)}^p \right\|_{L^{q/p}([1,4],dt/t)}  \right\}_{j=1}^{\infty} \right\|_{\ell^{q/p}}.
\end{align*}
From this we can also get \eqref{pmg} by direct computation.

\medskip

{\it Step 2.}
We show that if $a > \frac{n+n'}{\min\{p,q\}}$ then for $f \in \mathcal{S}_{\mathscr{L}}'$,
\begin{equation} \label{bess}
\begin{split}
& \left\|\big[\omega_{0}(\sqrt{\mathscr{L}})\big]^{\ast}_{a, -s/n}f \right\|_{L^{p}} +   \left( \int_{0}^{1}
\left\|\big[ \omega(t^{1/2}\sqrt{\mathscr{L}})\big]^{\ast}_{a, -s/n}f \right\|_{L^p}^{q}\frac{dt}{t}\right)^{1/q}\\
& \lesssim \big\||B(\cdot, 1)|^{-s/n}\omega_{0}(\sqrt{\mathscr{L}})f (\cdot)\big\|_{L^{p}} +   \left( \int_{0}^{1}
\big\| |B(\cdot, t^{1/2})|^{-s/n} \omega(t^{1/2}\sqrt{\mathscr{L}})f(\cdot)  \big\|_{L^p}^q \frac{dt}{t}\right)^{1/q}
\end{split}
\end{equation}
and
{\small \begin{equation} \label{lkjh}
\begin{split}
& \left\|\big[\omega_{0}(\sqrt{\mathscr{L}})\big]^{\ast}_{a, -s/n}f \right\|_{L^{p}} + \left\|  \left( \int_{0}^{1}
\left|\big[ \omega(t^{1/2}\sqrt{\mathscr{L}})\big]^{\ast}_{a, -s/n}f \right|^{q}\frac{dt}{t}\right)^{1/q}\right\|_{L^{p}}\\
& \lesssim \big\||B(\cdot, 1)|^{-s/n}\omega_{0}(\sqrt{\mathscr{L}})f (\cdot)\big\|_{L^{p}} + \left\|  \left( \int_{0}^{1}
 |B(\cdot, t^{1/2})|^{-sq/n} \big|\omega(t^{1/2}\sqrt{\mathscr{L}})f(\cdot) \big|^{q}\frac{dt}{t}\right)^{1/q}\right\|_{L^{p}}.
\end{split}
\end{equation}
It should be} mentioned that these inequalities do not follow from Lemma \ref{ptre}, since $\omega_{0}$ and $\omega$
do not have compact support.

We shall only give the details of the proof of \eqref{lkjh}; the proof of \eqref{bess} is analogous and will be omitted.

To prove \eqref{lkjh} we will use Lemma \ref{moui}.
Let $a >\frac{n+n'}{\min\{p,q\}}$. Let $r$ be a positive number satisfying $ar > n+ n'$ and $r <\min\{p,q\}$,
and let $N \in \mathbb{N}$ be sufficiently large such that
\begin{align} \label{qyy}
Nr - |s|rn'/n \geq ar \quad \mbox{and} \quad   2Nr-|s|r-n >0.
\end{align}

Let $\ell \geq 1$ and $t \in [1,4]$. Replacing $x$ by $y$ in \eqref{cen222}, then multiplying on both sides by
\begin{align*}
|B(y, 2^{-\ell}t^{1/2})|^{-sr/n}(1 + 2^{\ell} t^{-1/2}
\rho(x,y))^{-ar},
\end{align*}
and using the inequalities
\begin{align*}
|B(y, 2^{-\ell}t^{1/2})|^{-sr/n} &\lesssim |B(z, 2^{-\ell}t^{1/2})|^{-sr/n}(1 + 2^{\ell}t^{-1/2}\rho(y,z))^{|s|rn'/n}  \\
&\lesssim 2^{j|s|r}|B(z, 2^{-(j+\ell)}t^{1/2})|^{-sr/n}(1 + 2^{\ell}\rho(y,z))^{|s|rn'/n},
\end{align*}
$|B(z, 2^{-(j+ \ell)})|^{-1} \lesssim 2^{nj} |B(z,2^{-\ell})|^{-1}$ and \eqref{qyy}, we obtain
\begin{equation} \label{cen444}
\begin{split}
&\frac{|B(y, 2^{-\ell}t^{1/2})|^{-sr/n}\big|\omega(2^{-\ell}t^{1/2}f(y)\big|^{r}}{(1+2^{\ell}t^{-1}\rho(x,y))^{ar}} \\
&\lesssim \sum_{j=0}^{\infty} 2^{-(2Nr-|s|r-n) j}
\int_{M}  \frac{|B(z, 2^{-(j+\ell)}t^{1/2})|^{-sr/n}|\omega(2^{-(j+\ell)}t^{1/2}\sqrt{\mathscr{L}})f(z)|^{r}}
{{|B(z,2^{-\ell})|}(1+2^{\ell}\rho(y,z))^{ar}(1 +2^{\ell}t^{-1/2}\rho(x,y))^{ar}} d\mu(z).
\end{split}
\end{equation}
Taking the supremum over $y \in M$ on both sides, and using the fundamental inequality
\begin{align*}
(1+2^{\ell}\rho(y,z))^{ar}(1 + 2^{\ell}t^{-1/2}\rho(x,y))^{ar} \geq C (1 + 2^{\ell}\rho(x,z))^{ar}, \quad \forall \ell \geq 1, \ \forall t \in [1,4],
\end{align*}
we arrive at
\begin{equation} \label{cen4442}
\begin{split}
&\left|\big[ \omega(2^{-\ell}t^{1/2}\sqrt{\mathscr{L}})\big]^{\ast}_{a, -s/n}f(x)\right|^{r} \\
&\lesssim \sum_{j=0}^{\infty} 2^{-(2Nr-|s|r-n) j}
\int_{M}  \frac{|B(z, 2^{-(j+\ell)}t^{1/2})|^{-sr/n}|\omega(2^{-(j+\ell)}t^{1/2}\sqrt{\mathscr{L}})f(z)|^{r}}
{{|B(z,2^{-\ell})|}(1+2^{\ell}\rho(x,z))^{ar}} d\mu(z) \\
&= \sum_{j=\ell}^{\infty} 2^{-(2Nr-|s|r-n) (j -\ell)}
\int_{M}  \frac{|B(z, 2^{-j}t^{1/2})|^{-sr/n}|\omega(2^{-j}t^{1/2}\sqrt{\mathscr{L}})f(z)|^{r}}
{{|B(z,2^{-\ell})|}(1+2^{\ell}\rho(x,z))^{ar}} d\mu(z)  \\
&\leq \sum_{j=1}^{\infty} 2^{-(2Nr-|s|r-n) |j -\ell|}
\int_{M}  \frac{|B(z, 2^{-j}t^{1/2})|^{-sr/n}|\omega(2^{-j}t^{1/2}\sqrt{\mathscr{L}})f(z)|^{r}}
{{|B(z,2^{-\ell})|}(1+2^{\ell}\rho(x,z))^{ar}} d\mu(z) .
\end{split}
\end{equation}

Analogously, we can deduce from \eqref{cen333} in Lemma \ref{moui} the following estimate for the inhomogeneous term:
\begin{equation} \label{cen555}
\begin{split}
&\left|\big[ \omega_{0}(\sqrt{\mathscr{L}})\big]^{\ast}_{a, -s/n}f(x)\right|^{r} \\
 &\lesssim   \int_{M} \frac{|B(z, 1)|^{-sr/n}|\omega_{0}(\sqrt{\mathscr{L}})f(z)|^{r}}{|B(z,1)|(1+\rho(x,z))^{ar}} d\mu(z)  \\
& \quad  + \sum_{j=1}^{\infty} 2^{-(2Nr -|s|r-n) j }\int_{M}
\frac{|B(z,2^{-j}t^{1/2})|^{-sr/n} |\omega(2^{-j}t^{1/2}\sqrt{\mathscr{L}})f(z)|^{r}}{|B(z,1)|(1+\rho(x,z))^{ar}} d\mu(z) .
\end{split}
\end{equation}

Taking the norm $(\int_{1}^{4} |\cdot|^{q/r}\frac{dt}{t})^{r/q}$ on both sides of \eqref{cen555} (noting that the left-hand
side is independent of $t$), and using Minkowski's
inequality and Lemma \ref{maxxx}, we obtain
\begin{align*}
&\left|\big[ \omega_{0}(\sqrt{\mathscr{L}})\big]^{\ast}_{a, -s/n}f(x)\right|^{r} \\
&\lesssim \int_{M} \frac{|B(z,1)|^{-sr/n}|\omega_{0}(\sqrt{\mathscr{L}})f(z)|^{r}}{|B(z,1)|(1+\rho(x,z))^{ar}} d\mu(z)\\
& \quad  + \sum_{j=1}^{\infty} 2^{-(2Nr-|s|r-n) j} \int_{M}
\frac{\big(\int_{1}^{4}|B(z,2^{-j}t^{1/2})|^{-sq/n}|\omega(2^{-j}t^{1/2}\sqrt{\mathscr{L}})f(z)|^{q}
\frac{dt}{t}\big)^{r/q}}{|B(z,1)|(1+\rho(x,z))^{ar}} d\mu(z)\\
& \lesssim \mathcal{M} \left[ |B(\cdot, 1)|^{-sr/n} |\omega_0(\sqrt{\mathscr{L}})f(\cdot)|^r \right](x) \\
& \quad + \sum_{j=1}^{\infty} 2^{-(2Nr -|s|r -n)j} \mathcal{M} \left[  \left( \int_1^4 |B(\cdot,
2^{-j} t^{1/2}) |^{-sq/n} |\omega(2^{-j} t^{1/2} \sqrt{\mathscr{L}})f(\cdot)|^q \frac{dt}{t} \right)^{r/q} \right](x).
\end{align*}
Since $p/r >1$, $q /r >1$ and $2Nr-|s|r-n >0$, the last estimate along with Lemma \ref{osigg} and Lemma \ref{fsv} yields that
\begin{equation} \label{4967}
\begin{split}
&\left\|\big[ \omega_{0}(\sqrt{\mathscr{L}})\big]^{\ast}_{a, -s/n}f \right\|_{L^{p}} = \left\|\left|\big[\omega_{0}(\sqrt{\mathscr{L}})\big]^{\ast}_{a, -s/n}f \right|^{r}\right\|_{L^{p/r}}^{1/r} \\
&\lesssim \left\| \mathcal{M} \left[ |B(\cdot, 1)|^{-sr/n} |\omega_0(\sqrt{\mathscr{L}})f(\cdot)|^r \right]\right\|_{L^{p/r}}^{1/r} \\
&\quad +\left\| \left\| \left\{ \mathcal{M} \left[  \left( \int_1^4 |B(\cdot,
2^{-j} t^{1/2}) |^{-sq/n} |\omega(2^{-j} t^{1/2} \sqrt{\mathscr{L}})f(\cdot)|^q \frac{dt}{t} \right)^{r/q} \right]\right\}_{j=1}^{\infty}  \right\|_{\ell^{q/r}}\right\|_{L^{p/r}}^{1/r}  \\
&\lesssim \big\||B(\cdot, 1)|^{-sr/n} |\omega_0(\sqrt{\mathscr{L}})f(\cdot)|^r \big\|_{L^{p/r}}^{1/r}  \\
&\quad +\left\| \left\| \left\{ \left( \int_1^4 |B(\cdot,
2^{-j} t^{1/2}) |^{-sq/n} |\omega(2^{-j} t^{1/2} \sqrt{\mathscr{L}})f(\cdot)|^q \frac{dt}{t} \right)^{r/q} \right\}_{j=1}^{\infty}  \right\|_{\ell^{q/r}}\right\|_{L^{p/r}}^{1/r} \\
&=  \big\||B(\cdot, 1)|^{-s/n}\omega_{0}(\sqrt{\mathscr{L}})f(\cdot) \big\|_{L^{p}} \\
&\quad + \left\|  \left( \int_{0}^{1}
|B(\cdot, t^{1/2})|^{-sq/n} |\omega(t^{1/2}\sqrt{\mathscr{L}})f(\cdot)|^{q}\frac{dt}{t}\right)^{1/q}\right\|_{L^{p}}.
\end{split}
\end{equation}

Similarly (using Lemma \ref{sfga} instead of Lemma \ref{osigg}) we deduce from \eqref{cen4442} that
\begin{equation} \label{4968}
\begin{split}
 &\left\|  \left( \int_{0}^{1}\left|\big[ \omega(t^{1/2}\sqrt{\mathscr{L}})\big]^{\ast}_{a, -s/n}f \right|^{q}\frac{dt}{t}\right)^{1/q}\right\|_{L^{p}} \\
 &\quad\quad\quad\quad\quad\quad\quad\quad\quad  \lesssim  \left\|  \left( \int_{0}^{1}
|B(\cdot, t^{1/2})|^{-sq/n} |\omega(t^{1/2}\sqrt{\mathscr{L}})f(\cdot) |^{q}\frac{dt}{t}\right)^{1/q}\right\|_{L^{p}}.
\end{split}
\end{equation}

Combing \eqref{4967} and \eqref{4968} we obtain \eqref{lkjh}.

\medskip

\textit{Step 3.} We show that for any $a >0$,
 \begin{equation} \label{stepw}
\begin{split}
& \left\||B(\cdot, 1)|^{-s/n}\omega_{0}(\sqrt{\mathscr{L}})f(\cdot) \right\|_{L^{p}} +  \left( \int_{0}^{1}
\left\| |B(\cdot, t^{1/2})|^{-s/n} \omega(t^{1/2}\sqrt{\mathscr{L}})f (\cdot)\right\|_{L^p}^q \frac{dt}{t}\right)^{1/q} \\
& \lesssim  \left\|\big[\varphi_{0}(\sqrt{\mathscr{L}}) \big]^{\ast}_{a, -s/n}f\right\|_{L^{p}}  +
\left(\sum_{j =1}^{\infty} \left\| \big[\varphi(2^{-j}\sqrt{\mathscr{L}})\big]^{\ast}_{a, -s/n}f\right\|_{L^p}^{q}\right)^{1/q}
\quad  (\forall f \in \mathcal{S}_{\mathscr{L}}')
\end{split}
\end{equation}
and
{\small \begin{equation} \label{step}
\begin{split}
& \left\||B(\cdot, 1)|^{-s/n}\omega_{0}(\sqrt{\mathscr{L}})f(\cdot) \right\|_{L^{p}} + \left\|  \left( \int_{0}^{1}
 |B(\cdot, t^{1/2})|^{-sq/n}\big| \omega(t^{1/2}\sqrt{\mathscr{L}})f (\cdot)\big|^{q}\frac{dt}{t}\right)^{1/q}\right\|_{L^{p}} \\
& \lesssim  \left\|\big[\varphi_{0}(\sqrt{\mathscr{L}}) \big]^{\ast}_{a, -s/n}f\right\|_{L^{p}}  +
 \left\|\left(\sum_{j =1}^{\infty} \left| \big[\varphi(2^{-j}\sqrt{\mathscr{L}})\big]^{\ast}_{a, -s/n}f\right|^{q}\right)^{1/q}\right\|_{L^{p}}
 \quad  (\forall f \in \mathcal{S}_{\mathscr{L}}') .
\end{split}
\end{equation}}

We shall only give the details of the proof of \eqref{step}; the proof of \eqref{stepw} is analogous and will be omitted.

Applying the norm $\big(\int_1^4 |\cdot|^q\frac{dt}{t}\big)^{1/q}$
on both sides of \eqref{hrq} in Lemma \ref{Lemma3} (and noting that the right-hand side is independent of $t$) gives
\begin{equation} \label{yex}
\begin{split}
&\left(\int_{1}^{4}|B(x,2^{-\ell}t^{1/2})|^{-sq/n}|\omega(2^{-\ell}t^{1/2}\sqrt{\mathscr{L}})f(x) |^{q}  \frac{dt}{t}\right)^{1/q}\\
&\quad \lesssim 2^{-\ell \delta}\big[ \varphi_{0}(\sqrt{\mathscr{L}})\big]^{\ast}_{a,-s/n}f(x)
 + \sum_{j=1}^{\infty} 2^{-|j-\ell|\delta }\big[ \varphi(2^{-j}\sqrt{\mathscr{L}})\big]^{\ast}_{a,-s/n}f(x).
\end{split}
\end{equation}
We now put
\begin{align*}
\displaystyle G_{\ell } (x):= \begin{cases}
|B(x, 1)|^{-s/n}| \omega_{0}(\sqrt{\mathscr{L}})f(x)|, & \ell =0, \\
 \displaystyle \left(\int_{1}^{4}|B(x,2^{-\ell}t^{1/2})|^{-sq/n}|\omega(2^{-\ell}t^{1/2}\sqrt{\mathscr{L}})f(x) |^{q}  \frac{dt}{t}\right)^{1/q}, & \ell =1,2, \cdots
\end{cases}
\end{align*}
and
\begin{align*}
g_{j } (x):= \begin{cases}
\big[ \varphi_{0}(\sqrt{\mathscr{L}})\big]^{\ast}_{a,-s/n}f(x), & j =0, \\
\big[ \varphi(2^{-j}\sqrt{\mathscr{L}})\big]^{\ast}_{a,-s/n}f(x), & j =1,2, \cdots.
\end{cases}
\end{align*}
Then \eqref{inhomo} in Lemma \ref{Lemma3} coupled with \eqref{yex} implies
\begin{equation*}
G_{\ell} (x) \lesssim \sum_{j=0}^{\infty} 2^{-|j -\ell|\delta} g_{j}(x), \quad \ell =0,1,2,\cdots.
\end{equation*}
Applying Lemma \ref{sfga} in $L^{p}(\ell^{q})$ then yields \eqref{step}.

Now combining \eqref{pmg1}, \eqref{bess}, \eqref{stepw} and using Lemma \ref{ptre} (i) yields \eqref{hqy1},
while combining \eqref{pmg}, \eqref{lkjh}, \eqref{step} and using Lemma \ref{ptre} (ii) yields \eqref{hqy2}.
The proof of Theorem \ref{heat} is thus complete.
\end{proof}

\section{Proof of Theorem \ref{area}} \label{sec5}

We shall only give the proof of the quasi-norm equivalence \eqref{lusin2} for ``nonclassical'' spaces; the proof of \eqref{lusin1}
for ``classical'' spaces is easier and will be omitted.

The key step to prove \eqref{lusin2} is the following estimate.
\begin{lemma} \label{0962}
Let $s \in \mathbb{R}$, $0<p< \infty$, $0<q\leq \infty$ and $a > \frac{2n+2n'+1}{\min\{p,q\}}$. Then for $f \in \mathcal{S}_{\mathscr{L}}'$,
\begin{equation} \label{lu}
\begin{split}
& \left\|\left(\int_{0}^{1}\left|\big[ \omega(t^{1/2}\sqrt{\mathscr{L}})\big]^{\ast}_{a, -s/n}f\right|^{q} \frac{dt}{t} \right)^{1/q} \right\|_{L^{p}} \\
& \lesssim \left\| \left(\iint_{\Gamma^{loc}(\cdot)}  |B(z,t^{1/2})|^{-sq/n}|\omega(t^{1/2}\sqrt{\mathscr{L}})f(z)|^{q}
 \frac{d\mu(z)}{{|B(\cdot, t^{1/2})|}} \frac{dt}{t}\right)^{1/q} \right\|_{L^{p}},
 \end{split}
\end{equation}
where $\omega$ is defined by \eqref{uytr} and $\Gamma^{loc}(x):=\{(y,t) \in M \times (0,1] :  \rho(x,y) <t^{1/2}\}$.
\end{lemma}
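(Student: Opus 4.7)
The plan is to establish the pointwise bound
\[
\big|\big[\omega(t^{1/2}\sqrt{\mathscr{L}})\big]^{\ast}_{a,-s/n}f(x)\big|^r \lesssim \sum_{j \geq 0} 2^{-\delta j}\mathcal{M}(H_{2^{-j}t^{1/2}})(x),
\]
where I fix $r < \min\{p,q\}$ with $ar > 2n+2n'+1$ (possible by the hypothesis on $a$), set $h_\tau(z) := |B(z,\tau)|^{-sr/n}|\omega(\tau\sqrt{\mathscr{L}})f(z)|^r$ and $H_\tau(x) := |B(x,\tau)|^{-1}\int_{B(x,\tau)} h_\tau(z)\,d\mu(z)$, and $\delta$ is made positive by choosing the integer $N$ in Lemma \ref{moui} large. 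The function $H_\tau$ is precisely the $r$-rooted slice of the local Lusin area integrand at aperture $\tau=t^{1/2}$, so once the pointwise bound is secured, the proof will be completed by the Fefferman--Stein vector-valued maximal inequality together with Jensen's inequality.

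To prove the pointwise bound, apply Lemma \ref{moui} at scale $t^{1/2}$ (the statement covers all scales in $(0,1]$ up to the harmless factor $t \in [1,4]$) to obtain
\[
|\omega(t^{1/2}\sqrt{\mathscr{L}})f(y)|^r \lesssim \sum_{j \geq 0}2^{-2Nrj}\int_M \frac{|\omega(2^{-j}t^{1/2}\sqrt{\mathscr{L}})f(z)|^r}{|B(z,2^{-j}t^{1/2})|(1+t^{-1/2}\rho(y,z))^{Nr}}\,d\mu(z),
\]
then multiply by $|B(y,t^{1/2})|^{-sr/n}(1+t^{-1/2}\rho(x,y))^{-ar}$ and take the supremum over $y$. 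Using \eqref{iuy} together with $|B(z,t^{1/2})| \leq C2^{jn}|B(z,2^{-j}t^{1/2})|$ to swap $|B(y,t^{1/2})|^{-sr/n}$ for $|B(z,2^{-j}t^{1/2})|^{-sr/n}$ (at the cost of $2^{|s|jr}(1+t^{-1/2}\rho(y,z))^{|s|rn'/n}$), and absorbing the Peetre weight via $(1+t^{-1/2}\rho(x,y))^{ar}(1+t^{-1/2}\rho(y,z))^{ar}\gtrsim (1+t^{-1/2}\rho(x,z))^{ar}$, one arrives at $|[\omega(t^{1/2}\sqrt{\mathscr{L}})]^{\ast}_{a,-s/n}f(x)|^r \lesssim \sum_{j\geq 0} 2^{-\delta_0 j} I_j(x,t)$ with
\[
I_j(x,t) := \int_M \frac{h_\tau(z)}{|B(z,\tau)|(1+t^{-1/2}\rho(x,z))^{ar}}\,d\mu(z),\qquad \tau := 2^{-j}t^{1/2},
\]
and $\delta_0 > 0$ for $N$ large. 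Now decompose $M$ into dyadic annuli around $x$ at scale $\tau$: on each annulus the decay factor $(1+t^{-1/2}\rho(x,z))^{-ar}$ together with \eqref{iuy} and the doubling condition bounds the contribution by $2^{k(n+n')}$ (resp. $2^{k(n+n')-(k-j)ar}$ when $k>j$) times a ball-average $|B(x,2^k\tau)|^{-1}\int_{B(x,2^k\tau)}h_\tau\,d\mu$. The crucial Fubini observation is that for any $R\geq\tau$,
\[
\frac{1}{|B(x,R)|}\int_{B(x,R)}h_\tau\,d\mu \lesssim \mathcal{M}(H_\tau)(x),
\]
which follows by expanding $H_\tau(y) = |B(y,\tau)|^{-1}\int_{B(y,\tau)}h_\tau\,d\mu$ inside $\int_{B(x,2R)}H_\tau(y)\,d\mu(y)$, interchanging the order of integration, and noting that $|B(y,\tau)|\sim|B(z,\tau)|$ whenever $\rho(y,z)<\tau$ by doubling. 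Summing the annular contributions, convergent because $ar > n+n'$, yields $I_j(x,t) \lesssim 2^{j(n+n')}\mathcal{M}(H_\tau)(x)$; choosing $N$ so that $\delta := \delta_0 - (n+n') > 0$ produces the displayed pointwise bound.

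To finish, raise both sides to the $(q/r)$-th power (this is legitimate since $q/r\geq 1$, via $(\sum\alpha_j a_j)^{q/r}\lesssim\sum\alpha_j a_j^{q/r}$ when $\sum\alpha_j<\infty$), integrate over $t\in(0,1]$, and in each summand perform the change of variables $t' = 4^{-j}t$ so that $2^{-j}t^{1/2}=t'^{1/2}$ and the $t$-range $(0,1]$ is mapped into $(0,4^{-j}]\subset(0,1]$; this reduces the left-hand side of \eqref{lu} to a constant multiple of $\|(\int_0^1[\mathcal{M}(H_{t^{1/2}})]^{q/r}\frac{dt}{t})^{1/q}\|_{L^p}$. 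The Fefferman--Stein vector-valued maximal inequality (Lemma \ref{fsv}) on $L^{p/r}(\ell^{q/r})$ removes $\mathcal{M}$, and finally Jensen's inequality applied to the convex function $s\mapsto s^{q/r}$ bounds $H_{t^{1/2}}(x)^{q/r}$ by $|B(x,t^{1/2})|^{-1}\int_{B(x,t^{1/2})}|B(z,t^{1/2})|^{-sq/n}|\omega(t^{1/2}\sqrt{\mathscr{L}})f(z)|^q\,d\mu(z)$, exactly the integrand of the local Lusin area function. The main obstacle is the Fubini/annular-decomposition step: a direct application of Lemma \ref{maxxx} would only give $I_j(x,t) \lesssim \mathcal{M}(h_\tau)(x)$, yielding the (weaker) continuous Littlewood--Paley characterization; replacing $h_\tau$ by the cone average $H_\tau$ inside the maximal function is what unlocks the Lusin area version, and requires the careful interchange of integration orders together with full use of doubling.
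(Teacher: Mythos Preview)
Your argument is correct and arrives at the same conclusion as the paper, but organises the key ideas in a different order. Both proofs start from the sub-mean-value estimate of Lemma~\ref{moui} (equivalently, its consequence \eqref{cen444}) and both introduce the cone structure by a Fubini device; the difference is \emph{when} the passage from exponent $r$ to exponent $q$ happens. The paper first inserts the identity $g(z)=\int_M g(z)\,|B(z,u)|^{-1}\chi_{B(y,u)}(z)\,d\mu(y)$ with $u=2^{-(j+\ell)}t^{1/2}$, then immediately applies H\"older in $z$ (exponents $q/r$ and $q/(q-r)$) so that the $L^q$ cone average appears directly, and only afterwards uses Lemma~\ref{maxxx} on the outer $y$-integral. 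You instead keep the $L^r$ cone average $H_\tau$, bound $I_j\lesssim 2^{j(n+n')}\mathcal{M}(H_\tau)$ via the annular decomposition together with the clean observation $|B(x,R)|^{-1}\int_{B(x,R)}h_\tau\,d\mu\lesssim\mathcal{M}(H_\tau)(x)$ (your Fubini step), and defer the upgrade from $r$ to $q$ to a single Jensen inequality at the very end. A pleasant byproduct of your arrangement is that the annular sum only requires $ar>n+n'$, so your proof in fact goes through under the weaker hypothesis $a>(n+n')/\min\{p,q\}$; it is the paper's H\"older split into $H_1,H_2$ that forces $ar>2n+2n'+1$. The one place where you tacitly go beyond the tools stated in the paper is the final Fefferman--Stein step, which you apply to the continuously indexed family $\{H_{t^{1/2}}\}_{t\in(0,1]}$ while Lemma~\ref{fsv} is stated for sequences; this is harmless, since the continuous version is equally standard, or one may simply discretise $t$ dyadically (as the paper does throughout) before invoking Lemma~\ref{fsv}.
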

\begin{proof}
Let $r$ be a positive number such that $r < \min\{p,q\}$ and $ar >2n + 2n' +1$.
Let $N \in \mathbb{N}$ such that $Nr - |s|rn'/n \geq ar$ and $2Nr-|s|r-2n>0$.

Note that for any integrable function $g$ on $M$ and any $u >0$, by Fubini's theorem we have
\begin{align*}
\int_{M}g(z)d\mu(z)&=\int_{M}g(z)\left(|B(z,u)|^{-1} \int_{M} \chi_{B(z,u)}(y)d\mu(y)\right)d\mu(z) \\
& = \int_{M}\left( \int_{M} g(z)|B(z,u)|^{-1} \chi_{B(y,u)}(z)d\mu(z)\right)d\mu(y).
\end{align*}
Using this identity, we can write \eqref{cen444} as follows:  for all $\ell \geq 1$ and $t \in [1,4]$,
\begin{equation} \label{hushu}
\begin{split}
&\left|\big[ \omega(2^{-\ell}t^{1/2}\sqrt{\mathscr{L}})\big]^{\ast}_{a, -s/n}f(x)\right|^{r} \\
&\lesssim \sum_{j=0}^{\infty} 2^{-(2Nr-|s|r -n) j}
\int_{M}\left(\int_{M}   \frac{|B(z, 2^{-(j+\ell)}t^{1/2})|^{-sr/n}|\omega(2^{-(j+\ell)}t^{1/2}\sqrt{\mathscr{L}})f(z)|^{r}}
{{|B(z,2^{-\ell})|}(1+2^{\ell}\rho(x,z))^{ar}} \right.  \\
&\ \ \ \ \ \ \ \ \ \ \ \ \ \ \ \ \ \ \ \ \ \ \ \ \ \ \ \ \ \ \ \ \ \ \ \ \ \ \ \ \ \ \ \ \ \ \ \ \ \ \ \ \ \ \ \
\left. \times \frac{\chi_{B(y,2^{-(j+\ell)}t^{1/2})}(z)}
{|B(z,2^{-(j+\ell)}t^{1/2})| } d\mu(z)\right) d\mu(y).
\end{split}
\end{equation}
Set
\begin{equation*}
H_{1}(x,y,z) := \frac{|B(z, 2^{-(j+\ell)}t^{1/2})|^{-sr/n}|\omega(2^{-(j+\ell)}t^{1/2}\sqrt{\mathscr{L}})f(z)|^{r}\chi_{B(y,2^{-(j+\ell)}t^{1/2})}(z)}
{|B(z,2^{-\ell})|^{r/q}(1+2^{\ell}\rho(x,z))^{ar- (n+n'+1)}|B(z,2^{-(j+\ell)}t^{1/2})|}
\end{equation*}
and
\begin{equation*}
H_{2}(x,z) := \frac{1}
{|B(z,2^{-\ell})|^{(q-r)/q}(1+2^{\ell}\rho(x,z))^{n+n'+1}}
\end{equation*}
Then from \eqref{hushu}, H\"{o}lder's inequality and \eqref{ges}, it follows that
{\small \begin{align*}
&\left|\big[ \omega(2^{-\ell}t^{1/2}\sqrt{\mathscr{L}})\big]^{\ast}_{a, -s/n}f(x)\right|^{r} \\
&\lesssim \sum_{j=0}^{\infty} 2^{-(2Nr-|s|r -n) j} \int_{M} \left( \int_{M}H_{1} (x,y,z)H_{2}(x,z) d\mu(z)\right)d\mu(y) \\
&\lesssim \sum_{j=0}^{\infty} 2^{-(2Nr-|s|r -n) j} \int_{M} \left( \int_{M} |H_{1} (x,y,z)|^{\frac{q}{r}}d\mu(z)\right)^{\frac{r}{q}}
\left(\int_{M}|H_{2}(x,z)|^{\frac{q}{q-r}} d\mu(z)\right)^{\frac{q-r}{q}}d\mu(y) \\
&\lesssim \sum_{j=0}^{\infty} 2^{-(2Nr -|s|r-n)j}
\int_{M}\left(\int_{M}   \frac{|B(z, 2^{-(j+\ell)}t^{1/2})|^{-sq/n}|\omega(2^{-(j+\ell)}t^{1/2}\sqrt{\mathscr{L}})f(z)|^{q}}
{{|B(z,2^{-\ell})|}(1+2^{\ell}\rho(x,z))^{aq-(n+n'+1)q/r} }  \right.\\
& \ \ \ \ \ \ \ \ \ \ \ \ \ \ \ \ \  \ \ \ \ \ \ \ \ \ \ \ \ \ \ \ \ \
 \left. \times \frac{\chi_{B(y,2^{-(j+\ell)}t^{1/2})}(z) }{ |B(z,2^{-(j+\ell)}t^{1/2})|^{q/r} }
 d\mu(z)\right)^{r/q} d\mu(y), \quad \forall \ell \geq 1, \ t \in [1,4].
\end{align*}}
Taking the norm $\big(\int_{1}^{4} |\cdot|^{q/r}\frac{dt}{t}\big)^{r/q}$ on both sides,
and using Minkowski's inequality,  we get
{\small \begin{align} \label{0852}
&  \left(\int_{1}^{4}\left|\big[ \omega(2^{-\ell}t^{1/2}\sqrt{\mathscr{L}})\big]^{\ast}_{a, -s/n}f(x)\right|^{q}
\frac{dt}{t} \right)^{r/q} \nonumber\\
&\lesssim \sum_{j=0}^{\infty} 2^{-(2Nr-|s|r-n) j}
\int_{M}\left[\left(\int_{1}^{4}\int_{M}   \frac{|B(z, 2^{-(j+\ell)}t^{1/2})|^{-sq/n}|\omega(2^{-(j+\ell)}t^{1/2}
\sqrt{\mathscr{L}})f(z)|^{q}}
{{|B(z,2^{-\ell})|}(1+2^{\ell}\rho(x,z))^{aq- (n+n'+1)q/r}} \right.\right.   \nonumber \\
& \ \ \ \ \ \ \ \ \ \ \ \ \ \ \ \ \ \ \ \ \ \ \ \ \ \ \ \ \ \ \ \  \ \
\left.\left.\times \frac{  \chi_{B(y,2^{-(j+\ell)}t^{1/2})}(z)  }{  |B(z,2^{-(j+\ell)}t^{1/2})|^{q/r}  }
d\mu(z) \frac{dt}{t}\right)^{r/q}\right] d\mu(y)\\
&= \sum_{j=0}^{\infty} 2^{-(2Nr-|s|r-n) j} \int_{M}\left[\left(\int_{1}^{4}\int_{\rho(y,z)<2^{-(j+\ell)}t^{1/2}}\frac{ |B(z, 2^{-(j+\ell)}t^{1/2})|^{-sq/n}  }{ |B(z,2^{-\ell})|(1+2^{\ell}\rho(x,z))^{aq- (n+n'+1)q/r}  } \right.\right.  \nonumber\\
 & \quad\quad\quad\quad\quad\quad\quad\quad\quad\quad\quad\quad \times
 \left.\left.\frac{|\omega(2^{-(j+\ell)}t^{1/2}\sqrt{\mathscr{L}})f(z)|^{q}}
{|B(z,2^{-(j+\ell)}t^{1/2})|^{(q/r)-1} }\frac{d\mu(z)}{|B(z,2^{-(j+\ell)}t^{1/2})|  } \frac{dt}{t}\right)^{r/q}\right] d\mu(y). \nonumber
 \end{align}
Note} that if $\rho(y,z)<2^{-(j+\ell)}t^{1/2}$ then
\begin{equation*}
\frac{1}{(1 + 2^{\ell}(x,z))^{aq -(n+n'+1)q/r}} \lesssim \frac{(1 + 2^{\ell}\rho(y,z))^{aq -(n+n'+1)q/r}}
{(1 + 2^{\ell}\rho(x,y))^{aq -(n+n'+1)q/r}} \lesssim
\frac{1}{(1 + 2^{\ell}\rho(x,y))^{aq -(n+n'+1)q/r}}.
\end{equation*}
Also note that for all $\ell \geq 1$ and $t \in [1,4]$,
\begin{align*}
\frac{1}{|B(z,2^{-\ell})||B(z,2^{-(j+\ell)}t^{1/2})|^{(q/r)-1}}&\lesssim \frac{2^{[(q/r)-1]nj}}{|B(z,2^{-\ell})|^{q/r}}\\
& \lesssim
\frac{2^{[(q/r)-1]nj}(1 + 2^{\ell}\rho(y,z))^{n'q/r}}{|B(y,2^{-\ell})|^{q/r}} \lesssim \frac{2^{(q/r)nj}}{|B(y,2^{-\ell})|^{q/r}}.
\end{align*}
Inserting these estimates into \eqref{0852},  we get
{\small \begin{align*}
&\left(\int_{1}^{4}\left|\big[ \omega(2^{-\ell}t^{1/2}\sqrt{\mathscr{L}})\big]^{\ast}_{a, -s/n}f(x)\right|^{q}
\frac{dt}{t} \right)^{r/q} \\
&\lesssim  \sum_{j=0}^{\infty} 2^{-(2Nr-|s|r-n) j}2^{nj} \int_{M}
\left[\left(\int_{1}^{4}\int_{\rho(y,z)<2^{-(j+\ell)}t^{1/2}} |B(z, 2^{-(j+\ell)}t^{1/2})|^{-sq/n}|\omega(2^{-(j+\ell)}t^{1/2}\sqrt{\mathscr{L}})f(z)|^{q}\right.\right.    \\
& \quad\quad\quad\quad\quad\quad\quad\quad\quad\quad\quad\quad
 \times \left.\left.
\frac{d\mu(z)}{|B(z,2^{-(j+\ell)}t^{1/2})|  } \frac{dt}{t} \right)^{r/q} \right]   \frac{1}{|B(y,2^{-\ell})|
(1 +2^{\ell}\rho(x,y))^{ar -(n+n'+1)} }d\mu(y).
\end{align*}
Since} $ar - (n + n'+ 1) > n +n'$, we further apply Lemma \ref{maxxx} to conclude that
for all $\ell \geq 1$,
{\small \begin{align*}
& \left(\int_{1}^{4}\left|\big[ \omega(2^{-\ell}t^{1/2}\sqrt{\mathscr{L}})\big]^{\ast}_{a, -s/n}f(x)\right|^{q}
\frac{dt}{t} \right)^{r/q} \\
&\lesssim \sum_{j=0}^{\infty} 2^{-(2Nr-|s|r-2n) j}
\mathcal{M} \left[\left(\int_{1}^{4}\int_{\rho(\cdot,z)<2^{-(j+\ell)}t^{1/2}}   |B(z, 2^{-(j+\ell)}t^{1/2})|^{-sq/n}|
\omega(2^{-(j+\ell)}t^{1/2}\sqrt{\mathscr{L}})f(z)|^{q} \right.\right.\\
&  \quad\quad\quad\quad\quad\quad\quad\quad\quad\quad\quad\quad\quad\quad\quad   \times
\left.\left.\frac{d\mu(z)}{|B(z,2^{-(j+\ell)}t^{1/2})|  } \frac{dt}{t} \right)^{r/q}  \right](x)\\
&\leq \sum_{j=1}^{\infty} 2^{-|j -\ell|\delta}
\mathcal{M} \left[\left(\int_{1}^{4}\int_{\rho(\cdot,z)<2^{-j}t^{1/2}}   |B(z, 2^{-j}t^{1/2})|^{-sq/n}|
\omega(2^{-j}t^{1/2}\sqrt{\mathscr{L}})f(z)|^{q} \frac{d\mu(z)}{|B(z,2^{-j}t^{1/2})|  } \frac{dt}{t} \right)^{r/q}  \right](x),
\end{align*}
where} $\delta:= 2Nr-|s|r-2n>0$.
Then, since $p/r >1$, $q/r >1$ and $2Nr-|s|r-2n>0$, we apply
Lemma \ref{sfga} in $L^{p/r}(\ell^{q/r})$ and Lemma \ref{fsv}, to get
{\small \begin{align*}
&\left\|\left(\int_{0}^{1}\left|\big[ \omega(t^{1/2}\sqrt{\mathscr{L}})\big]^{\ast}_{a, -s/n}f\right|^{q}
 \frac{dt}{t} \right)^{1/q} \right\|_{L^{p}} \\
&= \left\|\left\{ \left(\int_{1}^{4}\left|\big[ \omega(2^{-\ell}t^{1/2}\sqrt{\mathscr{L}})
\big]^{\ast}_{a, -s/n}f(x)\right|^{q} \frac{dt}{t} \right)^{r/q} \right\}_{\ell =1}^{\infty}
 \right\|_{L^{p/r}(\ell^{q/r})}^{1/r} \\
& \lesssim \left\|\left\{ \mathcal{M} \left[\left(\int_{1}^{4}\int_{\rho(\cdot,z)<2^{-j}t^{1/2}}   |B(z, 2^{-j}t^{1/2})|^{-sq/n}|
\omega(2^{-j}t^{1/2}\sqrt{\mathscr{L}})f(z)|^{q} \frac{d\mu(z)}{|B(z,2^{-j}t^{1/2})|  } \frac{dt}{t} \right)^{r/q}
\right] \right\}_{j =1}^{\infty}  \right\|_{L^{p/r}(\ell^{q/r})}^{1/r} \\
& \lesssim \left\|\left\{ \left(\int_{1}^{4}\int_{\rho(\cdot,z)<2^{-j}t^{1/2}}   |B(z, 2^{-j}t^{1/2})|^{-sq/n} |
\omega(2^{-j}t^{1/2}\sqrt{\mathscr{L}})f(z)|^{q} \frac{d\mu(z)}{|B(z,2^{-j}t^{1/2})|  }
\frac{dt}{t} \right)^{r/q} \right\}_{j =1}^{\infty}  \right\|_{L^{p/r}(\ell^{q/r})}^{1/r} \\
%& = \left\|\left(\sum_{j=1}^{\infty}  \int_{1}^{4}\int_{\rho(\cdot,z)<2^{-j}t^{1/2}}
%  |B(z, 2^{-j}t^{1/2})|^{-sq/n} |
%\omega(2^{-j}t^{1/2}\sqrt{\mathscr{L}})f(z)|^{q} \frac{d\mu(z)}{|B(z,2^{-j}t^{1/2})|  }
%\frac{dt}{t}   \right)^{r/q}  \right\|_{L^{p/r}}^{1/r} \\
& = \left\|\left(\sum_{j=1}^{\infty}  \int_{2^{-2j}}^{2^{-2(j-1)}}\int_{\rho(\cdot,z)<t^{1/2}}
   |B(z, t^{1/2})|^{-sq/n} |
\omega(t^{1/2}\sqrt{\mathscr{L}})f(z)|^{q} \frac{d\mu(z)}{|B(z,t^{1/2})|  }
\frac{dt}{t}   \right)^{r/q}  \right\|_{L^{p/r}}^{1/r} \\
& =\left\| \left(\iint_{\Gamma^{loc}(\cdot)}  |B(z,t^{1/2})|^{-sq/n}|\omega(t^{1/2}\sqrt{\mathscr{L}})f(z)|^{q}
 \frac{d\mu(z)}{{|B(z, t^{1/2})|}} \frac{dt}{t}\right)^{1/q} \right\|_{L^{p}}.
\end{align*}
This} implies \eqref{lu} since $|B(z, t^{1/2})| \sim |B(x, t^{1/2})|$ when $\rho(x,z) < t^{1/2}$.
The proof of Lemma \ref{0962} is complete.
\end{proof}

The next estimate is a converse of the one stated in the previous lemma.
\begin{lemma} \label{0963}
Let $s \in \mathbb{R}$, $0<p< \infty$, $0<q\leq \infty$ and $a > 0$.  Then for $f \in \mathcal{S}_{\mathscr{L}}'$, we have
\begin{equation} \label{lu2}
\begin{split}
& \left\| \left(\iint_{\Gamma^{loc}(\cdot)}  |B(z,t^{1/2})|^{-sq/n}|\omega(t^{1/2}\sqrt{\mathscr{L}})f(z)|^{q}
 \frac{d\mu(z)}{{|B(\cdot, t^{1/2})|}} \frac{dt}{t}\right)^{1/q} \right\|_{L^{p}}\\
& \quad \lesssim \left\|\left(\int_{0}^{1}\left|\big[ \omega(t^{1/2}\sqrt{\mathscr{L}})\big]^{\ast}_{a, -s/n}f\right|^{q}
\frac{dt}{t} \right)^{1/q} \right\|_{L^{p}}.
 \end{split}
\end{equation}
\end{lemma}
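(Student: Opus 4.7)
The strategy is a direct pointwise domination argument; no maximal inequality or Fubini swap is required, because the tent region $\Gamma^{loc}(x)$ is exactly set up so that the Peetre denominator is trivially bounded on it.

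First I would fix $x\in M$ and unpack the Peetre maximal function. By its definition (with the natural scaling $t\mapsto t^{1/2}$), for every $z\in M$ and every $t\in (0,1]$,
\begin{equation*}
|B(z,t^{1/2})|^{-s/n}\bigl|\omega(t^{1/2}\sqrt{\mathscr{L}})f(z)\bigr|
\leq \bigl(1+t^{-1/2}\rho(x,z)\bigr)^{a}\bigl[\omega(t^{1/2}\sqrt{\mathscr{L}})\bigr]^{\ast}_{a,-s/n}f(x).
\end{equation*}
When $(z,t)\in\Gamma^{loc}(x)$, i.e.\ $\rho(x,z)<t^{1/2}$, the factor $(1+t^{-1/2}\rho(x,z))^{a}$ is at most $2^{a}$. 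Raising to the $q$-th power and noting that the right-hand side is independent of $z$, I then get
\begin{equation*}
\iint_{\Gamma^{loc}(x)} |B(z,t^{1/2})|^{-sq/n}\bigl|\omega(t^{1/2}\sqrt{\mathscr{L}})f(z)\bigr|^{q}\,
\frac{d\mu(z)}{|B(x,t^{1/2})|}\frac{dt}{t}
\leq 2^{aq}\int_{0}^{1}\bigl|[\omega(t^{1/2}\sqrt{\mathscr{L}})]^{\ast}_{a,-s/n}f(x)\bigr|^{q}\frac{I(x,t)}{|B(x,t^{1/2})|}\frac{dt}{t},
\end{equation*}
where $I(x,t):=\mu(B(x,t^{1/2}))=|B(x,t^{1/2})|$, so the ratio is $1$.

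Thus the inner double integral is controlled pointwise by $2^{aq}\int_{0}^{1}\bigl|[\omega(t^{1/2}\sqrt{\mathscr{L}})]^{\ast}_{a,-s/n}f(x)\bigr|^{q}\,\frac{dt}{t}$ (with the obvious $\sup$ modification when $q=\infty$). Taking the $q$-th root and then the $L^{p}$ quasi-norm in $x$ on both sides yields \eqref{lu2} with implicit constant $2^{a}$.

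The argument contains no real obstacle: the only point to be mindful of is the bookkeeping of the scaling $t^{1/2}$ versus $t$ in the Peetre-type functional and the elementary fact $|B(x,t^{1/2})|=\mu(B(x,t^{1/2}))$ so that the normalizing factor $|B(x,t^{1/2})|^{-1}$ cancels the measure of the ball over which one integrates. The case $q=\infty$ is handled by replacing $\int_{0}^{1}\cdots dt/t$ by $\esssup_{0<t\leq 1}$ throughout; the pointwise inequality is identical.
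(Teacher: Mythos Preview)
Your proposal is correct and follows essentially the same approach as the paper: both arguments use the trivial pointwise bound that on the cone $\rho(x,z)<t^{1/2}$ the Peetre denominator $(1+t^{-1/2}\rho(x,z))^{a}$ is at most $2^{a}$, so the ball-average of $|B(z,t^{1/2})|^{-sq/n}|\omega(t^{1/2}\sqrt{\mathscr{L}})f(z)|^{q}$ is dominated by $2^{aq}\bigl|[\omega(t^{1/2}\sqrt{\mathscr{L}})]^{\ast}_{a,-s/n}f(x)\bigr|^{q}$, after which one integrates in $t$ and takes the $L^{p}$ norm. The paper phrases the first step as ``average $\leq$ sup $\leq 2^{aq}\cdot$ Peetre'' while you bound the integrand directly and then integrate, but the content is identical.
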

\begin{proof}
Observe that for all $a >0$, $t \in (0,1]$ and $x \in M$,
\begin{align*}
&\frac{1}{|B(x, t^{1/2})|} \int_{B(x,t^{1/2})}    |B(z,t^{1/2})|^{-sq/n}|\omega(t^{1/2}\sqrt{\mathscr{L}})f(z)|^{q}d\mu(z)  \\
&\quad\quad \leq \sup_{z \in B(x, t^{1/2})}
|B(z,t^{1/2})|^{-sq/n}|\omega(t^{1/2}\sqrt{\mathscr{L}})f(z)|^{q}
\leq 2^{aq} \left|\big[\omega(t^{1/2}\sqrt{\mathscr{L}}) \big]^{\ast}_{a, -s/n}f(x)\right|^{q}.
\end{align*}
Taking the norm $\int_{0}^{1}|\cdot| \frac{dt}{t}$ on both sides gives the pointwise estimate
\begin{align*}
\iint_{\Gamma^{loc}(x)} |B(z,t^{1/2})|^{-sq/n}|\omega(t^{1/2}\sqrt{\mathscr{L}})f(z)|^{q}
 \frac{d\mu(z)}{{|B(x, t^{1/2})|}} \frac{dt}{t}
 \leq 2^{aq} \int_{0}^{1}  \left|\big[\omega(t^{1/2}\sqrt{\mathscr{L}}) \big]^{\ast}_{a}f(x)\right|^{q} \frac{dt}{t},
\end{align*}
which readily yields the estimate \eqref{lu2}.
\end{proof}

Now we are in a position to complete the proof of Theorem \ref{area}.
\begin{proof}[Proof of Theorem \ref{area}]
Let $a > \frac{2n+2n'+1}{\min\{p,q\}}$. Then, using Theorem \ref{heat},  the obvious estimate
{\small $|B(x, t^{1/2})|^{-s/n} \big|\omega(t^{1/2}\sqrt{\mathscr{L}})f(x) \big|
\lesssim \big[ \omega(t^{1/2}\sqrt{\mathscr{L}})\big]^{\ast}_{a, -s/n}f (x)$},
Lemma \ref{0962}, Lemma \ref{0963} and \eqref{lkjh}, we have
{\small \begin{align*}
\|f\|_{\widetilde{F}^{s}_{p,q}(\mathscr{L})} &\sim  \big\||B(\cdot, 1)|^{-s/n}\omega_{0}(\sqrt{\mathscr{L}})f
(\cdot)\big\|_{L^{p}} + \left\|  \left( \int_{0}^{1}
|B(\cdot, t^{1/2})|^{-sq/n} \big|\omega(t^{1/2}\sqrt{\mathscr{L}})f(\cdot) \big|^{q}\frac{dt}{t}\right)^{1/q}\right\|_{L^{p}}\\
& \leq \big\||B(\cdot, 1)|^{-s/n}\omega_{0}(\sqrt{\mathscr{L}})f (\cdot)\big\|_{L^{p}} +\left\|  \left( \int_{0}^{1}
\left|\big[ \omega(t^{1/2}\sqrt{\mathscr{L}})\big]^{\ast}_{a, -s/n}f \right|^{q}\frac{dt}{t}\right)^{1/q}\right\|_{L^{p}} \\
& \lesssim \big\||B(\cdot, 1)|^{-s/n}\omega_{0}(\sqrt{\mathscr{L}})f (\cdot)\big\|_{L^{p}}\\
& \quad+\left\| \left(\iint_{\Gamma^{loc}(\cdot)}  |B(z,t^{1/2})|^{-sq/n}\big|\omega(t^{1/2}\sqrt{\mathscr{L}})f(z)\big|^{q}
 \frac{d\mu(z)}{{|B(\cdot, t^{1/2})|}} \frac{dt}{t}\right)^{1/q} \right\|_{L^{p}}\\
 & \lesssim \big\||B(\cdot, 1)|^{-s/n}\omega_{0}(\sqrt{\mathscr{L}})f(\cdot) \big\|_{L^{p}} +\left\|  \left( \int_{0}^{1}
\left|\big[ \omega(t^{1/2}\sqrt{\mathscr{L}})\big]^{\ast}_{a_{1}, -s/n}f \right|^{q}\frac{dt}{t}\right)^{1/q}\right\|_{L^{p}} \\
& \lesssim \big\||B(\cdot, 1)|^{-s/n}\omega_{0}(\sqrt{\mathscr{L}})f (\cdot)\big\|_{L^{p}}+ \left\|  \left( \int_{0}^{1}
|B(\cdot, t^{1/2})|^{-sq/n} \big|\omega(t^{1/2}\sqrt{\mathscr{L}})f(\cdot) \big|^{q}\frac{dt}{t}\right)^{1/q}\right\|_{L^{p}} \\
& \sim \|f\|_{\widetilde{F}^{s}_{p,q}(\mathscr{L})},
\end{align*}}
which yields \eqref{lusin2}. The proof of Theorem \ref{area} is thus complete.
\end{proof}

\medskip
\section*{Acknowledgments}
We sincerely thank the anonymous
referees for the careful reading of the manuscript and
several constructive suggestions which improved the exposition of this paper.
We also thank Tino Ullrich and Martin Sch\"{a}fer for inspiring discussions
on the estimate for the inhomogeneous term in Lemma \ref{moui}.

%\medskip
%\noindent
%{\bf Date availability}

%\noindent
%Date sharing not applicable to this article as no datasets were generated or analysed during the current study.

%\medskip
%\noindent{\bf Conflict of interest}

%\noindent
%The author declare that there is no competing interests.

\end{document}